\DeclareMathAlphabet{\pazocal}{OMS}{zplm}{m}{n}
\numberwithin{equation}{section}
\newcommand{\DDi}{\slashed{\DD}}
\newcommand{\Di}{\slashed{D}}
\newcommand{\hf}{{\widehat{f}}}
\newcommand{\R}{\mathbb{R}}
\newcommand{\N}{\mathbb{N}}
\newcommand{\p}{{\partial}}
\newcommand{\w}{\omega}
\newcommand{\dd}[2]{\dfrac{\partial #1}{\partial #2}}
\newcommand{\matrice}[1]{\left[ \begin{matrix}
#1
\end{matrix} \right]}
\newcommand{\ove}[1]{\overline{#1}}
\newcommand{\var}{{\vartheta}}
\newcommand{\bsu}{\boldsymbol{\sigma_1}}
\newcommand{\bss}{\boldsymbol{\sigma_\star}}
\newcommand{\bsd}{\boldsymbol{\sigma_2}}
\newcommand{\bst}{\boldsymbol{\sigma_3}}
\newcommand{\bsi}{\boldsymbol{\sigma_i}}
\newcommand{\bsj}{\boldsymbol{\sigma_j}}
\newcommand{\PP}{{\mathscr{P}}}
\newcommand{\DD}{{\pazocal{D}}}
\newcommand{\loc}{{\text{loc}}}
\newcommand{\epsi}{\varepsilon}
\newcommand{\trace}{{\operatorname{Tr}}} 
\newcommand{\dist}{{\operatorname{dist}}} 
\newcommand{\oA}{{\operatorname{(H1)}}} 
\newcommand{\oB}{{\operatorname{(H2)}}} 
\newcommand{\Tr}{{\operatorname{Tr}}}
\newcommand{\te}{\theta}
\newcommand{\lr}[1]{\langle #1 \rangle}
\newcommand{\blr}[1]{\left\langle #1 \right\rangle}
\newcommand{\tvartheta}{{\widetilde{\var}}}
\newcommand{\tm}{{\widetilde{m}}}
\newcommand{\WW}{\pazocal{W}}
\newcommand{\Bb}{\mathbb{B}}
\newcommand{\tL}{\widetilde{L}}
\newcommand{\Z}{\mathbb{Z}}
\newcommand{\BBb}{\mathbb{B}}
\newcommand{\sgn}{{\operatorname{sgn}}}
\newcommand{\Pp}{\mathbb{P}}
\newcommand{\Ss}{\mathbb{S}}
\newcommand{\Id}{{\operatorname{Id}}}
\newcommand{\BB}{\pazocal{B}}
\newcommand{\VV}{{\pazocal{V}}}
\newcommand{\EE}{\pazocal{E}}
\newcommand{\MM}{\pazocal{M}}
\newcommand{\UU}{{\pazocal{U}}}
\newcommand{\LL}{{\pazocal{L}}}
\newcommand{\HH}{\pazocal{H}}
\newcommand{\vp}{{\varphi}}
\newcommand{\hVV}{{\widehat{\VV}}}
\newcommand{\hWW}{{\widehat{\WW}}}
\newcommand{\VVV}{{\mathscr{V}}}
\newcommand{\WWW}{{\mathscr{W}}}
\newcommand{\systeme}[1]{\left\{ \begin{matrix} #1 \end{matrix} \right.}
\newcommand{\End}{{\operatorname{End}}}
\newcommand{\C}{\mathbb{C}}
\newcommand{\oz}{{\overline{z}}}
\newcommand{\az}{\alpha}
\newcommand{\OO}{{\pazocal{O}}}
\newcommand{\Sf}{{\operatorname{Sf}}}
\newcommand{\CCC}{{\mathscr{C}}}
\renewcommand{\Re}{\operatorname{Re}}
\renewcommand{\Im}{\operatorname{Im}}
\newcommand{\bblr}[1]{\big\langle #1 \big\rangle}
\newcommand{\Mm}{\mathbb{M}}
\newcommand{\ou}{\overline{u}}
\newcommand{\Tt}{{\mathbb{T}}}
\newcommand{\Ee}{{\mathbb{E}}}
\newcommand{\ev}{{\operatorname{e}}}
\newcommand{\od}{{\operatorname{o}}}
\newcommand{\tE}{{\widetilde{E}}}
\newcommand{\1}{\mathds{1}}
\newcommand{\de}{ \ \mathrel{\stackrel{\makebox[0pt]{\mbox{\normalfont\tiny def}}}{=}} \ }
\title{The bulk-edge correspondence for continuous dislocated systems}
\newtheorem{thm}{Theorem}
\newtheorem{conj}{Conjecture}
\newtheorem{cor}{Corollary}
\newtheorem{lem}{Lemma}[section]
\newtheorem{definition}[lem]{Definition}
\newtheorem{theorem}[thm]{Theorem}
\theoremstyle{definition}
\newtheorem{rmk}{Remark}[section]
\author{Alexis Drouot}
\begin{document}
\maketitle

\begin{abstract} We study topological aspects of defect modes for a family of operators $\{\PP(t)\}_{t \in [0,2\pi]}$ on $L^2(\R)$. $\PP(t)$ is a periodic Schr\"odinger operator $P_0$ perturbed by a \textit{dislocated} potential. This potential is periodic on the left and on the right, but acquires a phase defect $t$ from $-\infty$ relative to $+\infty$. When $t=\pi$ and the dislocation is \textit{small and adiabatic,} Fefferman, Lee-Thorp and Weinstein \cite{FLTW1,FLTW2} showed that Dirac points of $P_0$ (degeneracies in the band spectrum of $P_0$) bifurcate to defect modes of $\PP(\pi)$.

We show that these modes are \textit{topologically protected} at the level of the family $\{\PP(t)\}_{t \in [0,2\pi]}$. This means that local perturbations cannot remove these states for all values of $t$, \textit{even outside the small adiabatic regime} of \cite{FLTW1,FLTW2}. We define two topological quantities: an edge index (the signed number of eigenvalues crossing an energy gap as $t$ runs from $0$ to $2\pi$) and a bulk index (the Chern number of a Bloch eigenbundle for the periodic operator near $+\infty$). We prove that these indexes are equal to the same \textit{odd} winding number. This shows the bulk-edge correspondence for the family $\{\PP(t)\}_{t \in [0,2\pi]}$. We express this winding number in terms of the asymptotic shape of the dislocation and of the Dirac point Bloch modes. We illustrate the topological depth of our model via the computation of the bulk/edge index on a few examples. 
\end{abstract}

\section{Introduction}

The study of waves in crystalline materials has a long history. Central questions include energy transport, localization, inverse problems, scattering by impurities, etc. An important theme concerns whether two different crystals  can be deformed into one another while preserving their electronic properties. This amounts to topologically classifying periodic materials. This classification was first believed to depend solely on the band spectrum of the crystals. But the scientific community soon realized that the Bloch modes encoded essential information. This led to theoretical explanations of the quantum Hall effect \cite{Hal,TKN,Hat}; discoveries of topological insulators \cite{KM1,KM2,FKM,MB,Roy}; and topological interpretations of certain oceanic and atmospheric waves \cite{DMV,PDV}. 

Highly symmetric crystals are central to the field of topological insulators. They exhibit degeneracies in their band spectrum. Symmetry-breaking perturbations may remove these degeneracies and open essential spectral gaps. When this happens, metals become insulators. The existence of energy gaps allows to define low-energy eigenbundles. Their topology depend on their Chern classes. For two-dimensional materials, there is a single Chern class, represented by an integer: the bulk index. 

In favorable conditions, certain half-crystals are insulating in the bulk but conducting along their boundary. In other words, they support time-harmonic waves propagating along rather than across the edge. According to numerical and physical experiments, this property is strikingly robust: it persists under large deformations of the interface. This inspired investigations in electronic physics, photonics, acoustics and mechanics; see e.g. \cite{KMY,YVW,WCJ,SG11,RZP,I15,M17,O18}.
 From the theoretical point of view, this motivated the definition of the edge index: the difference between the number of edge states propagating up and down. 

The bulk-edge correspondence is a formal principle that asserts that the bulk and edge indexes coincide. Such identities are ubiquitous in mathematics and physics: they relate spectral quantities (here, the edge index) to topological quantities (the bulk index). One of the most famous example is the Atiyah--Patodi--Singer index theorem \cite{APS1,APS2,APS}. The bulk-edge correspondence has been proved in a growing number of discrete settings \cite{EGS,ASV,GP,PS,Ba,Sha,Br,GS,GT,ST}; see \cite{FC,AOP} for a good introduction. There is fewer work on continuous systems: see \cite{KS,KS2,Taa} for the quantum Hall effect; \cite{FSF,Ba,Ba2} for an analysis on Dirac operators; and \cite{BR} for a K-theory approach. Here, we prove the bulk-edge correspondence in a new continuous setting.

The system models the dislocation of a one-dimensional crystal; equivalently, the imperfect junction of two similar crystals at a phase defect $t \in [0,2\pi]$. For small and adiabatic dislocations, Fefferman, Lee-Thorp and Weinstein \cite{FLTW1,FLTW2} constructed defect states via a rigorous multiscale procedure. These defect states bifurcate from Dirac points (conical degeneracies in the band spectrum) of the periodic background. We recently showed that the multiscale  approach of \cite{FLTW1,FLTW2} produces all defect states \cite{DFW}. 

Using \cite{DFW}, we derive a formula for the edge index. It depends only on the bulk: it is a winding number expressed in terms of the asymptotic shape of the dislocation and of the Dirac point Bloch modes of the periodic background. It shows that Dirac points play a key role in the production of defect states.  Because of topological invariance, our formula gives the edge index \textit{regardless of small or adiabatic assumptions.}  

We then use the classical expression of the bulk index as the integral of the Berry curvature. The same winding number emerges via the rigorous reduction to a tight-binding model. This proves the bulk-edge correspondence for our model. It extends and generalizes parts of results of Korotyaev \cite{Ko1,Ko2}, Dohnal--Plum--Reichel \cite{DPR}  and Hempel--Kohlmann \cite{HK}.  We compute the bulk/edge index on a few examples. This demonstrates that the bulk/edge index of our system can take any odd value.

Our model shares many features with honeycomb lattices glued along an edge. Fefferman, Lee-Thorp, Weinstein and Zhu successfully exploited this connection to construct edge states in the small adiabatic regime \cite{FLTW3,FLTW4,LWZ}. Their work demonstrates mathematically some of the predictions of Haldane and Raghu \cite{HR1,RH}. The work of Hempel and Kohlmann \cite{HK} inspired studies for two-dimensional dislocations \cite{HK2,HK4,HK3,KM}. Our techniques will likewise apply to more advanced models.

\subsection{Description of the model and motivation}\label{sec:1.1}
We study a family of Schr\"odinger operators $\{\PP(t)\}_{t \in [0,2\pi]}$ on $L^2(\R)$ with potentials periodic on the far left and the far right, but acquiring a phase defect $t$ when going from $+\infty$ to $-\infty$. Specifically:
\begin{equation}\label{eq:1l}
\PP(t) \de -\dd{^2}{x^2}  + V + \chi_-   W + \chi_+  W_t + F, \ \ \ \ \ \ \text{where:}
\end{equation}
\begin{itemize}
\item $V$ and $W$ are smooth, real-valued, one-periodic and satisfy
\begin{equation}\label{eq:9v}
V(\cdot+1/2) = V(x); \ \ \ \ W(x+1/2) = -W(x); \ \ \ \ W_t(x) \de W\big( x + t/(2\pi) \big).
\end{equation}
\item $\chi_\pm$ are smooth and real-valued and satisfy
\begin{equation}
\chi_+ + \chi_- = 1, \ \ \ \ \chi_+(x) = \systeme{1 & \text{ for } x \gg 1 \ \ \ \\ 0 & \text{for } x \ll -1}, \ \ \ \ \chi_-(x) = \systeme{1 & \text{for } x \ll -1 \\ 0 & \text{ for } x \gg 1 \ \ \ }.
\end{equation}
\item $F$ is a selfadjoint operator on $L^2(\R)$, compact from $H^2(\R)$ to $L^2(\R)$.
\end{itemize}
The operator $\PP(t)$ is the dislocation of a periodic Schr\"odinger operator $P_0 \de D_x^2 + V$ by a potential that looks like $W$ for $x \ll -1$ and $W_t$ for $x \gg 1$. See Figure \ref{fig:3}.

The work of Fefferman, Lee-Thorp and Weinstein \cite{FLTW1,FLTW2} motivates our analysis. These papers study defect modes for 
\begin{equation}\label{eq:3t}
\PP_{\delta,\star} \de D_x^2 + V - \delta \cdot \kappa(\delta \ \cdot) \cdot W, \ \ \ \ D_x \de \dfrac{1}{i} \dd{}{x}, \ \ \ \ \kappa \de \chi_+-\chi_-.
\end{equation} 
This corresponds to \eqref{eq:1l} when $F \equiv 0$, $t=\pi$, $\chi_+$, $\chi_-$ vary at scale $\delta^{-1} \gg 1$ and $W = O(\delta)$. The operator \eqref{eq:3t} is a small non-local perturbation of $P_0 = D_x^2 + V$. As a periodic operator, $P_0$ acts on the space of quasi-periodic functions
\begin{equation}\label{eq:8a}
L^2_\xi \de \left\{ u\in L^2_\loc : u(x+1) = e^{i\xi} u(x) \right\}.
\end{equation}
These are Hilbert spaces when provided with the bilinear form $\blr{f,g}  \de \int_0^1 \ove{f(x)} g(x) dx$. Because of \eqref{eq:9v}, the $L^2_\pi$-eigenvalues of $P_0$ are two-fold degenerate. They correspond to Dirac points: conical intersections of the $L^2_\xi$-eigenvalues of $P_0$. If $(\pi,E_\star)$ is a Dirac point, there exist normalized solutions $\phi_\pm^\star \in L^2_\pi$ of the eigenvalue problems
\begin{equation}
\left(D_x^2+V-E_\star\right) \phi_\pm^\star = 0, \ \ \ \ \phi_\pm^\star(x+1/2) = \pm i \phi_\pm(x).
\end{equation}
These solutions are uniquely defined modulo multiplicative factors in $\Ss^1$. We call $(\phi_+^\star,\phi_-^\star)$ a ``Dirac eigenbasis". We refer to \S\ref{sec:3.0} for details.

\begin{center}
\begin{figure}
{\caption{The periodic potentials $V$ (red) and $W$ (blue) are plotted in the top graph; on the right, $W$ acquires a phase defect $t$. The bottom graph represents the typical potential in the Schr\"odinger operator $\PP(t)$: it is periodic on the left and on the right, with different asymptotics. The gray transition zone takes an arbitrary form.}\label{fig:3}}
{\includegraphics{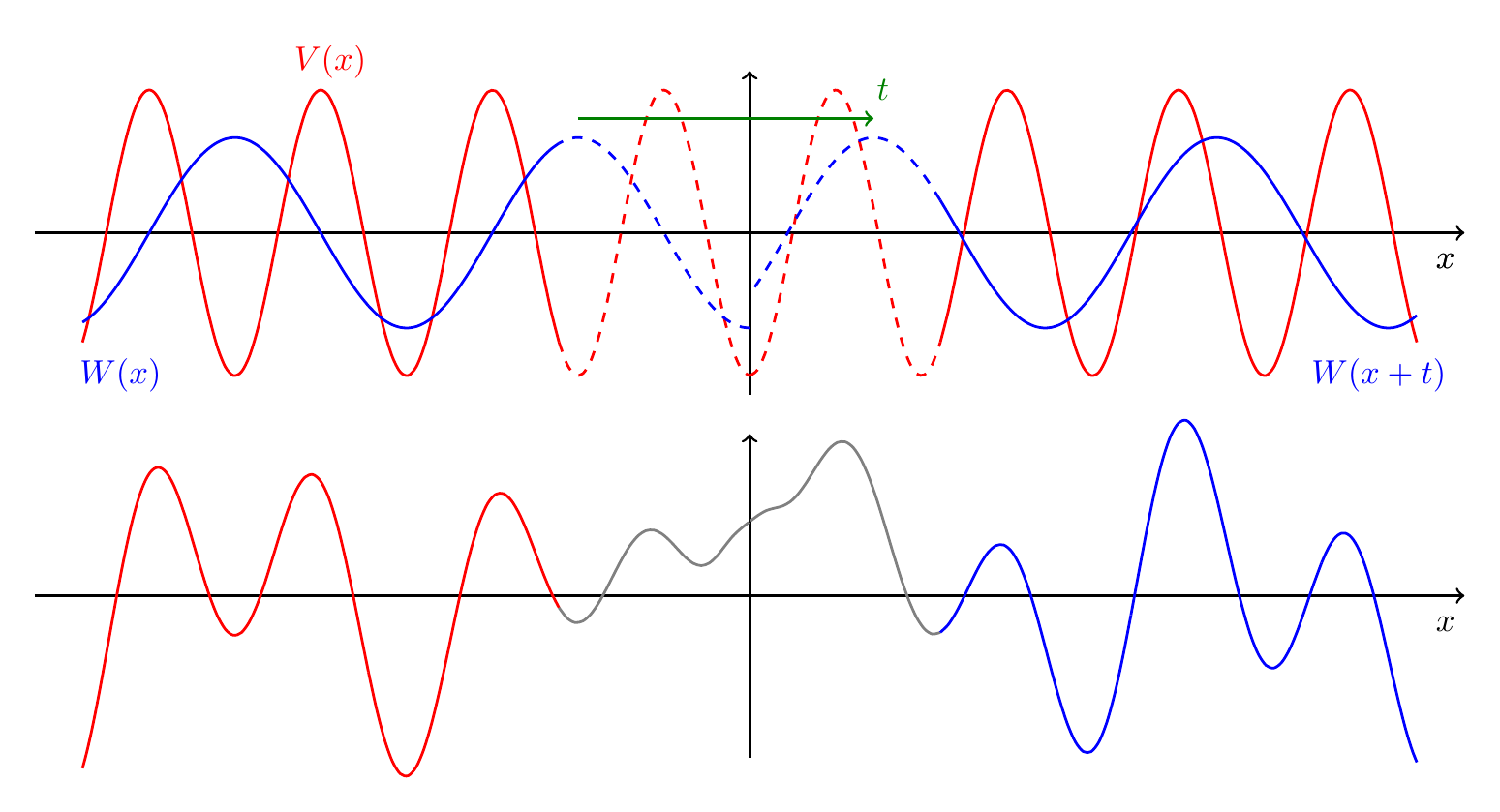}}
\end{figure}
\end{center}    
\vspace*{-.9cm}

Introduce the parameter
\begin{equation}
\var_\star \de \blr{\phi_-^\star, W_\pi \phi_+^\star}  = \int_0^1 \overline{\phi_+^\star(x)} W\big( x + 1/2 \big) \phi_-^\star(x) dx.
\end{equation}
When $\var_\star \neq 0$ and $\delta > 0$ is sufficiently small, the operator $\PP_{\delta,\star}$ has an essential spectral gap centered at the Dirac energy $E_\star$, of size $2 \var_F \delta + O(\delta^2)$, where $\var_F = |\var_\star|$. The work \cite{FLTW2} predicts that $\PP_{\delta,\star}$ admits at least one eigenvalue in each of these gaps, equal to $E_\star + O(\delta^2)$. The corresponding eigenvector is seeded by the zero mode of the Dirac operator
\begin{equation}
\DDi_\star \de \nu_\star \bst  D_x + \bss  \kappa, \ \ \ \ \nu_\star \de 2\blr{\phi_+^\star,D_x\phi_+^\star}, \ \ \ \ \bss \de \matrice{0 & \ove{\var_\star} \ \\ \var_\star & 0}, \ \ \bst \de \matrice{1 & 0 \\ 0 & -1}.
\end{equation}
The operator $\DDi_\star$ emerges via a multiscale analysis of \eqref{eq:3t}.
We improve this result in \cite{DFW}: we show that the eigenvalues of $\DDi_\star$ seed precisely \textit{all} the eigenvalues of $\PP_{\delta,\star}$. See \S\ref{sec:3.3} for precise statements and Figure \ref{fig:2} for a pictorial representation.

The zero mode of the Dirac operator $\DDi_\star$ persists against large compact perturbations of $\kappa$. A natural question is whether this robustness property transfers to the defect states constructed in \cite{FLTW2} -- in other words, whether these modes persist against large deformations.  The short answer is no: one could deform these modes away by adding a suitable finite-rank operator. 

A less naive answer consists in \textit{embedding} $\PP_{\delta,\star}$ in a family of dislocation problems of the form \eqref{eq:1l}. This family is $2\pi$-periodic with respect to $t$ and topological effects appear. We show that under suitable assumptions, the family $t \in [0,2\pi]\mapsto \PP(t)$ admits defect states independently of the perturbation $F$. This means that there are always phase shifts $t$ such that $\PP(t)$ admits eigenvalues. In particular, the corresponding modes persist against arbitrary bulk-preserving perturbations.

\subsection{Existence of defect states} Fix $n$ an odd integer. Because $V$ is $1/2$-periodic, the $n$-th and $n+1$-th dispersion curves of $D_x^2 + V$ join at a Dirac point $(\pi,E_\star)$, see the above discussion and \S\ref{sec:3.0}. In addition to \eqref{eq:9v}, we introduce two assumptions for the pair of potentials $(V,W)$:
\begin{enumerate}
\item[$\oA$] There exists $E \in C^0\big( [0,1] \times \R/(2\pi \Z), \R \big)$ with
$E(s,t) = E_\star$ for $s$ small and 
\begin{equation}
 \ \ \ s \in (0,1], \ t \in \R \ \Rightarrow \ 
E(s,t) \text{ is not in the } L^2_\pi\text{-spectrum of } D_x^2 + V + s W_t.
\end{equation}
\item[$\oB$] For every $t \in \R$, $\var(t) \de \blr{\phi_-^\star, W_t \phi_+^\star}$ is non-zero.
\end{enumerate}
These conditions concern only the bulk structure of the problem; if they hold for $(V,W)$ they also hold for small perturbations of $(V,W)$. $\oA$ means that as $s,t$ vary, the $n$-th gap of $D_x^2 + V + s W_t$ remains open; $\oB$ is a non-degeneracy condition that already appeared in \cite{FLTW1,FLTW2,DFW}. In \S\ref{sec:8}, we provide examples of pairs $(V,W)$ where both $\oA$ and $\oB$ hold.  $\oB$ allows to define the degree of $\var : \R \rightarrow \C^*$:
\begin{equation}\label{eq:1z}
m \de \dfrac{1}{2\pi i} \int_0^{2\pi} \dfrac{\var'(t)}{\var(t)} dt.
\end{equation}
Equivalently, $m$ is the winding number of $t \in \Tt^1 \mapsto \var(t) \in \C^*$ around $0$. Since $\var(t+\pi) = -\var(t)$, $m$ is odd (see \S\ref{sec:2.4}); in particular $m \neq 0$. A simple version of our main results predicts existence of defect states for $\PP(t)$.

\begin{cor}\label{cor:1} If $(V,W)$ satisfies \eqref{eq:9v}, $\oA$ and $\oB$ then there exist at least $|m|$ values of $t \in [0,2\pi]$ such that $E(1,t)$ is an eigenvalue of $\PP(t)$ -- counted with multiplicity.
\end{cor}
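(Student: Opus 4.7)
The plan is to deduce the corollary as a direct consequence of the paper's main bulk--edge theorem, which identifies the edge index of the family $\{\PP(t)\}_{t \in [0,2\pi]}$ with the winding number $m$ defined in \eqref{eq:1z}. Hypothesis $\oA$ provides the essential spectral gap required to define the edge index and guarantees that the reference curve $t \mapsto E(1,t)$ lies inside this gap; hypothesis $\oB$ ensures that $m$ is well-defined, and the antiperiodicity $\var(t+\pi)=-\var(t)$ then forces $m$ to be a nonzero odd integer.

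First I would set up the analytic framework in which the edge index makes sense. Under $\oA$, for each $t \in [0,2\pi]$ the asymptotic periodic operators $D_x^2+V+W$ and $D_x^2+V+W_t$ share an $L^2(\R)$ spectral gap containing $E(1,t)$. Since $F$ is relatively compact and $\PP(t)$ differs from the correct asymptotic operator by a compactly supported potential at the corresponding end, Weyl's theorem shows that $\PP(t)$ has the same essential spectrum and therefore the same gap. Inside this gap the spectrum of $\PP(t)$ consists at most of isolated eigenvalues of finite multiplicity, and analytic perturbation theory in $t$ organizes them into continuous labelled branches.

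Next I would invoke the bulk--edge theorem: with the standard orientation convention, the signed count of these branches crossing the curve $\{(t,E(1,t))\}$ as $t$ traverses $[0,2\pi]$, counted with multiplicity, equals $m$. Writing $N_+$ and $N_-$ for the total numbers of up- and down-crossings, this gives $N_+-N_-=m$, whence $N_++N_- \geq |m|$. Every crossing is a value of $t$ at which $E(1,t)$ is an eigenvalue of $\PP(t)$, its contribution to $N_\pm$ being the multiplicity of that eigenvalue; summing therefore produces at least $|m|$ such values of $t$, counted with multiplicity.

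The genuinely substantive step is the identification of the edge index with $m$. This is the content of the paper's main theorem and proceeds via the tight-binding reduction of \cite{DFW} to the Dirac operator $\DDi_\star$, whose spectral flow can be read off directly from the phase variation of $\var$, and which matches the Chern number of a Bloch eigenbundle through the bulk--edge correspondence. Modulo that theorem, Corollary~\ref{cor:1} is a soft topological consequence: the modulus of a signed count always lower-bounds the unsigned count.
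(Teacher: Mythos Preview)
Your approach is correct and matches the paper's: Corollary~\ref{cor:1} is stated there as a direct consequence of Theorem~\ref{thm:1}, and you have spelled out precisely the standard argument that the unsigned number of crossings of $t\mapsto E(1,t)$ by eigenvalue branches is bounded below by the absolute value of the spectral flow. One small slip: Theorem~\ref{thm:1} gives $\Sf_{\Tt^1}(\PP-E(1,\cdot))=(-1)^{(n-1)/2}\,m$, not $m$, so your equation should read $N_+-N_-=(-1)^{(n-1)/2}m$; since you only use $|N_+-N_-|=|m|$, this does not affect the conclusion.
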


\subsection{Bulk-edge correspondence} We describe our main results. We start by defining indexes. The edge index is the spectral flow $\Sf_{\Tt^1}(\PP-E(1,\cdot))$, where $\Tt^1$ denotes the torus $\R/(2\pi\Z)$. Roughly speaking,  it is the spectral multiplicity that effectively crosses the gap \textit{downwards}\footnote{Different authors picked different conventions. For instance, in \cite{APS,P,GLM} the spectral flow is the \textit{upwards} net crossing of eigenvalues. We chose the ``downwards" convention because it has been previously used in the mathematical litterature about bulk-edge correspondence \cite{Br} and dislocation problems \cite{HK}, both of which have strong connections with the present work.} as $t$ runs from $0$ to $2\pi$; see \S\ref{sec:4.0} for a precise definition. Corollary \ref{cor:1} is a direct consequence~of:

\begin{center}
\begin{figure}
{\caption{This picture represents the spectrum of $\PP(t)$ as a function of $t$. The gray zone corresponds to the essential spectrum; eigenvalues are plotted in red; $t \mapsto E(1,t)$ is an arbitrary energy level in the gap. The spectral flow counts the signed number of downwards intersections between the red and blue curves. Here it equals $1$: the first crossing occurs downwards; the upper curve does not intersect $E(1,t)$; and the lower curve intersections cancel each other. A bulk-preserving perturbation must leave the spectral flow invariant: the first red curve cannot be deformed away.}\label{fig:1}}
{\includegraphics{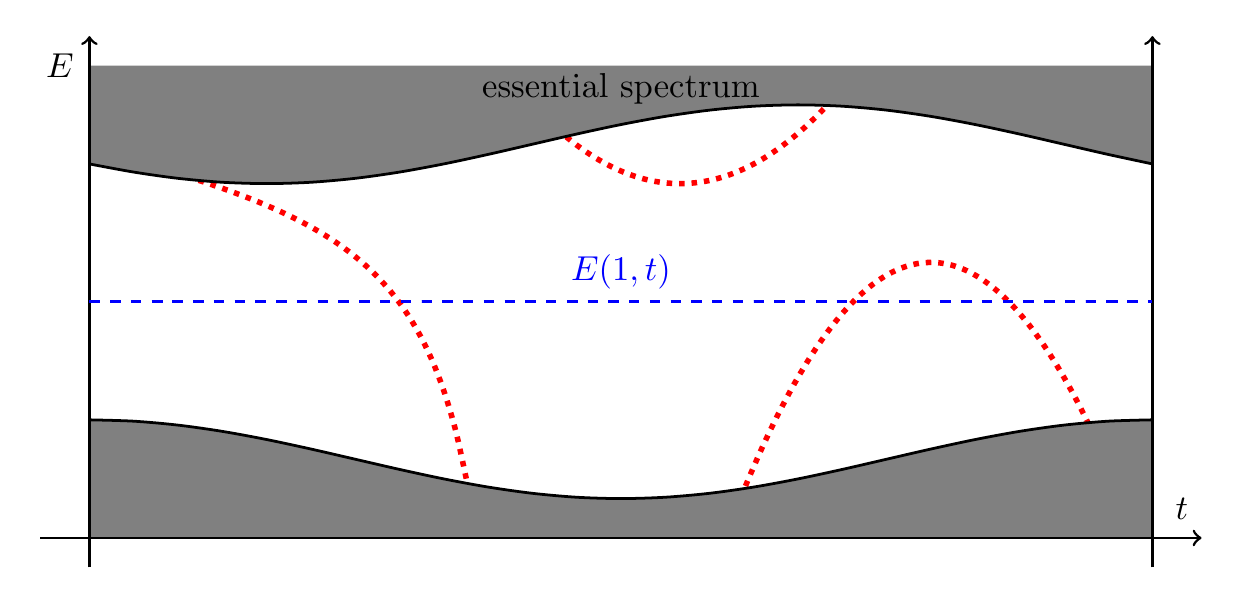}}\end{figure}
\end{center}
\vspace*{-.7cm}

\begin{theorem}\label{thm:1} If the pair $(V,W)$ satisfies \eqref{eq:9v}, $\oA$ and $\oB$ then
\begin{equation}
\Sf_{\Tt^1}\big(\PP-E(1,\cdot)\big) = (-1)^{\frac{n-1}{2}} \cdot m.
\end{equation}
\end{theorem}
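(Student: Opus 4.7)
My approach combines the homotopy invariance of the spectral flow on $\Tt^1$ with the reduction of $\PP(t)$ to an effective Dirac operator, following \cite{DFW} (recalled in \S\ref{sec:3.3}).

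First I would construct a continuous two-parameter family $\{\PP(s,t)\}_{(s,t)\in[0,1]\times\Tt^1}$ with $\PP(1,t)=\PP(t)$ that deforms, at $s=0$, to an ``adiabatic small-dislocation'' operator of the form covered by \cite{FLTW1,FLTW2,DFW}: shrink $F$ to zero, scale the amplitude of $W$ by $\delta\to 0$, and slow $\chi_\pm$ to scale $\delta^{-1}$. The spectral flow on $\Tt^1$ is preserved along such a homotopy provided the reference energy $E(s,t)$ stays clear of the essential spectrum of $\PP(s,t)$. By Weyl's theorem, the latter equals the union of the $L^2(\R)$-spectra of $D_x^2+V+sW$ and $D_x^2+V+sW_t$; assumption $\oA$ is designed precisely so that $E(s,t)$ lies in the $\xi=\pi$ gap of both throughout the homotopy, so the deformation is admissible. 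The computation then reduces to $\Sf_{\Tt^1}(\PP_{\delta,\cdot}-E(\delta,\cdot))$ for $\delta$ small.

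Next I would invoke \cite{DFW} to identify, for each fixed $t$, the eigenvalues of $\PP_{\delta,t}$ in the essential gap near $E_\star$ with those of a selfadjoint Dirac operator $\DDi(t)=\nu_\star\bst D_x+M(\cdot,t)$ on $L^2(\R;\C^2)$, where the mass $M(x,t)$ interpolates between the fixed left value $\matrice{0 & \ove{\var(0)} \\ \var(0) & 0}$ for $x\ll 0$ and the $t$-varying right value $\matrice{0 & \ove{\var(t)} \\ \var(t) & 0}$ for $x\gg 0$, reflecting the left potential $V+W$ and right potential $V+W_t$. Continuity of this reduction in $t$ upgrades the spectral bijection to an identification of spectral flows. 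Hypothesis $\oB$ ensures $M(\pm\infty,t)$ is invertible for every $t$, so the bound states of $\DDi(t)$ form a finite-rank bundle over $\Tt^1$. Its net spectral flow is a topological invariant of the loop $t\mapsto M(+\infty,t)$ in the space of invertible off-antidiagonal matrices $\simeq\C^*$; this loop has winding number $m$, and an index-theoretic argument on the suspension $\R\times\Tt^1$ (or a direct scattering computation of the bound state energies) yields $\Sf_{\Tt^1}(\DDi)=\sgn(\nu_\star)\cdot m$.

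It remains to identify $\sgn(\nu_\star)=(-1)^{(n-1)/2}$: the real scalar $\nu_\star=2\blr{\phi_+^\star,D_x\phi_+^\star}$ is the group velocity at the $n$-th Dirac point of the $1/2$-periodic operator $D_x^2+V$, and a standard Floquet-discriminant computation shows successive Dirac points carry group velocities of opposite sign. The main obstacle is the passage from the spectral bijection at each fixed $t$ to an identification of spectral flows on $\Tt^1$: \cite{DFW} supplies the former, but the latter requires uniform-in-$t$ continuity of the Lyapunov--Schmidt/Grushin machinery so that continuous motions of Schr\"odinger eigenvalues through $E(\delta,t)$ lift to continuous motions of Dirac eigenvalues through the rescaled reference. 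Secondary technical challenges are engineering the homotopy of the first step so that $\oA$ remains valid along the entire deformation path, and tracking signs carefully through the Dirac reduction.
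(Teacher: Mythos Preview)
Your proposal is correct and follows essentially the same strategy as the paper: homotope to the small adiabatic regime using $\oA$, invoke the \cite{DFW} bijection between Schr\"odinger and Dirac eigenvalues, and compute the Dirac spectral flow as $\sgn(\nu_\star)\cdot m$ with $\sgn(\nu_\star)=(-1)^{(n-1)/2}$. You have also correctly pinpointed the genuine technical step---upgrading the fixed-$t$ eigenvalue bijection to a spectral-flow equality on $\Tt^1$---which the paper handles by a careful compactness/covering argument (its Lemma~\ref{prop:1}); the only cosmetic difference is that the paper computes $\Sf_{\Tt^1}(\DDi)$ by homotoping $\var(t)$ to $-\var_\star e^{imt}$, reducing to $m$ copies of a model operator $\Di_1$, and solving $\Di_1$ explicitly with sharp cutoffs, rather than invoking a suspension/index argument.
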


We now define the bulk index. As a periodic operator, $D_x^2 + V + W_t$ acts on $L^2_\xi$ -- the space of quasi-periodic functions \eqref{eq:8a}. Because of (H1), the $n$-th gap of $D_x^2 + V + W_t$ on $L^2_\xi$ is open. We can then define a smooth rank-$n$ bundle $\EE$ over the two-torus $\Tt^2 = \R^2/(2\pi\Z)^2$: the fibers at $(\xi,t) \in \Tt^2$ is the direct sum of the first $n$ eigenspaces of $D_x^2 + V + W_t$ on $L^2_\xi$. The bulk index is the first Chern class $c_1(\EE)$ of $\EE$.

\begin{theorem}\label{thm:2} If the pair $(V,W)$ satisfies \eqref{eq:9v}, $\oA$ and $\oB$ then
\begin{equation}
c_1(\EE) = (-1)^{\frac{n-1}{2}}\cdot m.
\end{equation}
\end{theorem}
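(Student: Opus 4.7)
The approach is to deform $\EE$ via homotopy invariance to a small-coupling bundle to which the Dirac reduction of \cite{DFW} applies, then recognize the resulting Chern class as the winding number of $\var$.

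First, by $\oA$ the $n$-th gap of $D_x^2 + V + s W_t$ on $L^2_\xi$ remains open for every $(s,\xi,t) \in (0,1] \times \Tt^2$: continuity in $\xi$ lifts the $L^2_\pi$ hypothesis to nearby quasi-momenta, and at interior $\xi$ the $n$-th and $(n+1)$-th dispersion curves of $D_x^2 + V$ are already separated, so remain separated under the small perturbation $sW_t$. This defines a smooth family of rank-$n$ bundles $\EE_s$ over $\Tt^2$ with $\EE_1 = \EE$, and homotopy invariance gives $c_1(\EE) = c_1(\EE_s)$ for every $s \in (0,1]$. I then work in the small-$s$ regime.

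Second, I would split $\EE_s$ into the first $n-1$ bands and the $n$-th band. At $s = 0$ the lower $n-1$ eigenspaces depend only on $\xi$, so the rank-$(n-1)$ sub-bundle is pulled back from $\Tt^1_\xi$ and has vanishing $c_1$ (every complex bundle over $S^1$ is trivial); continuity propagates this to small $s > 0$. Hence $c_1(\EE_s) = c_1(L_s)$ for the line bundle $L_s$ associated with the $n$-th band. At $s = 0$ the $n$-th eigenspace is also $t$-independent away from $\xi = \pi$, so the Berry curvature of $L_s$ concentrates for small $s$ in a tube $(\pi - \delta, \pi + \delta) \times \Tt^1$ around the Dirac cone. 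Inside that tube, the analysis of \cite{DFW} identifies the $n$-th band with the lower band of
\begin{equation*}
H_{\rm eff}(\xi,t) \de \nu_\star (\xi - \pi)\bst + s \matrice{0 & \ove{\var(t)} \\ \var(t) & 0}
\end{equation*}
expressed in the Dirac eigenbasis $(\phi_+^\star,\phi_-^\star)$. Since $\var(t) \neq 0$ by $\oB$, $H_{\rm eff}$ is invertible throughout the tube and its lower eigenline extends trivially to $\Tt^2$.

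Third, writing $H_{\rm eff} = \vec h \cdot \vec\sigma$ with $\vec h = (s\Re\var(t), -s\Im\var(t), \nu_\star(\xi-\pi))$, the standard Chern-number formula computes $c_1(L_s)$ as the degree of the Gauss map $\hat h : \Tt^2 \to S^2$. On $\xi = \pi + \delta$ the image is a small loop near the pole of sign $\sgn\nu_\star$ that winds $m$ times around it, by \eqref{eq:1z}; on $\xi = \pi - \delta$ a symmetric winding occurs near the opposite pole. A capping argument against the trivial extension outside the tube then yields $c_1(L_s) = \sgn(\nu_\star)\cdot m$.

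Finally, it remains to show $\sgn(\nu_\star) = (-1)^{(n-1)/2}$ at the $n$-th Dirac point. This is a direct computation from $\nu_\star = 2\blr{\phi_+^\star, D_x\phi_+^\star}$, achieved by tracking the alternation of the Fermi velocity across successive Dirac points of $D_x^2 + V$ using the anti-periodicity \eqref{eq:9v} and the description of the Dirac eigenbases in \S\ref{sec:3.0}. \textbf{The main obstacles} are the rigorous localization of the Berry curvature in the second paragraph (which requires quantitative resolvent bounds of \cite{DFW} to control the curvature outside the tube uniformly in $s$) and the bookkeeping of signs in the Gauss-map degree calculation; both are technical but tractable.
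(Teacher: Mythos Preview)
Your plan mirrors the paper's proof in \S\ref{sec:5}: deform to small coupling, localize the Berry curvature near $\xi=\pi$, reduce to the $2\times 2$ model (the paper's $\Mm_\delta$), and compute; the paper integrates the explicit Berry curvature of that model (Lemma~\ref{lem:2c}) where you invoke the Gauss-map degree, but these are equivalent. One caution on your second step: the \emph{global} splitting of $\EE_s$ into the first $n-1$ bands and the $n$-th line bundle $L_s$ need not exist, since nothing in $\oA$ or $\oB$ prevents bands $n-1$ and $n$ from touching at $\xi=0$ (recall $n-1$ is even, cf.\ \eqref{eq:4a}), so $L_s$ may not be a smooth sub-bundle over all of $\Tt^2$. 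The paper sidesteps this by splitting only in a neighborhood of $\xi=\pi$ (Step~1 in the proof of Lemma~\ref{lem:2b}), where the $(n{-}1)$-th gap is automatically open, and by showing directly that the full curvature $B_\delta$ of $\EE_\delta$ is $O(\delta)$ away from $\xi=\pi$ (Lemma~\ref{lem:1d}); no global line-bundle argument is needed. The identity $\sgn(\nu_\star)=(-1)^{(n-1)/2}$ you need at the end is \eqref{eq:9t}, proved via the monotonicity of the even/odd dispersion branches in Lemma~\ref{lem:2m}.
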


Theorem \ref{thm:1} and \ref{thm:2} give the bulk-edge correspondence: the bulk prescribes the signed number of defect states of the family $\{\PP(t)\}_{t \in [0,2\pi]}$. Furthermore, they express the bulk and edge indexes in terms of $W$, $n$ and $\phi_\pm^\star$ only:
\begin{equation}
\Sf_{\Tt^1}\big(\PP-E(1,\cdot)\big) = c_1(\EE) =  \dfrac{(-1)^{\frac{n-1}{2}}}{2\pi i} \int_0^{2\pi} \dfrac{\var'(t)}{\var(t)} dt.
\end{equation}
This formula indicates that the existence of defect states depends exclusively on the rough shapes of the perturbation $W$ and of the Dirac eigenbasis $(\phi_+^\star,\phi_-^\star)$ -- rather than on the details of $V$. This is another form of the topological manifestation identified here.

\subsection{Relation with \cite{FLTW1,FLTW2,DFW}} Theorem \ref{thm:1} is -- in nature -- a topological result rather than an asymptotic result: no smallness of parameters is assumed. It predicts the existence of defect states for a much wider class of operators than those considered in \cite{FLTW1,FLTW2,DFW}. However, it neither specifies their exact number, their shape, nor which values of $t$ guarantee their existence. This is in sharp contrast with \cite{FLTW1,FLTW2,DFW}. The first two papers describe defect states of $\PP_{\delta,\star}$ as two-scale functions oscillating like $(\phi_+^\star,\phi_-^\star)$, with slow decay prescribed by eigenvectors of $\DDi_\star$. The third paper refines this description with the precise counting of defect states; and full expansions of eigenpairs in powers of $\delta$. Both these results are asymptotic: they work for small $\delta$. The strength of Theorems \ref{thm:1} and \ref{thm:2} is their robustness. These results identify defect modes that persist against \textit{all} bulk-preserving perturbations, for $\delta$ of order $1$. Their proofs contain topological and analytical steps: homotopy arguments and reduction to Dirac operators. The present paper justifies the ``topologically protected" terminology used in \cite{FLTW2}. 

Our one-dimensional model shares many features with higher-dimensional analogs. This connection was succesfully exploited in \cite{FLTW3,FLTW4,LWZ} to construct edge states in continuous honeycomb structures. We will apply the present analysis to show existence of topological edge states in honeycomb lattices.

\subsection{Proof of Theorem \ref{thm:1}} The proof of Theorem \ref{thm:1} consists of three mains steps:
\begin{itemize}
\item In \S\ref{sec:2}, we deform continuously the family $t \mapsto \PP(t)-E(t,1)$ to
\begin{equation}\label{eq:5a}
\PP_\delta(t) \de D_x^2 + V + \delta \chi_-(\delta \ \cdot) W + \delta \chi_+(\delta \ \cdot) W_t  - E_\star,
\end{equation}
where $\delta > 0$ is arbitrarily small. This preserves the spectral flow associated to the $n$-th gap because of assumption $\oA$.
\item The family \eqref{eq:5a} is spectrally connected to effective Dirac operators
\begin{equations}
t \in \Tt^1 \mapsto \DDi(t) = \nu_\star\bst  D_x - \bss  \chi_- + \matrice{0 & \ove{\var(t)} \ \\ \var(t) & 0} \chi_+ \ : \ H^1(\R) \rightarrow  L^2(\R),\\
\nu_\star \de 2 \blr{\phi_+^\star,D_x\phi_+^\star}, \ \ \ \ \var(t) \de \blr{\phi_+^\star, W_t \phi_-^\star} \neq 0 \ \text{ by  } \ \oB. 
\end{equations}
The operator $\DDi(t)$ emerges from a multiscale analysis of $\PP_\delta(t)$ -- see \cite{FLTW1,FLTW2}. 
The key tool is the analysis of \cite{DFW} which shows that the defect states of $\PP_{\delta,\star}$ with energy near $E_\star$ are in bijection with the modes of the effective Dirac operator $\DDi_\star = \DDi(\pi)$. This prediction applies similarly to $\PP_\delta(t)$, $\DDi(t)$. This allows to count precisely the eigenvalues of $\PP_\delta(t)$ in terms of those of $\DDi(t)$. We derive the spectral flow equality
\begin{equation}
\Sf_{\Tt^1} \big(\PP_\delta-E_\star\big) = \Sf_{\Tt^1}\big(\DDi\big).
\end{equation}
\item The operator $\DDi(t)$ depends on $t$ only through the function $t \mapsto \var(t)$, which is homotopic to $t \mapsto -\var_\star e^{imt}$ ($m$ is the degree of $\var$). Hence, $\DDi$ deforms continuously to a simpler Dirac operator
\begin{equation}
\Di_m(t) \de \sgn(\nu_\star) \bst  D_x - \bss  \chi_- - \matrice{0 & \ove{\var_\star}e^{-imt} \\ \var_\star e^{imt} & 0} \chi_+ \ : \ H^1(\R) \rightarrow  L^2(\R)
\end{equation}
again without changing the spectral flow.
\item The spectral flow of $t \mapsto \Di_m(t)$ equals $m$ times the spectral flow of $\Di_1(t)$. We replace $\chi_+$ (respectively $\chi_-$) in $\Di_1(t)$ by $\1_{[0,\infty)}$ (resp. $\1_{(-\infty,0]}$). This preserves the spectral flow; and reduces $\Di_1(t)$ to an operator with constant coefficients. An explicit calculation is possible:
\begin{equation}\label{eq:5b}
\Sf_{\Tt^1}\left(\Di_1\right) = \sgn(\nu_\star).
\end{equation}
The conclusion comes from the identity $\sgn(\nu_\star) = (-1)^{\frac{n-1}{2}}$.
\end{itemize}

\subsection{Proof of Theorem \ref{thm:2}} In \S\ref{sec:4}, we define the bulk index. We adopt a geometric point of view. First we review the Berry connection and curvature of eigengbundles over the two-torus.  The bulk index is the (first) Chern class of the first $n$ eigenbundles $\EE$. We represent this topological invariant as the integral of the Berry curvature. Let $P(\xi,t) = D_x^2 + V +W_t$, acting on $L^2_\xi$. In \S\ref{sec:5}, we compute the bulk index of $(\xi,t) \mapsto P(\xi,t)$ associated to the $n$-th gap. We deform $P(\xi,t)$ to an operator $P_{\delta}(\xi,t)$ whose $n$-th gap has width of order $\delta \ll 1$. The $n$-th eigenvalue of $P_{\delta}(\pi,t)$ is almost degenerate. The physical intuition suggests that the Berry curvature is ``largest" near $(\pi,t)$. For small $\delta$ and $\xi$ near $\pi$, the operator $P_{\delta}(\xi,t)$ behaves like a $2 \times 2$ tight-binding model $\Mm_\delta(\xi,t)$. We then show that the bulk index equals the Chern number for the low-energy eigenbundle of $\Mm_\delta(\xi,t)$. A direct calculation yields then Theorem \ref{thm:2}:
\begin{equation}
c_1(\EE) = \dfrac{(-1)^{\frac{n-1}{2}}}{2\pi i} \int_0^{2\pi} \dfrac{\var'(t)}{\var(t)} dt.
\end{equation}

Although the proofs of Theorems \ref{thm:1} and \ref{thm:2} are independent, they share the same ground: the reduction to a Dirac operator. We perform this reduction by shrinking the amplitude of $W$ to zero. The assumptions $\oA$ and $\oB$ ensure that the essential spectral gap does not close. This effectively removes all terms that are not related to the Dirac point $(\pi,E_\star)$.

\subsection{Comparison with earlier work} Papers due to Korotyaev \cite{Ko1,Ko2}, Dohnal--Plum--Reichel \cite{DPR} and Hempel--Kohlmann \cite{HK} previously studied  the dislocation problem on the line, i.e. the eigenvalues of
\begin{equation}\label{eq:1d}
Q(t) = D_x^2 + \1_{(-\infty,0]} \WWW + \1_{[0,\infty)} \WWW\big(\cdot + t/(2\pi)\big),
\end{equation}
where $\WWW$ is a one-periodic potential.\footnote{\cite{Ko2} is more general: it considers cases where the potentials on the left and right are different.} The main difference of \eqref{eq:1d} with $\PP(t)$ is the absence of a periodic background potential. It has fundamental implications.

The essential spectrum of $Q(t)$ coincide with the spectrum of the periodic operator $Q(0) = D_x^2 + \WWW$. When the $n$-th gap is open, Korotyaev \cite{Ko1} and Dohnal--Plum--Reichel \cite{DPR} show that there exist $0 = t_0 < t_1 < \dots <t_n = 1$ such that for every $t \in (t_j,t_{j+1})$, the operator $Q(t)$ has a unique eigenvalue in the $n$-th gap. The effective flow of eigenvalues of $Q(t)$ in the $n$-th gap is $n$. Hempel--Kohlmann \cite{HK} rediscovered this formula via a clever trick, flexible enough to work in two-dimensional analogs \cite{HK2,HK3}. \cite{DPR} provides many interesting numerical examples. We do not know whether the results of \cite{Ko1,DPR} persist against localized perturbations; though the spectral flow interpretation remains. 

In contrast with Theorem \ref{thm:1}, \cite{Ko1} requires no hypothesis on $\WWW$ except -- naturally -- an open $n$-th gap for $D_x^2+\WWW$. Our paper requires more assumptions, in particular \eqref{eq:9v}; but offers a different perspective:
\begin{itemize}
\item[(a)] we  add the periodic potential $V$ to \eqref{eq:1d}; 
\item[(b)] we investigate the topological relation between bulk and edge  -- a question not addressed in \cite{Ko1,Ko2,DPR,HK}. 
\end{itemize}

Because of (a), our class of model is topologically richer than that of \cite{Ko1,DPR,HK}. For instance, in \S\ref{sec:8} we produce for each pair of odd numbers $(m,n) \neq (-n,n)$ two potentials $V$ and $W$ such that the spectral flow of $\{\PP(t)\}_{t \in [0,2\pi]}$ in the $n$-th gap equals $m$. When $m \neq n$, this shows that $\{\PP(t)\}_{t \in [0,2\pi]}$ is topologically different from the families studied in \cite{Ko1,DPR,HK}. This indicates that some assumptions -- probably in the spirit of $\oA$ -- are necessary for Theorem \ref{thm:1} to hold. It also demonstrates that Theorem \ref{thm:1} and \ref{thm:2}  provide a way to topologically distinguish pairs of potentials $(V,W)$:  the data of the bulk-edge indexes $m_n$ associated to the $n$-th gap. 

In additional to  $\Z$-translational invariance, the operator $P(\xi,t)$ seems to exhibit generically only one symmetry: the complex conjugation. Specifically,
\begin{equation}
f \in L^2_{\xi} \ \ \Rightarrow \ \ \overline{P(\xi,t) f} = P(-\xi,t) \overline{f}.
\end{equation}
This is not the time-reversal symmetry, which demands for $P(-\xi,-t)$ on the RHS. The Kitaev table \cite{Ki,RSFL} seems to indicate that our model is in the same topological class as the quantum Hall effect -- after spectral projection to a given gap. The quantum Hall effect is characterized by a $\Z$-index. This suggest that the data of Chern numbers $\{m_n : n \geq 1\}$ should be enough to classify $(V,W)$ topologically. 

Our analysis relates to the study of waves scattered by impurities in periodic backgrounds, see e.g. \cite{DS,DH,FK,Ol,Bor,BG1,PLAJ,Bo7,HW,Bo8,Ze,WW}. In comparison with these papers, we focus here on topological aspects. The relation between spectral and topological aspects of linear PDEs has a long and rich history, with ongoing developments. We would like to mention some recent advances (with no attempt to make an exhaustive list):
\begin{itemize}
\item For Anosov flow on $3$-manifolds, \cite{DGRS} recently proved the equality between the analytic torsion and the dynamical zeta function at $0$ (see \cite{SM} for the analytic case). 
\item On a related note,  \cite{DZ2,Had} investigated the connection between the Hodge--de Rham cohomology and $k$-forms invariant under the Anosov flow.
\item The celebrated Atiyah--Patodi--Singer index theorems have now proofs that do not rely on ellipticity \cite{BS,Br2}.
\item In the sub-Riemannian context, \cite{Bar} studied the small-time asymptotic of the heat kernel in terms of topological invariants.  
\end{itemize}

\subsection{Further perspectives} 

This work suggests other directions of research:
\begin{itemize}
\item Assume that $(V,W)$ and $(\VV,\WW)$ are two pairs of potentials satisfying \eqref{eq:9v}, $\oA$ and $\oB$ in the $n$-th gap; but that the winding numbers defined by \eqref{eq:1z} are different. Then we expect non-topological transitions when one goes from $(V,W)$ to $(\VV,\WW)$. These relate to Chern number transfers at linear band crossings. See \cite{Fa,FZ,GLM} for further reading.
\item There is a rich literature on convergence of semiclassical operators to tight-binding models, see e.g. \cite{Ha,HS1,Si2,HS2,Ma1,Ou,Ma2,Ca} and more recently \cite{FLW6,FW3}. The bulk-edge correspondence holds in many tight-binding situations, see e.g. \cite{KRS,EGS,ASV,GP,PS,Sha,Br,GS,GT,ST} and references therein. A novel approach could consist in combining these two classes of results.
\item Under certain conditions, biperiodic backgrounds can support non-linear surface solitons \cite{DP,DPR2,DNPW}. Our work suggests that there may be a topological interpretation of these results.
\item We speculate that some of the results presented here have analogs in random situations where averaging effect arise -- such as in \cite{BZ1,BZ2,BGu1,BG2,BG3,Dr4}. The quantum Hall effect has been studied in continuous random situations \cite{KS,CG,HS1,HS2,DHS,Taa}. For us the discussion is still speculative: it is not clear what random model to consider.
\item It would be nice to present a K-theoretic interpretation of our result. We point to \cite{BR} for a K-theory proof of the bulk-edge correspondence in \textit{continuous} lattice and related references. 
\end{itemize}

\subsection*{Notations}
\begin{itemize}
\item $\Ss^1 \subset \C$ is the sphere $\{z \in \C : |z|=1\}$; 
\item $\Bb(z,r) \subset \C$ denotes the ball centered at $z \in \C$, of radius $r$. 
\item If $E,E' \subset \R^N$, $\dist(E,E')$ denotes the Euclidean distance between sets. 
\item $D_x$ is the operator $\frac{1}{i}\p_x$.
\item The space $L^2_\xi$ consists of $\xi$-quasiperiodic functions (w.r.t. $\Z$):
\begin{equation}
L^2_\xi \de \left\{ u\in L^2_\loc : u(x+1) = e^{i\xi} u(x) \right\}.
\end{equation}
It splits orthogonally as $L^2_\xi = L^2_{\xi,\ev} \oplus L^2_{\xi,\od}$,
\begin{equations}
L^2_{\xi,\ev} \de \left\{ u \in L^2_\loc : u(x+1/2,\xi) = e^{i\xi/2} u(x)\right\}, 
\\
 L^2_{\xi,\od} \de \left\{ u \in L^2_\loc : u(x+1/2,\xi) = -e^{i\xi/2} u(x)\right\}.
\end{equations}
\item If $f, g$ are square integrable on $(0,1)$, we set
\begin{equation}
\lr{f,g} \de \int_0^1 \ove{f(x)} g(x)dx.
\end{equation}
\item $V$, $W$ denote smooth, real-valued potentials such that $V(x+1/2) = V(x)$ and $W(x+1/2) = -W(x)$. 
\item $W_t$ is the periodic potential $x \mapsto W(x+t/(2\pi))$.
\item $n$ is an odd integer referring to the $n$-th gap of $D_x^2 + V+W$.
\item $(\pi,E_\star)$ denotes the $n$-th Dirac point of $D_x^2+V$ and a Dirac eigenbasis is $(\phi_+^\star,\phi_-^\star) \in L^2_{\pi,\ev} \times L^2_{\pi,\od}$. See \S\ref{sec:2.4}.
\item The Fermi velocity is $\nu_\star = 2\blr{D_x\phi_+^\star,\phi_+^\star}$; we will also use $\nu_F = |\nu_\star|$.
\item $\var$ is the $2\pi$-periodic function $t \mapsto \blr{\phi_-^\star,W_t\phi_+^\star}$; $\var_\star$ equals $\var(\pi)$; the integer $m$ is the winding number of $\var$ around $0$. We will write $|\var_\star|=\var_F$
\item $P_{s}(\xi,t)$ is the operator $D_x^2 + V + sW_t$ acting on $L^2_{\xi}$. Its discrete spectrum is $\lambda_{s,1}(\xi,t) \leq \dots \leq \lambda_{s,\ell}(\xi,t) \leq \dots$.
\item $\chi_\pm$ are two smooth functions on $\R$ such that
\begin{equation}\label{eq:9p}
\chi_+ + \chi_- = 1, \ \ \ \ \chi_+(x) = \systeme{1 & \text{for } x \gg 1 \\ 0 & \text{for } x \ll -1}, \ \ \ \ \chi_-(x) = \systeme{1 & \text{for } x \ll -1 \\ 0 & \text{for } x \gg 1}.
\end{equation}
\item The Pauli matrices are
\begin{equation}\label{eq:9z}
\bsu = \matrice{0 & 1 \\ 1 & 0}, \ \ \bsd = \matrice{0 & i \\ -i & 0}, \ \  \bst = \matrice{1 & 0 \\ 0 & -1}.
\end{equation}
These matrices satisfy $\bsj^2 = \Id_2$ and $\bsi \bsj = -\bsj \bsi$ for $i \neq j$.
\item The matrix $\bss$ is defined by
\begin{equation}
\bss = \matrice{0 & \ove{\var_\star} \ \\ \var_\star & 0} = \Re(\var_\star) \bsu + \Im(\var_\star) \bsd. 
\end{equation}
\item $\Tt^1$ is the torus $\R/(2\pi\Z)$; $\Tt^2$ is the two-torus $\R^2/(2\pi\Z)^2$.
\item If $E \rightarrow \Tt^2$ is a smooth vector bundle, $\Gamma(\Tt^2,E)$ denotes the space of smooth sections of $E$.
\item $\CCC$ is the cylinder $\Tt^1 \times \R$.
\item The spectral flow of a $2\pi$-periodic family of operators $A(t)$ is $\Sf_{\Tt^1}(A)$ (see \S\ref{sec:4.0}).
\item The (first) Chern class of a vector bundle $\EE$ is $c_1(\EE)$ (see \S\ref{sec:8.3}).
\item If $\HH$ is a Hilbert space and $\psi \in \HH$, we write $|\psi|_\HH$ for the norm of $\HH$; if $A : \HH \rightarrow \HH$ is a bounded operator of $\HH$, the operator norm of $A$ is
\begin{equation}
\| A \|_\HH \de \sup_{|\psi|_\HH=1} |A\psi|_\HH.
\end{equation}
\item If $A$ is a (possibly unbounded) operator on a Hilbert space $\HH$, the spectrum of $A$ is $\Sigma_\HH(A)$. If $A$ is trace class, then $\Tr_\HH[A]$ denotes the trace of $A$.
\item If $\psi_\epsi \in \HH$ (resp. $A_\epsi : \HH \rightarrow \HH$ is a linear operator) depend on a parameter $\epsi$, we write $\psi_\epsi = O_\HH(f(\epsi))$ -- resp. $A_\epsi = \OO_\HH(f(\epsi))$ -- when there exists $C > 0$ such that $|\psi_\epsi|_\HH \leq Cf(\epsi)$ -- resp. $\|A_\epsi\|_\HH \leq C f(\epsi)$ -- for sufficiently small $\epsi$.
\end{itemize}

\subsection*{Acknowledgments} I would like to thank Alexander Watson and Michael Weinstein for introducing me to the topological aspects of condensed matter physics and for interesting discussions; Semyon Dyatlov for suggesting a more conceptual proof of Lemma \ref{lem:1b}; and Jacob Shapiro for referring me to the Kitaev table. I gratefully acknowledge support from the Simons Foundation through M. Weinstein's Math+X investigator award $\#$376319 and support from NSF DMS-1800086.

\section{Overview of Floquet--Bloch theory and 1D Dirac points}\label{sec:3.0}

\subsection{Dispersion curves} We review basic 1D Floquet--Bloch theory.  The basic reference is Reed--Simon \cite[\S XIII]{RS}. Kuchment recently wrote a nice survey \cite{Ku}.

Let $\VVV \in C^\infty(\R,\R)$ be one-periodic. The operator $D_x^2 + \VVV$ acting on the space
\begin{equation}
L^2_\xi = \left\{ u \in L^2_\loc : u(x+1) = e^{i\xi} u(x) \right\}
\end{equation}
has discrete spectrum $E_1(\xi) \leq E_2(\xi) \leq \dots$. The maps $\xi \in [0,2\pi] \mapsto E_j(\xi)$ are called dispersion curves. They are symmetric about $\xi = \pi$ (because $\VVV$ is real valued). 
The eigenvalue problem is a second order differential equation. Its solution space is at most two-dimensional. Since $\VVV$ is real-valued, one can deduce -- see e.g. \cite[Theorem XIII.89]{RS} -- that
\begin{equations}\label{eq:3h}
n \text{ odd } \Rightarrow \ E_n \text{ is increasing on } [0,\pi] \text{ and decreasing on } [\pi,2\pi]; 
\\
n \text{ even } \Rightarrow \ E_n \text{ is decreasing on } [0,\pi] \text{ and increasing on } [\pi,2\pi]. 
\end{equations}
Another consequence is that degenerate eigenvalues can occur only if $\xi = \pi \mod 2\pi$:
\begin{equation}\label{eq:4a}
E_n(\xi) = E_{n+1}(\xi) \ \Rightarrow \ \systeme{ \xi = 0 \text{ if } n \text{ is even;} \\ \xi = \pi \text{ if } n \text{ is odd.} }
\end{equation}
The next result implies that dispersion curves are stable. It is a direct consequence of \cite[Appendix A]{FW2}.

\begin{lem}\cite[Appendix A]{FW2}\label{lem:1m} Let $\ell$ be an integer and $C > 0$; there exists $\epsi_0 > 0$ such that the following holds. Let $\VVV$ and $\WWW$ be two periodic potentials and $E_{\VVV,\ell}(\xi)$ be the $\ell$-th $L^2_\xi$-eigenvalue of $D_x^2+\VVV$; $E_{\WWW,\ell}(\xi)$ be the $\ell$-th $L^2_\xi$-eigenvalue of $D_x^2+\WWW$. Then
\begin{equation}
|\VVV-\WWW|_\infty  \leq \epsi_0, \ \ |\VVV|_\infty+|\WWW|_\infty  \leq C \ \ \Rightarrow \ \ \big|E_{\WWW,\ell}(\zeta) - E_{\VVV,\ell}(\xi)\big|\leq |\VVV-\WWW|_\infty.
\end{equation}
\end{lem}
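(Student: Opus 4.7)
My approach is to invoke the Courant--Fischer--Weyl min-max principle applied to the quadratic forms of $D_x^2 + \VVV$ and $D_x^2 + \WWW$ on $L^2_\xi$. Both operators are selfadjoint, bounded below, and have compact resolvent (the fundamental domain $[0,1]$ being compact), so their spectra consist of eigenvalues $E_{\VVV,1}(\xi) \leq E_{\VVV,2}(\xi) \leq \dots$ accumulating only at $+\infty$, and likewise for $\WWW$.

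The key step is to compare the two variational formulas
\[
E_{\VVV,\ell}(\xi) = \min_{\substack{L \subset H^1_\xi \\ \dim L = \ell}} \max_{\substack{u \in L \\ |u|_{L^2_\xi} = 1}} \Big( |D_x u|_{L^2_\xi}^2 + \int_0^1 \VVV(x) |u(x)|^2 \, dx \Big),
\]
and its analog for $\WWW$, where $H^1_\xi \subset H^1_\loc$ denotes the $\xi$-quasiperiodic Sobolev space (the common form domain of $D_x^2 + \VVV$ and $D_x^2 + \WWW$). For any $u \in L^2_\xi$ with $|u|_{L^2_\xi} = 1$, the two Rayleigh quotients differ by $\int_0^1 (\WWW - \VVV)|u|^2 \, dx$, which is bounded in absolute value by $|\VVV - \WWW|_\infty$. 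Inserting this pointwise comparison inside the min-max over each admissible $\ell$-dimensional subspace $L$ yields immediately
\[
E_{\WWW,\ell}(\xi) \leq E_{\VVV,\ell}(\xi) + |\VVV - \WWW|_\infty,
\]
and swapping the roles of $\VVV$ and $\WWW$ gives the reverse inequality.

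This produces $|E_{\WWW,\ell}(\xi) - E_{\VVV,\ell}(\xi)| \leq |\VVV - \WWW|_\infty$, which is the stated conclusion; the occurrence of $\zeta$ on the left-hand side of the displayed inequality is to be read as $\zeta = \xi$, since otherwise the bound would fail for any non-constant band. The smallness threshold $\epsi_0$ and the uniform bound $C$ play no role in this min-max argument; they are presumably artefacts of the original proof in \cite{FW2}, which may instead proceed via resolvent or perturbation theory in order to simultaneously track eigenprojections. I anticipate no real obstacle here, as the min-max cleanly handles possible band crossings $E_{\VVV,\ell}(\xi) = E_{\VVV,\ell \pm 1}(\xi)$ without further care.
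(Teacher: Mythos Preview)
Your min-max argument is correct and complete: both operators share the common form domain $H^1_\xi$, the Rayleigh quotients differ by at most $|\VVV-\WWW|_\infty$ for any normalized $u$, and the Courant--Fischer principle transfers this bound to each ordered eigenvalue. Your remarks that $\zeta$ must be read as $\xi$, and that the hypotheses $\epsi_0$ and $C$ are not actually needed for the bare inequality, are also correct.

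The paper itself does not give a proof of this lemma: it simply states it as ``a direct consequence of \cite[Appendix A]{FW2}'' and moves on. So there is no in-paper argument to compare against. Your proof is the standard self-contained route; the cited appendix in \cite{FW2} presumably contains either the same min-max computation or a closely related resolvent-perturbation estimate that also yields information on eigenprojections (which would explain the superfluous constants $\epsi_0$ and $C$ carried over in the statement). Either way, nothing is missing from your argument.
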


\subsection{1D Dirac points and dimer symmetry}\label{sec:2.4}

We now review 1D Dirac points. 

\begin{definition}\label{def:1}
We say that $D_x^2 + \VVV$ has a Dirac point at $(\xi_\star,E_\star)$ if there exist an integer $n$ and $\nu_F > 0$ such that
\begin{equations}\label{eq:3r}
E_n(\xi) = E_\star - \nu_F\cdot |\xi-\xi_\star| + O(\xi-\xi_\star)^2, \\ E_{n+1}(\xi) = E_\star + \nu_F \cdot |\xi-\xi_\star| + O(\xi-\xi_\star)^2.
\end{equations}
\end{definition}

In particular, $n$ is odd because of the monotonicity properties of dispersion curves  -- see \eqref{eq:3h}. Dirac points energies are two-fold degenerate eigenvalues of $D_x^2+\VVV$ on $L^2_{\xi_\star}$. 

Dirac points are usually studied in higher-dimensional lattices. Regarding mathematical proofs of their existence in honeycomb lattices, we refer to \cite{CV,BC,Le} and especially \cite{FW} which shows that they arise generically. We also mention that different types of degeneracies may arise in other types of lattices. See e.g. the Lieb lattice where dispersion surfaces intersect quadratically \cite{KMO}. Here, we will be specifically interested in Dirac points generated by \textit{dimer} symmetries. 

\begin{lem}\cite[Appendix B.1]{Wat}\label{lem:1u} Assume that 
\begin{equation}\label{eq:9w}
\VV \in C^\infty(\R,\R),  \ \ \ \ 
\VV(x+1/2) = \VV(x).
\end{equation}
Then the Dirac points of $D_x^2 + V$ (regarded as a one-periodic operator) with $\xi_\star = \pi$ are precisely the pairs $(\pi, E_n(\pi))$  where $n$ spans the odd integers.
\end{lem}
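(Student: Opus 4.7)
The plan is to exploit the extra $1/2$-periodicity of $V$, which makes $D_x^2 + V$ commute with the half-translation $T_{1/2}: u \mapsto u(\cdot + 1/2)$. On $L^2_\xi$ the operator $T_{1/2}^2$ acts as multiplication by $e^{i\xi}$, so $T_{1/2}$ has eigenvalues $\pm e^{i\xi/2}$ and produces the orthogonal decomposition $L^2_\xi = L^2_{\xi,\mathrm{ev}} \oplus L^2_{\xi,\mathrm{od}}$ recalled in the Notations, stable under $D_x^2 + V$. The identity map identifies $L^2_{\xi,\mathrm{ev}}$ with the space of functions on $\R$ satisfying $u(x+1/2) = e^{i\xi/2} u(x)$, and $L^2_{\xi,\mathrm{od}}$ with the space satisfying $u(x+1/2) = -e^{i\xi/2} u(x)$; viewing $V$ as $1/2$-periodic, these are the $(\xi/2)$- and $(\xi/2 + \pi)$-quasi-periodic spaces at period $1/2$.

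Let $\widetilde{E}_k(\eta)$ denote the $k$-th dispersion curve of $D_x^2 + V$ regarded as a $1/2$-periodic operator. By the decomposition above, the spectrum on $L^2_\xi$ is the multiset union $\{\widetilde{E}_k(\xi/2)\}_k \cup \{\widetilde{E}_k(\xi/2 + \pi)\}_k$. Reality of $V$ gives $\widetilde{E}_k(-\eta) = \widetilde{E}_k(\eta)$, hence $\widetilde{E}_k(\xi/2 + \pi) = \widetilde{E}_k(\pi - \xi/2)$; at $\xi = \pi$ both expressions take the common value $\widetilde{E}_k(\pi/2)$. Re-ordering yields the pairing $E_{2k-1}(\pi) = E_{2k}(\pi) = \widetilde{E}_k(\pi/2)$ for every integer $k \geq 1$.

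For $n = 2k-1$ odd, the two branches meeting at $(\pi, \widetilde{E}_k(\pi/2))$ are then $\xi \mapsto \widetilde{E}_k(\xi/2)$ and $\xi \mapsto \widetilde{E}_k(\pi - \xi/2)$, with opposite slopes $\pm \nu/2$ at $\xi = \pi$, where $\nu := \widetilde{E}_k'(\pi/2)$. Taking their lower/upper envelopes and using the global ordering $E_{2k-1} \leq E_{2k}$ produces
\begin{equation*}
E_n(\xi) = \widetilde{E}_k(\pi/2) - \frac{|\nu|}{2}|\xi-\pi| + O\big((\xi-\pi)^2\big), \quad E_{n+1}(\xi) = \widetilde{E}_k(\pi/2) + \frac{|\nu|}{2}|\xi-\pi| + O\big((\xi-\pi)^2\big),
\end{equation*}
exactly the cone of Definition \ref{def:1} with $\nu_F = |\nu|/2$. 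Conversely, for $n$ even, \eqref{eq:4a} gives $E_n(\pi) \neq E_{n+1}(\pi)$, which already rules out a Dirac point at $(\pi, E_n(\pi))$.

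The only real obstacle is the non-vanishing of $\nu = \widetilde{E}_k'(\pi/2)$: the loose monotonicity of \eqref{eq:3h} does not by itself prevent a tangential crossing, which would spoil the linear dispersion required by Definition \ref{def:1}. To get around this one must invoke the classical Wronskian fact that dispersion curves of a 1D periodic Schr\"odinger operator are strictly monotonic on the interiors of $(0,\pi)$ and $(\pi,2\pi)$ --- equivalently, that band edges can occur only at $\xi = 0$ or $\xi = \pi$ --- which is standard Hill-equation theory (see \cite[\S XIII]{RS}).
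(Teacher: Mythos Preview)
Your proof is correct and uses the same structural idea as the paper: exploit the $1/2$-periodicity to decompose $L^2_\xi = L^2_{\xi,\ev}\oplus L^2_{\xi,\od}$ and identify the two branches through $\xi=\pi$ with a single dispersion curve $\tilde E_k$ of the $1/2$-periodic operator, evaluated at symmetric quasimomenta. The difference lies in the key non-degeneracy step $\tilde E_k'(\pi/2)\neq 0$. You invoke it as a classical Hill-theory fact (strict monotonicity of 1D dispersion curves on the open Brillouin zone, equivalently $\Delta'(E)\neq 0$ on open bands). The paper instead gives a self-contained argument: assuming the derivative vanished, analytic continuation and Rouch\'e's theorem would produce, for each energy $E$ near $E_\star$, one real and one non-real quasimomentum solving $\mu_{\ev,N}(\xi)=E$; together with the reflected real quasimomentum $2\pi-\xi_1$ (landing in $L^2_{\xi,\od}$), this yields three linearly independent solutions of the second-order ODE $(D_x^2+V-E)u=0$, a contradiction.

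Both routes are valid. Yours is shorter and transparent once the Hill-theory input is granted; the paper's argument is longer but avoids quoting that input and has the merit of being entirely elementary (analyticity of simple eigenvalue branches plus the dimension of the ODE solution space). If you keep your version, make the citation precise: Reed--Simon~XIII.89 gives monotonicity but the \emph{strict} monotonicity on $(0,\pi)$ really comes from the discriminant relation $\Delta(E_k(\xi))=2\cos\xi$ together with $\Delta'\neq 0$ on open bands, so you should point to a Hill-equation reference (e.g.\ Eastham or Magnus--Winkler) rather than just \cite[\S XIII]{RS}.
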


Watson \cite[Appendix B.1]{Wat} gave a proof of Lemma \ref{lem:1u}, improving upon \cite[Appendix D]{FLTW2} that showed genericity. We provide an elementary proof in Appendix \ref{sec:7.5}. These proofs use that for $\xi \in (0,2\pi)$, $D_x^2 + \VVV$ acting on the spaces
\begin{equations}
L^2_{\xi,\ev} \de \left\{ u \in L^2_\loc : u(x+1/2,\xi) = e^{i\xi/2} u(x)\right\}, 
\\
 L^2_{\xi,\od} \de \left\{ u \in L^2_\loc : u(x+1/2,\xi/2) = -e^{i\xi} u(x)\right\},
\end{equations}
have simple eigenvalues -- see Lemma \ref{lem:2m}. This fact is also the basis for:

\begin{lem}\label{lem:1c} Let $\VVV$ be $1/2$-periodic, $n$ an odd integer and $(\pi,E_\star) = (\pi,E_n(\pi))$ be a Dirac point of $D_x^2 + \VVV$. Then for each $\xi \in (0,2\pi)$, there exist $L^2_\xi$-normalized eigenpairs $(\lambda_+(\xi),\phi_+(\xi))$ and $(\lambda_-(\xi),\phi_-(\xi))$ of $D_x^2+\VVV$ varying smoothly with $\xi$ and satisfying:
\begin{itemize}
\item $\phi_+(\xi) \in L^2_{\xi,\ev}$ and $\phi_-(\xi) \in L^2_{\xi,\od}$.
\item If $\phi_+^\star \de \phi_+(\pi)$ and $\phi_-^\star \de \phi_-(\pi)$ then $\ove{\phi_+^\star} = \phi_-^\star$.
\item For $\xi$ in a neighborhood of $\pi$,
\begin{equation}
\begin{matrix}
\lambda_+(\xi) = E_\star + \nu_\star \cdot (\xi-\pi) + O(\xi-\pi)^2,  \\
\lambda_-(\xi) = E_\star - \nu_\star \cdot (\xi-\pi) + O(\xi-\pi)^2,
\end{matrix} \ \ \ \ \ \nu_\star \de 2\blr{\phi_+^\star,D_x\phi_+^\star} = - 2\blr{\phi_-^\star,D_x\phi_-^\star} \neq 0.
\end{equation}
\end{itemize}
\end{lem}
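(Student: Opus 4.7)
The plan is to exploit the dimer symmetry $\VVV(x+1/2) = \VVV(x)$ to diagonalize the spectral problem and apply simple-eigenvalue perturbation theory on each symmetry subspace. Since $\VVV$ is $1/2$-periodic, $D_x^2 + \VVV$ preserves the orthogonal splitting $L^2_\xi = L^2_{\xi,\ev} \oplus L^2_{\xi,\od}$, and by Lemma \ref{lem:2m} (invoked in the preceding paragraph) its eigenvalues on each of these subspaces are simple for every $\xi \in (0, 2\pi)$. At $\xi = \pi$, the Dirac point energy $E_\star$ is two-fold degenerate on $L^2_\pi$, so by simplicity on each piece it is attained exactly once on $L^2_{\pi,\ev}$ and once on $L^2_{\pi,\od}$. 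I first unitarily conjugate by $u \mapsto e^{-i(\xi-\pi)x}u$, which sends $L^2_{\xi,\bullet}$ onto the $\xi$-independent space $L^2_{\pi,\bullet}$ (for $\bullet \in \{\ev,\od\}$) and takes the operator to $H(\xi) \de (D_x + \xi - \pi)^2 + \VVV$. Since $H(\xi)$ depends polynomially on $\xi$, Kato's analytic perturbation theory on each of the two sectors supplies real-analytic branches $\xi \mapsto (\lambda_\pm(\xi), \phi_\pm(\xi))$ with $\phi_+(\xi) \in L^2_{\xi,\ev}$, $\phi_-(\xi) \in L^2_{\xi,\od}$, and $\lambda_\pm(\pi) = E_\star$.

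The relation $\ove{\phi_+^\star} = \phi_-^\star$ comes from the reality of $\VVV$. Complex conjugation commutes with $D_x^2 + \VVV$, and since a function $u$ with $u(x+1/2) = iu(x)$ obeys $\ove{u}(x+1/2) = -i\ove{u}(x)$, it maps $L^2_{\pi,\ev}$ bijectively onto $L^2_{\pi,\od}$. Thus $\ove{\phi_+^\star}$ is an $L^2_{\pi,\od}$-eigenvector at $E_\star$; by simplicity it equals $e^{i\te}\phi_-^\star$ for some $\te \in \R$, and I absorb this phase into the free $\Ss^1$-rotation of $\phi_-^\star$.

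For the Fermi velocity I differentiate $(H(\xi) - \lambda_\pm(\xi))\psi_\pm(\xi) = 0$, with $\psi_\pm(\xi) \de e^{-i(\xi-\pi)x}\phi_\pm(\xi) \in L^2_{\pi,\bullet}$, at $\xi = \pi$ and pair with $\psi_\pm(\pi) = \phi_\pm^\star$. Since $H'(\pi) = 2D_x$, the standard Hellmann--Feynman identity yields $\lambda_\pm'(\pi) = 2\blr{\phi_\pm^\star, D_x \phi_\pm^\star}$. Invoking $\phi_-^\star = \ove{\phi_+^\star}$ together with $D_x \ove{f} = -\ove{D_x f}$ (and self-adjointness of $D_x$, which makes $\blr{\phi_+^\star, D_x\phi_+^\star}$ real) produces $\lambda_-'(\pi) = -\lambda_+'(\pi)$, matching the expansions in the statement with $\nu_\star \de 2\blr{\phi_+^\star, D_x \phi_+^\star}$. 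Non-vanishing of $\nu_\star$ is where the Dirac point hypothesis enters: the two real-analytic branches $\lambda_+,\lambda_-$ must together comprise $\{E_n(\xi), E_{n+1}(\xi)\}$ of \eqref{eq:3r} on a neighborhood of $\pi$, and matching the slopes of these $|\cdot|$-regular curves forces $|\nu_\star| = \nu_F > 0$. The main obstacle is precisely this last reconciliation between the real-analyticity of $\lambda_\pm$ on each symmetry sector and the conical non-smooth behavior of the full dispersion curves: the positivity $\nu_F > 0$ built into Definition \ref{def:1} is exactly what prevents the two analytic branches from touching tangentially at $\pi$ instead of crossing transversally.
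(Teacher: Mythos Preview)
Your proof is correct and complete. The paper does not supply its own proof of this lemma: it simply cites \cite[Proposition 3.7]{FLTW1} and \cite[Proposition 2.1]{DFW} for the argument. Your write-up is precisely the natural proof one expects in this setting---splitting along the dimer subspaces $L^2_{\xi,\ev}\oplus L^2_{\xi,\od}$, invoking Lemma~\ref{lem:2m} for simplicity on each piece, conjugating to the fixed fiber $L^2_\pi$ so that Kato's analytic perturbation theory applies, then reading off $\lambda_\pm'(\pi)$ via Hellmann--Feynman and using conjugation symmetry to relate the two slopes. The identification $|\nu_\star|=\nu_F$ via matching the analytic branches to the conical dispersion of Definition~\ref{def:1} is exactly the right way to extract $\nu_\star\neq 0$ from the Dirac-point hypothesis.
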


We refer to \cite[Proposition 3.7]{FLTW1} for a more general statement and to \cite[Proposition 2.1]{DFW} for the statement and proof of the version needed here. The numbers $\nu_\star$ and $\nu_F$ are related via $|\nu_F| = \nu_\star$. As a consequence of the monotonicity property of the eigenvalues of $D_x^2+V$ on $L^2_{\xi,\ev}$ and $L^2_{\xi,\od}$ -- see Lemma \ref{lem:2m} and \eqref{eq:6q} -- the sign of $\nu_\star$ is related to the value of $n$ modulo $4$:
\begin{equation}\label{eq:9t}
n = 1 \mod 4 \ \Rightarrow \ \nu_\star > 0, \ \ \ \ n = 3 \mod 4 \ \Rightarrow \ \nu_\star < 0.
\end{equation}
In particular, $\sgn(\nu_\star) = (-1)^{\frac{n-1}{2}}$. 

The pair $(\phi_+^\star,\phi_-^\star)$ solves
\begin{equations}\label{eq:3s}
\big( D_x^2+\VVV-E_\star \big)\phi_\pm^\star = 0, \ \ \phi_+^\star(x) = \ove{\phi_-^\star(x)},  \\ 
\phi_+^\star \in L^2_{\pi,\ev}, \ \ \phi_-^\star \in L^2_{\pi,\od}, \ \ |\phi_+|_{L^2_\pi} =  |\phi_-|_{L^2_\pi} = 1,
\end{equations}
Because the eigenvalues of $D_x^2+\VVV$ on $L^2_{\pi,\ev}$ and $L^2_{\pi,\od}$ are simple, this characterizes $(\phi_+^\star,\phi_-^\star)$ uniquely, modulo the $\Ss^1$-action $(\phi_+^\star,\phi_-^\star) \mapsto (\w\phi_+^\star,\ove{\w}\phi_-^\star)$ where $\w \in \Ss^1$.
We refer to $(\phi_+^\star,\phi_-^\star)$ as a Dirac eigenbasis. 

We will be interested in bifurcations of Dirac points when the $1/2$-translational symmetry is broken. For the rest of the paper, we fix one-periodic potentials $V$ and $W$ with
\begin{equation}
V, \ W \in C^\infty(\R,\R),  \ \ \ \ V(x+1/2) = V(x), \ \ \ \
W(x+1/2) = -W(x).
\end{equation}
Fix $n$ an odd integer. Because $V$ satisfies the assumption of Lemma \ref{lem:1u}, it has a Dirac point at $(\pi,E_\star)$, where $E_\star$ is the $n$-th eigenvalue of $D_x^2 + V$ on $L^2_\pi$. Let $\nu_\star, \ \phi_+(\xi), \ \phi_-(\xi)$ be the objects associated via Lemma \ref{lem:1c}.

We define $W_t$ as $W_t(x) \de W\big(x+t/(2\pi) \big)$. Introduce
\begin{equation}\label{eq:5d}
\var(t) = \blr{\phi_-^\star,W_t \phi_+^\star}.
\end{equation}
If $\var(t)$ never vanishes -- assumption $\oB$ -- then $\var$ has a well-defined winding number on $[0,2\pi]$. Because of the above characterization of $(\phi_+^\star,\phi_-^\star)$ modulo the $\Ss^1$-action, this winding number is independent of the choice of Dirac eigenbasis. It is given by
\begin{equation}
m = \dfrac{1}{2\pi i} \int_0^{2\pi} \dfrac{\var'(t)}{\var(t)} dt.
\end{equation}
We claim that $m$ is odd. Indeed, the winding number of $\tvartheta : t \mapsto e^{-it} \var(t)$ on $[0,2\pi]$ is $\tm = m-1$. Moreover $\tvartheta$ satisfies $\tvartheta(t+\pi) = \tvartheta(t)$; therefore $\tvartheta$ is $\pi$-periodic and its winding number on $[0,2\pi]$ is even. This proves that $m$ is odd. We conclude this section with a simple lemma, proved in \cite[Proposition 2.2]{DFW} when $t=\pi$. The proof goes through without changes when $t \neq \pi$.

\begin{lem}\cite[Proposition 2.2]{DFW}\label{lem:2z} With the above notations,
\begin{equations}
\matrice{\blr{\phi_+^\star,W_t \phi_+^\star} & \blr{\phi_+^\star,W_t \phi_-^\star} \\ \blr{\phi_-^\star,W_t \phi_+^\star} & \blr{\phi_-^\star,W_t \phi_-^\star}} = \matrice{0 & \ove{\var(t)} \ \\ \var(t) & 0}. 
\end{equations}
\end{lem}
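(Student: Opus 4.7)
The plan is to split the claim into two parts: the vanishing of the diagonal entries and the conjugation relation between the two off-diagonal entries. The vanishing of the diagonal entries is the only part with content; the off-diagonal identity is essentially just self-adjointness of multiplication by a real potential.

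First I would address the diagonal entries $\langle \phi_\pm^\star, W_t \phi_\pm^\star\rangle$. The key observation is that the integrands are half-antiperiodic on $[0,1]$. Indeed, $\phi_+^\star \in L^2_{\pi,\ev}$ and $\phi_-^\star \in L^2_{\pi,\od}$ satisfy $\phi_+^\star(x+1/2) = i\,\phi_+^\star(x)$ and $\phi_-^\star(x+1/2) = -i\,\phi_-^\star(x)$, so in either case $|\phi_\pm^\star(x+1/2)|^2 = |\phi_\pm^\star(x)|^2$. On the other hand, $W$ is half-antiperiodic by \eqref{eq:9v}, so the shift $W_t(x)=W(x+t/(2\pi))$ retains this property: $W_t(x+1/2) = -W_t(x)$. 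Therefore the real-valued function $x\mapsto |\phi_\pm^\star(x)|^2 W_t(x)$ satisfies $f(x+1/2) = -f(x)$, and splitting $\int_0^1 = \int_0^{1/2}+\int_{1/2}^1$ and substituting $x = y+1/2$ in the second piece yields
\begin{equation}
\int_0^1 |\phi_\pm^\star(x)|^2 W_t(x)\, dx = 0.
\end{equation}
This gives $\langle \phi_+^\star, W_t\phi_+^\star\rangle = \langle \phi_-^\star, W_t\phi_-^\star\rangle = 0$.

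Next I would handle the off-diagonal entries. By definition $\vartheta(t) = \langle \phi_-^\star, W_t\phi_+^\star\rangle$, so only $\langle \phi_+^\star,W_t\phi_-^\star\rangle = \overline{\vartheta(t)}$ remains to be checked. Since $W_t$ is real-valued, multiplication by $W_t$ is a selfadjoint (bounded) operator on $L^2_\pi$ with respect to the hermitian pairing $\lr{\cdot,\cdot}$, so
\begin{equation}
\langle \phi_+^\star, W_t\phi_-^\star\rangle = \overline{\langle W_t\phi_-^\star, \phi_+^\star\rangle} = \overline{\langle \phi_-^\star, W_t\phi_+^\star\rangle} = \overline{\vartheta(t)}.
\end{equation}
Combining the two calculations yields the matrix identity.

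I do not expect any genuine obstacle: the proof uses only the dimer symmetries \eqref{eq:9v} and the parities of the Dirac eigenbasis recalled in \eqref{eq:3s}, together with the fact that $W_t$ is real. The one potential pitfall is to check carefully that the translate $W_t$ still satisfies the half-antiperiodicity $W_t(x+1/2)=-W_t(x)$, which is immediate from $W(x+1/2)=-W(x)$ but is what makes the argument pass through uniformly in $t$.
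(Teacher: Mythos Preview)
Your proof is correct. The paper does not give its own argument for this lemma; it simply cites \cite[Proposition 2.2]{DFW} for the case $t=\pi$ and remarks that the proof goes through unchanged for general $t$. Your argument---vanishing of the diagonal via the half-antiperiodicity of $|\phi_\pm^\star|^2 W_t$, and the conjugation relation via self-adjointness of multiplication by the real potential $W_t$---is exactly the expected proof and is the one that underlies the cited reference.
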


\section{Examples}\label{sec:8}

In this section, we present settings where Theorems \ref{thm:1} and \ref{thm:2} apply. We start with pairs $(V,W)$ that display the same topology as the dislocation problem of \cite{Ko1,DPR,HK}. Let $\VV, \WW$ be one-periodic potentials with:
\begin{equation}
\VV,\ \WW \in C^\infty(\R,\R), \ \ \ \ \VV(x+1/2) = \VV(x), \ \ \ \ \WW(x+1/2) = -\WW(x+1/2).
\end{equation}
If $f \in C^\infty(\R,\C)$ is a one-periodic function we denote by $\hf_\ell$ its $\ell$-th Fourier coefficient:
\begin{equation}
\hf_\ell \de \int_0^1 f(x)e^{-2i\pi \ell x} dx.
\end{equation}

\begin{lem}\label{lem:1f} Fix $n$ an odd integer. Assume that $\hWW_n \neq 0$; and $n=1$ or $\hVV_{n-1} \neq 0$. If $\epsi$ is sufficiently small then the pair $(V,W) = (\epsi \VV, \epsi \WW)$ satisfies \eqref{eq:9v},  $\oA$ and $\oB$. In addition, the winding number \eqref{eq:1z} for the $n$-th gap equals $(-1)^{\frac{n+1}{2}}  \cdot n$ and the bulk/edge index equals $-n$.
\end{lem}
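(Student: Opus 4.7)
The pair $(V,W) = (\epsi\VV, \epsi\WW)$ satisfies \eqref{eq:9v} for all $\epsi$. Since $V = \epsi\VV$ is $1/2$-periodic, Lemma \ref{lem:1u} produces a Dirac point $(\pi, E_\star(\epsi))$ for each odd $n$. At $\epsi = 0$, the $L^2_\pi$-eigenspace of $D_x^2$ at the eigenvalue $n^2\pi^2$ is $\spane\{e^{\pm in\pi x}\}$; checking the parity condition $u(x+1/2) = e^{i\pi/2}u(x)$ shows that $e^{in\pi x} \in L^2_{\pi,\ev}$ precisely when $n \equiv 1 \bmod 4$. Hence at $\epsi = 0$ one may take $(\phi_+^\star,\phi_-^\star) = (e^{in\pi x}, e^{-in\pi x})$ when $n \equiv 1 \bmod 4$ and the swapped pair when $n \equiv 3 \bmod 4$. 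Analytic perturbation theory --- available because $n^2\pi^2$ is separated by an $O(1)$ gap from the other $L^2_\pi$-eigenvalues of $D_x^2$ --- extends these to a Dirac eigenbasis $\phi_\pm^\star(\epsi) = \phi_\pm^\star|_{\epsi=0} + O_{L^2_\pi}(\epsi)$ for small $\epsi$.

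\textbf{Effective Hamiltonian and verification of $\oA, \oB$.} I verify the two hypotheses jointly by Feshbach reduction at $\xi = \pi$: restrict $D_x^2 + V + sW_t$ to the two-dimensional Dirac eigenspace. By Lemma \ref{lem:2z}, the resulting effective Hamiltonian has eigenvalues
\begin{equation*}
E_\star(\epsi) \pm s\,|\var(t)| + \OO\!\big(s^2\epsi^2\big), \qquad \var(t) = \lr{\phi_-^\star, W_t \phi_+^\star},
\end{equation*}
and a Fourier computation with the $\epsi = 0$ eigenvectors yields
\begin{equation*}
\var(t) = \begin{cases} \epsi\, e^{-int}\, \overline{\hWW_n} + O(\epsi^2), & n \equiv 1 \bmod 4, \\ \epsi\, e^{int}\, \hWW_n + O(\epsi^2), & n \equiv 3 \bmod 4. \end{cases}
\end{equation*}
Since $|\hWW_n| > 0$, one gets $|\var(t)| \geq \epsi|\hWW_n|/2$ uniformly in $t$ for small $\epsi$, which is $\oB$. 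Taking the constant witness $E(s,t) \equiv E_\star(\epsi)$: for $\epsi$ small enough that the first-order splitting $2s|\var(t)| \geq s\epsi|\hWW_n|$ dominates the $O(s^2\epsi^2)$ Feshbach remainder uniformly in $s \in (0,1]$, the two eigenvalues strictly enclose $E_\star$; the remaining $L^2_\pi$-eigenvalues stay $O(1)$-away from $E_\star$ by standard perturbation theory. This gives $\oA$. The auxiliary hypothesis $\hVV_{n-1} \neq 0$ (for $n \geq 3$) opens the adjacent $(n-1)$-th gap at $\xi = 0$ in the $L^2$-spectrum of $D_x^2 + V$, preventing any lower band from intruding into the $n$-th gap uniformly in $(\xi, s, t)$.

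\textbf{Winding number.} The leading term of $\var$ winds $\mp n$ times around $0$ --- that is, $-n$ when $n \equiv 1 \bmod 4$ and $+n$ when $n \equiv 3 \bmod 4$. An inspection of the signs yields $(-1)^{(n+1)/2}n$ in both cases. Since $m \in \Z$ is integer-valued and varies continuously with $\var$ on the locus where $\var$ does not vanish, this value is stable for all $\epsi \in (0,\epsi_0]$.

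\textbf{Conclusion and main obstacle.} Invoking Theorem \ref{thm:1} (equivalently \ref{thm:2}) computes the bulk/edge index as $(-1)^{(n-1)/2}m = (-1)^n n = -n$, using that $n$ is odd. The delicate step is the uniformity in $s \in (0,1]$ of $\oA$: the first-order splitting $s\epsi|\hWW_n|$ is linear in $s$ and competes with the $s^2\epsi^2$ Feshbach remainder, so $\epsi$ must be chosen below a threshold determined by $|\hWW_n|$ alone, independent of $s$.
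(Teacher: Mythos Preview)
Your proof is correct and follows essentially the same route as the paper's: approximate the Dirac eigenbasis by $e^{\pm in\pi x}$ (swapped according to $n \bmod 4$), compute $\var(t)$ by a Fourier calculation, and verify $\oA$ via the two-level reduction giving $L^2_\pi$-eigenvalues $E_\star \pm s|\var(t)| + O(s^2\epsi^2)$ --- the paper phrases this last step with explicit quasimodes and Lemma~\ref{lem:2y} rather than Feshbach language, but the content is identical. One aside: your explanation of the role of $\hVV_{n-1} \neq 0$ (opening the $(n-1)$-th gap at $\xi=0$ to block band intrusion) is not the mechanism, since $\oA$ concerns only the $L^2_\pi$-spectrum, where the other eigenvalues already sit at $\ell^2\pi^2 + O(\epsi)$; in fact the paper's own proof never invokes that hypothesis.
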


This shows that small potentials $V, \ W$ exhibit the same topological features as the dislocation problem of \cite{Ko1,DPR,HK}: $n$ eigenvalues effectively flow in the $n$-th gap as $t$ goes from $0$ to $2\pi$.\footnote{The sign discrepancy in comparison with \cite{HK} comes from the fact that the dislocation arises on the left there, while here we consider dislocations on the right.} The proof of Lemma \ref{lem:1f} is a perturbative argument. We postpone it to Appendix \ref{app:7}.

We now present a topologically different setting: we construct a pair $(V,W)$ of potentials satisfying the hypothesis $\oA$ and $\oB$ such that the winding number \eqref{eq:1z} in the $n$-th gap is not equal to $n$.

\begin{lem}\label{lem:2n} Let $n, m$ be two odd integers with $n > 0$ and $|m| \neq n$. Define
\begin{equation}
V(x) \de \epsi^2 \cos\big(2\pi(n-m)x\big) + \epsi^3 \cos\big(2\pi (n-1) x\big), \ \ W(x) \de 2\epsi^4 \cos(2 \pi m x).
\end{equation}
If $\epsi$ is sufficiently small then the pair $(V,W)$ satisfies \eqref{eq:9v}, $\oA$ and $\oB$. Furthermore, the winding number \eqref{eq:1z} for the $n$-th gap equals $(-1)^{\frac{n+1}{2}} \cdot m$ and the bulk/edge index equals $-m$.
\end{lem}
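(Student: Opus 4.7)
The plan is to verify the hypotheses of Theorems \ref{thm:1}--\ref{thm:2} for small $\epsi$ by direct perturbation theory and then read off the bulk/edge index. The periodicity conditions \eqref{eq:9v} are immediate: $n-m$ and $n-1$ are both even (since $n,m$ are odd), so $V$ is $1/2$-periodic; and the oddness of $m$ gives $W(x+1/2) = 2\epsi^4\cos(2\pi mx + m\pi) = -W(x)$. By Lemma \ref{lem:1u}, the operator $D_x^2+V$ then has a Dirac point at $(\pi, E_\star)$ with $E_\star = n^2\pi^2 + O(\epsi^2)$.

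The main computation is Fourier-perturbative. At $\epsi=0$, the $L^2_{\pi,\mathrm{ev}}$-component of the $n$-th degenerate eigenspace of $D_x^2$ is spanned by $e^{ip_+\pi x}$ with $p_+ = \sgn(\nu_\star)\cdot n$ (cf.\ \eqref{eq:9t}). For $\epsi>0$, expand $\phi_+^\star = \sum_{q} c_q\,e^{iq\pi x}$ (with $q$ running over integers congruent to $1$ modulo $4$ and $c_{p_+}=1$); the eigenvalue equation $(D_x^2+V)\phi_+^\star = \lambda_+\phi_+^\star$ becomes a Fourier recursion whose energy denominators $p_+^2 - q^2$ are all nonzero because $V$ preserves parity sectors. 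Iterating yields $c_q = O(\epsi^2)$ for $q\neq p_+$, and for the distinguished mode $q_\star \de p_+ - \sgn(\nu_\star)\cdot 2(n-m)$, the direct $V_2$-coupling to $c_{p_+}=1$ gives
\begin{equation}
c_{q_\star} = \frac{\epsi^2}{8m(n-m)\pi^2} + O(\epsi^3),
\end{equation}
since $p_+^2 - q_\star^2 = 4m(n-m)\neq 0$.

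Using $\phi_-^\star = \overline{\phi_+^\star}$, I would then compute $\var(t) = \int_0^1 W_t(\phi_+^\star)^2\,dx$ by Fourier. Since $W_t$ has Fourier modes only at $\pm m$, the integral picks out pairs $(q,q')$ with $q+q' = \pm 2m$. The pair $(p_+,q_\star)$ (and its swap) satisfies $p_+ + q_\star = \sgn(\nu_\star)\cdot 2m$ and thus contributes $\frac{\epsi^6}{4m(n-m)\pi^2}\,e^{-\sgn(\nu_\star)\cdot imt}$ at leading order. A short combinatorial check -- using $|m|\neq n$ and $m$ odd -- rules out all other pairs at order $\epsi^6$, both from iterated $V_2$-shifts and from $V_3$-populated modes at $p_+ \pm 2(n-1)$. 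Hence $\var(t) = \frac{\epsi^6}{4m(n-m)\pi^2}\,e^{-\sgn(\nu_\star)\cdot imt} + O(\epsi^7)$; in particular $\oB$ holds and the winding number is $-\sgn(\nu_\star)\cdot m = (-1)^{(n+1)/2}\cdot m$. Assumption $\oA$ is verified by taking $E(s,t)\equiv E_\star$: Lemma \ref{lem:2z} says $W_t$ is off-diagonal in the Dirac basis, so the $n$-th gap of $D_x^2+V+sW_t$ opens symmetrically about $E_\star$ with width $\Theta(s\epsi^6)$, staying well-separated from the rest of the spectrum uniformly for $s\in(0,1]$, $t\in\R$. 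Theorems \ref{thm:1}--\ref{thm:2} then yield the bulk/edge index $(-1)^{(n-1)/2}\cdot(-1)^{(n+1)/2}\cdot m = (-1)^n m = -m$.

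The main obstacle is the combinatorial exclusion step: one must check that no combination of $V_2$-, $V_3$-, or iterated perturbation shifts produces a pair $(q,q')$ with $c_qc_{q'}$ of order $\epsi^2$ and $q+q'=\pm 2m$ beyond the pair $(p_+,q_\star)$ and its swap. Each such would-be resonance reduces to an integer equation such as $2\sgn(\nu_\star)\,n \pm 2(n-1) = \pm 2m$ or $\pm 4(n-m) = \pm 2m$, all of which are eliminated by the hypotheses $|m|\neq n$ and $m,n$ odd.
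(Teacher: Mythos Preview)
Your approach is essentially the same as the paper's: both compute the Dirac eigenbasis to first nontrivial order in $\epsi$, then evaluate $\var(t)$ as a Fourier integral and read off its winding number; and both verify $\oA$ by showing the $n$-th gap of $D_x^2+V+sW_t$ opens to width $\sim 2s|\var(t)|$ about $E_\star$. The one methodological difference is that the paper writes down an explicit three-mode quasimode
\[
f_+(x) = e^{i\pi n x}\Big(1 - \tfrac{\epsi^2}{8\pi^2(m-n)}\big(\tfrac{e^{2i\pi(m-n)x}}{m} + \tfrac{e^{-2i\pi(m-n)x}}{m-2n}\big)\Big)
\]
and invokes Lemma~\ref{lem:2y} to conclude $(\phi_+^\star,\phi_-^\star)=(f_+,\ove{f_+})+O_{L^2_\pi}(\epsi^3)$; this sidesteps your combinatorial exclusion step entirely, since with only three modes in $f_+$ the expansion of $W_t f_+^2$ is a finite sum one inspects directly. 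Your Fourier-recursion formulation is equivalent but forces you to argue that no other populated mode pairs resonate at order $\epsi^6$; the cases you list are the right ones, and the hypotheses $|m|\neq n$, $m$ odd do eliminate them. Your leading coefficient $\tfrac{\epsi^6}{4\pi^2 m(n-m)}$ differs from the paper's $\tfrac{\epsi^6}{8\pi^2 m(n-m)}$ by a factor of $2$ (the paper appears to drop the $2$ in $W=2\epsi^4\cos(2\pi m x)$ during the computation), but this is immaterial to the winding number and the lemma's conclusion.
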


This result contrasts fundamentally with the dislocation problem studied in \cite{Ko1,DPR,HK}. There, $V \equiv 0$: $n$ eigenvalues effectively flow in the $n$-th gap. Here, $|m|$ eigenvalues effectively flow in the $n$-th gap. This demonstrates that our class of operators is topologically richer than that of \cite{Ko1,DPR,HK} -- hence Theorem \ref{thm:1} \textit{needs} some assumptions to hold. From the physics point of view, this means that the periodic background (modeled by $V$) may fundamentally affect topological phases of matter. We emphasize that this example is not exceptional: small perturbations of $(V,W)$ do not change the winding number \eqref{eq:1z}. Lemma \ref{lem:2n} is somewhat reminiscent of \cite{KM}.

The proof of Lemma \ref{lem:1f} is also a perturbative argument. We postpone it to Appendix \ref{app:7}. We conclude with the following conjecture:

\begin{conj} Let $m_1, m_3, \dots$ be an infinite sequence of odd integers, labeled by odd positive integers. For every odd $n$, there exist $(V,W)$ satisfying \eqref{eq:9v}, $\oA$ and $\oB$ for the $n$-th gap, with winding number \eqref{eq:1z} equal to $(-1)^{\frac{n-1}{2}} \cdot m_n$.
\end{conj}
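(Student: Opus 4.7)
The conjecture asks for a single pair $(V, W)$ realizing any prescribed sequence $(m_n)_{n \text{ odd}}$ of winding numbers across all odd gaps simultaneously. My plan is to iterate and superpose the constructions of Lemmas \ref{lem:1f} and \ref{lem:2n}, allocating to each odd $n$ its own block of resonant Fourier modes, with a hierarchy of smallness parameters sharp enough that the blocks decouple.

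Concretely, for each odd $n$ with $|m_n| \neq n$, assign the even frequencies $n - m_n$ and $n-1$ to $V$ and the odd frequency $m_n$ to $W$. Take
\begin{equation*}
V(x) = \sum_{n \geq 1 \text{ odd}} \Big[ \epsilon_n^2 \cos(2\pi(n-m_n)x) + \epsilon_n^3 \cos(2\pi(n-1)x) \Big], \quad W(x) = \sum_{n \geq 1 \text{ odd}} 2\epsilon_n^4 \cos(2\pi m_n x),
\end{equation*}
with super-exponentially decaying $\epsilon_n$ (e.g.\ $\epsilon_n = 2^{-n!}$). This preserves \eqref{eq:9v} because all $V$-frequencies are even and all $W$-frequencies are odd, and gives $V, W \in C^\infty(\R, \R)$. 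The boundary cases $m_n = \pm n$ have to be grafted in by hand, using the Lemma \ref{lem:1f} resonance (which yields $m_n = -n$) and a mirror variant producing the opposite sign (for $m_n = +n$), since neither Lemma \ref{lem:1f} nor Lemma \ref{lem:2n} covers $m_n = +n$ directly.

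The verification is then performed gap by gap. Fix an odd $n$ and compare $D_x^2 + V + sW_t$ with the single-block operator $D_x^2 + V^{(n)} + sW^{(n)}_t$ retaining only the terms indexed by $n$ in the sums above. By Lemma \ref{lem:1m}, the $n$-th dispersion curve of the full operator differs from that of the single-block operator by at most $O\bigl(\sum_{n' \neq n} \epsilon_{n'}^2\bigr)$ in sup-norm, uniformly in $(s, t) \in [0,1] \times \Tt^1$. With the hierarchy $\epsilon_{n+2} \ll \epsilon_n^{100}$, the $n$-th gap, of width $\gtrsim \epsilon_n^2$ by Lemma \ref{lem:2n}, stays open throughout, giving $\oA$. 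A parallel perturbative analysis at the Dirac point shows that the Dirac eigenbasis $(\phi_+^\star, \phi_-^\star)$ of $V$ is a small perturbation of the pure-block eigenbasis, hence $\vartheta(t) = \epsilon_n^4 c_n e^{\pm im_n t} + (\text{strictly smaller})$ with $c_n \neq 0$, yielding $\oB$ and the prescribed winding $(-1)^{(n-1)/2} m_n$ for the $n$-th gap.

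The main obstacle is making the inter-block decoupling quantitative. A priori, lower-$n'$ blocks (with \emph{larger} $\epsilon_{n'}$) could close the $n$-th gap or shift the winding function $\vartheta(t)$, because they are not uniformly small. The mechanism that saves the argument is that each block's Fourier support is disjoint from every other block's, and the degenerate eigenvalue $(n\pi)^2$ controlling the $n$-th Dirac point is resonant only with its own block's frequencies; the lower-$n'$ blocks then affect the $n$-th gap only through second-order (non-resonant) perturbation. Turning this intuition into a rigorous inverse-bandwidth bound, uniformly over $(s, t) \in [0, 1] \times \Tt^1$, is the technical heart of the proof. A secondary difficulty is the honest construction at $m_n = +n$, which requires a distinct resonance scheme and is presumably the main reason the statement is offered as a conjecture rather than a theorem.
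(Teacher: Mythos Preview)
This statement is presented in the paper as a \emph{conjecture}: the paper offers no proof, so there is nothing to compare your proposal against. What you have written is a strategy sketch with acknowledged gaps, not a proof; you say as much when you call the quantitative decoupling ``the technical heart'' and leave it undone.

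Two concrete problems with the sketch as written. First, the $n$-th gap produced by the Lemma~\ref{lem:2n} mechanism has width of order $\epsilon_n^6$, not $\epsilon_n^2$: it equals $2s|\var(t)|$ with $|\var(t)|\sim\epsilon_n^6$ (see the last displayed estimates in the proof of Lemma~\ref{lem:2n}). The decoupling you need is therefore several orders more delicate than your estimates via Lemma~\ref{lem:1m} suggest. Second, and more seriously, the assertion that the blocks have disjoint Fourier support is false in general. The conjecture allows an arbitrary odd sequence $(m_n)$, so nothing forbids $|m_{n'}|=n$ for some odd $n'<n$. In that case the $n'$-th $W$-block contains $\cos(2\pi nx)$ with amplitude $\epsilon_{n'}^4\gg\epsilon_n^6$; this couples the unperturbed $n$-th Dirac modes $e^{\pm in\pi x}$ \emph{directly} and forces $\var(t)\sim\epsilon_{n'}^4\,e^{\mp int}$ at leading order, overwriting the winding number you intended to install. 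This interference is resonant and first-order, so your ``non-resonant second-order'' heuristic does not apply. Similar collisions can occur among the $V$-frequencies (e.g.\ $n'-m_{n'}=n-m_n$) and among the $W$-frequencies (e.g.\ $m_{n'}=m_n$). Any superposition scheme would have to route around all such collisions simultaneously, and it is far from clear that this is possible for every sequence $(m_n)$; this, together with the missing case $m_n=n$ that you also flag, is presumably why the statement is left as a conjecture.
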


\section{Reduction to a Dirac operator and proof of Theorem \ref{thm:1}}\label{sec:2}

We prove here Theorem \ref{thm:1}. We briefly recall the context. We analyze the family
\begin{equation}
\PP(t) = D_x^2 + V + \chi_+ W + \chi_- W_t + F, \ \ \ \ t\in [0,2\pi].
\end{equation}
Above, $V$ and $W$ are smooth, real-valued periodic potentials satisfying \eqref{eq:9v}; $W_t(x) = W(x+t/(2\pi))$; $\chi_\pm$ satisfy \eqref{eq:9p}; and $F$ is a compact operator  from $H^2$ to $L^2$. We fix $n$ an odd integer and let $(\pi,E_\star)$ be the $n$-th Dirac point of the unperturbed operator $D_x^2+V$ -- see \S\ref{sec:2.4}. This comes with a Dirac eigenbasis, i.e. a normalized pair $(\phi_+^\star,\phi_-^\star)$ that solves the eigenvalue problem
\begin{equation}
\big(D_x^2 + V - E_\star\big) \phi_\pm^\star = 0, \ \ \ \ \phi_\pm^\star \in L^2_{\pi,\ev}, \ \ \phi_-^\star \in L^2_{\pi,\od}.
\end{equation}

We review the assumptions $\oA$ and $\oB$. For every $t$, the $n$-th spectral gap of $D_x^2 + V + W_t$ is open. When the strength of the dislocation decreases to $0$, this gap persists and shrinks to $\{E_\star\}$  -- see $\oA$. The non-degeneracy condition 
\begin{equation}
t \in [0,2\pi] \ \ \Rightarrow \ \ 
\var(t) \de \blr{\phi_-^\star,W_t\phi_+^\star} \neq 0
\end{equation}
holds -- see $\oB$. Theorem \ref{thm:1} asserts that 
\begin{equation}\label{eq:9r}
\Sf_{\Tt^1} \big(\PP-E(1,\cdot) \big) = (-1)^{\frac{n-1}{2}} \cdot \dfrac{1}{2\pi} \int_0^{2\pi} \dfrac{\var'(t)}{\var(t)} dt.
\end{equation}

We organize the proof as follows. We review the notion of spectral flow in \S\ref{sec:4.0}.  In \S\ref{sec:3.3a}, we deform the family $t \mapsto \PP(t)-E(1,t)$ to a family $t \mapsto \PP_\delta(t)-E_\star$ exhibiting a \textit{small and adiabatic} dislocation instead. $\oA$ ensures that the essential spectral gap remains open. The operator $\PP_\delta(\pi)$ has been studied in \cite{FLTW1,FLTW2,DFW}. In \cite{DFW}, we show that the eigenvalues of $\PP_\delta(t)$ in the gap containing $E_\star$ are \textit{all precisely} seeded by those of an effective Dirac operator,
\begin{equation}
\DDi_\star \de \nu_\star \bst  D_x -   \chi_-\bss +\chi_+ \matrice{0 & \ove{\var_\star} \ \\ \var_\star & 0}, \ \ \ \ \nu_\star \de \blr{\phi_+^\star,D_x\phi_-^\star} \neq 0.
\end{equation}
The analysis applies similarly to $\PP_\delta(t)$ and produces a Dirac operator $\DDi(t)$. A consequence is that $\PP-E(1,\cdot)$ and $\DDi$ have the same spectral flow -- see \S\ref{sec:3.3}.

In other words, we found a gap-preserving homotopy that starts at $t \mapsto \PP(t)-E(1,t)$ -- these are second order  differential operators, periodic in the bulk -- and ends at $t \mapsto \DDi(t)$ -- these are \textit{first order} differential operators \textit{constant} in the bulk. The integers $\Sf_{\Tt^1}(\DDi)$ and $\Sf_{\Tt^1}(\PP-E(1,\cdot))$ are equal because the spectral flow is invariant along gap-preserving homotopies.

 Because the coefficients of $\{\DDi(t)\}_{t\in [0,2\pi]}$ are constant in the bulk, a simple computation of its spectral flow is possible.
The winding number $m$ displayed in the RHS of \eqref{eq:9r} emerges via another homotopy that transforms $\var(t)$ to $-\var_\star e^{imt}$ -- see \S\ref{sec:3.5b}. In \S\ref{sec:3.6a}, we calculate explicitly the spectrum of a Dirac operator with piecewise constant coefficients. It leads to the value of $\Sf_{\Tt^1}(\DDi)$, completing the proof of Theorem \ref{thm:1}.

\subsection{Spectral flow}\label{sec:4.0} We review the definition of spectral flow, following \cite[\S4]{Wa}. Let $\HH$ be a Hilbert space and $\{T(t)\}_{t\in [0,2\pi]}$ be a family of selfadjoint Fredholm operators on $\HH$ with $t$-independent domains, continuous for the gap distance
\begin{equation}
d(T,T') \de \left\| (T+i)^{-1} - (T'+i)^{-1} \right\|_\HH.
\end{equation}
Assume that $T(0) = T(2\pi)$; and that $0$ is not in the essential spectrum of $T(t)$ for every $t\in [0,2\pi]$. There exist $0 = t_1 < \dots < t_N = 2\pi$ and $a_1, \dots, a_N > 0$ such that
\begin{equation}\label{eq:1s}
t \in [t_j,t_{j+1}] \ \Rightarrow \ \pm a_j \notin \Sigma_\HH\big(T(t)\big) \text{ and } \1_{[-a_j,a_j]}\big(T(t)\big) \text{ has finite rank.}
\end{equation}
This in particular implies that for $t \in [t_j,t_{j+1}]$, $\1_{[-a_j,a_j]}\big(T(t)\big)$ has constant rank. Figure \ref{fig:11} is a picture representing \eqref{eq:1s} on an example.
The spectral flow is
\begin{equation}
\Sf_{\Tt^1}(T) \de \sum_{j=1}^{N-1} \dim \1_{[0,a_j]}\big(T(t_{j+1})\big) - \dim \1_{[0,a_j]}\big(T(t_j)\big).
\end{equation}
It is independent of the choice of $t_j$, $a_j$ that satisfy \eqref{eq:1s}, see \cite[Lemma 4.1.3]{Wa}.

\begin{center}
\begin{figure}
{\caption{The gray region represents the essential spectrum of a family of operators $\{T(t)\}$. The red dotted curves represent its eigenvalues in the gap. We partition $[0,2\pi]$ in sub-intervals so that $T(t)$ has a constant number of eigenvalues in the gap, in each sub-interval. Above, this number is $0$ in $[0,t_1]$; $1$ in $[t_1,t_2]$; and $0$ in $[t_2,2\pi]$. The spectral flow~is~$1$.}\label{fig:11}}
{\includegraphics{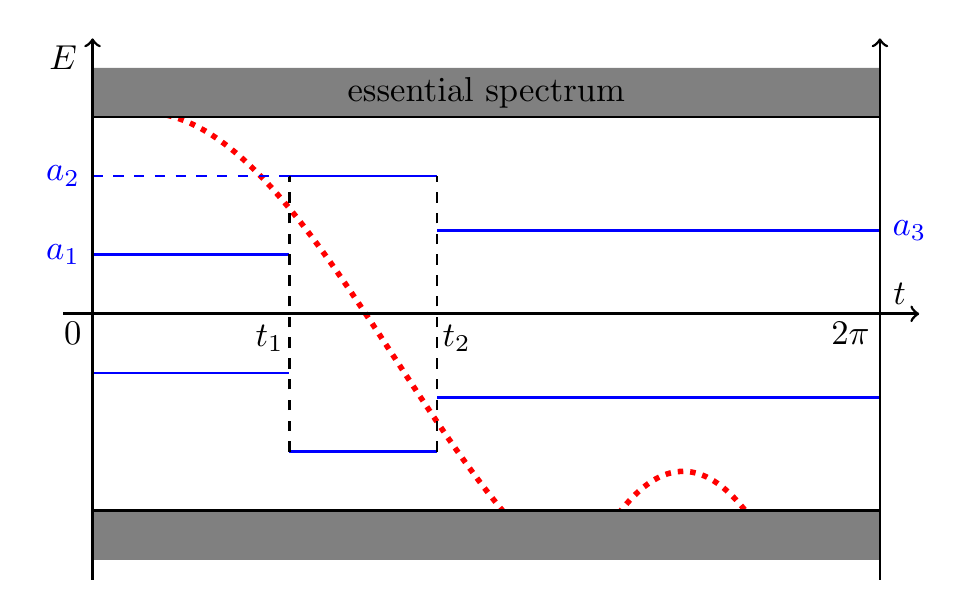}}
\end{figure}
\end{center}
\vspace*{-.9cm}

The spectral flow is invariant under homotopy. Specifically, assume given for every $s \in [0,1]$ a periodic family $\{T_s(t)\}_{t\in [0,2\pi]}$ of selfadjoint Fredholm operators on $\HH$ with domains independent on $(s,t)$, such that $0$ is not in the essential spectrum of $T_s(t)$. If $(s,t) \in [0,1] \times [0,2\pi] \mapsto T_s(t)$ is continuous for the gap distance then 
\begin{equation}
s \in [0,1] \mapsto \Sf_{\Tt^1}(T_s)
\end{equation}
is constant -- see \cite[Theorem 4.2.4]{Wa}.

\subsection{Homotopy}\label{sec:3.3a} We start the proof of Theorem \ref{thm:1}. For the rest of \S\ref{sec:2}, we fix functions $V$, $W$, $\chi_\pm$, $F$ satisfying the setting of \S\ref{sec:1.1}, together with an odd positive integer $n$. We assume that the pair $(V,W)$ satisfies $\oA$ and $\oB$ for the $n$-th gap. 

We want to compute the edge index: $\Sf_{\Tt^1}(\PP-E(1,\cdot))$. For $s \in (0,1]$, we introduce
\begin{equation}
\PP_s(t) \de D_x^2 + V + s \chi_-(s \ \cdot) W + s \chi_+(s \ \cdot) W_t +\te(s) F
\end{equation}
where $\te : [0,1] \rightarrow \R$ is a smooth function with value $0$ near $0$ and $1$ near $1$; and $\chi_\pm(s \ \cdot) : x \mapsto \chi_\pm(sx)$. By assumption $\oA$, $E(s,t)$ is not in the essential spectrum of $\PP_s(t)$ for any $(s,t) \in (0,1] \times [0,2\pi]$.

Hence $\Sf_{\Tt^1}(\PP-E(1,\cdot)) = \Sf_{\Tt^1}(\PP_s-E(s,\cdot))$ as long as the family 
\begin{equation}\label{eq:1r}
(s,t) \in (0,1] \times \R \mapsto (\PP_s(t)-E(s,t)+i)^{-1} : \ L^2 \rightarrow L^2
\end{equation}
is continuous for the gap distance. To verify this, we use the resolvent identity:
\begin{equations}
\big(\PP_s(t)-E(s,t)+i\big)^{-1} - \big(\PP_{s'}(t')-E(s',t')+i\big)^{-1}
\\
= \big(\PP_s(t)-E(s,t)+i\big)^{-1} \cdot \big(\PP_s(t) - \PP_{s'}(t') + E(s',t')-E(s,t) \big) \cdot \big( \PP_{s'}(t')-E(s',t')+i \big)^{-1}.
\end{equations}
The map $(s,t) \mapsto E_s(t)$ is continuous and $\|\PP_s(t) - \PP_{s'}(t')\|_{L^2}$ goes to $0$ when $(s',t')$ goes to $(s,t)$. Because of the spectral theorem, 
\begin{equation}
\left\|\big(\PP_s(t)-E(s,t)+i\big)^{-1}  \right\|_{L^2} \leq 1, \ \ \ \ \left\|\big( \PP_{s'}(t')-E(s',t')+i \big)^{-1} \right\|_{L^2} \leq 1.
\end{equation}
Therefore the map \eqref{eq:1r} is continuous for the gap distance. Hence
\begin{equation}\label{eq:2k}
\Sf_{\Tt^1}\big(\PP-E(1,\cdot)\big) = \Sf_{\Tt^1}\big(\PP_\delta-E(\delta,\cdot)\big) = \Sf_{\Tt^1}\big(\PP_\delta-E_\star\big).
\end{equation}
In the last equality, we used $E(\delta,\cdot) = E_\star$ when  $\delta$ is sufficiently small.
In contrast with $\PP$, the dislocation in $\PP_\delta$ is small and adiabatic. This is the setting of the papers \cite{FLTW1,FLTW2,DFW}.

\begin{rmk}\label{rem:1} There exists $\epsi_0 > 0$ such that the identity \eqref{eq:2k} persist if $F$ is replaced by any selfadjoint family $\{F_1(t)+F_2(t)\}$, where
\begin{itemize}
\item $F_1(t)$ and $F_2(t)$ are bounded $H^2 \rightarrow L^2$, continuous and $2\pi$-periodic in $t$.
\item For every $t$, $F_1(t) : H^2 \rightarrow L^2$ with $\|F_1(t) \|_{L^2 \rightarrow H^2} \leq \epsi_0$;
\item For every $t$, $F_2(t) : H^2 \rightarrow L^2$ is compact.
\end{itemize}
In particular, Theorem \ref{thm:1} holds for a wider class of perturbations than presented in \S\ref{sec:1.1}. We leave this verification to the reader.\end{rmk}

\subsection{Review of \cite{DFW}}\label{sec:3.3} We recall that $V, W$ satisfy \eqref{eq:9v} and that $\oB$ holds: $\var(t) \neq 0$ for every $t \in \R$. The paper \cite{DFW} studies defect states of the operator
\begin{equation}
\PP_{\delta,\star} = D_x^2 + V + \delta \chi_-(\delta\cdot) W - \delta \chi_+(\delta\cdot)  W.
\end{equation}
Let $(\pi,E_\star)$ be a Dirac point of $D_x^2+V$. Assume that the number $\var_\star = \var(\pi) = \lr{\phi_-^\star,W\phi_+^\star}$ associated to $(\pi,E_\star)$ is non-zero. A multiscale approach due to \cite{FLTW2} produces the Dirac operator 
\begin{equation}
\DDi_\star = \nu_\star \bst  D_x + \bss \kappa, \ \ \ \ \bss \de \matrice{0 & \ove{\var_\star} \ \\ \var_\star & 0}, \ \ \kappa \de \chi_+-\chi_-.
\end{equation}
This operator has essential spectrum outside $(-\var_F,\var_F)$, where $var_F = |\var_\star|$. In this gap, $\DDi_\star$ has an odd number of simple eigenvalues $\{ \mu_j \}_{j \in [0, 2M_\star]}$ with
\begin{equation}
-\var_F < \mu_0 < \dots < \mu_{M_\star-1} < \mu_{M_\star} = 0 < \mu_{M_\star+1} < \dots < \mu_{2M_\star} < \var_F,
\end{equation}
see \cite[Theorem 3]{DFW}. The number $M_\star$ can be arbitrarily large; see \cite{LWW}. Zero is a robust eigenvalue of $\DDi_\star$: it persists even when $\kappa$ is modified locally. 
The main result of \cite{DFW} is:

\begin{theorem}\cite[Theorem 1 and Corollary 1]{DFW}\label{thm:4} For any $\var_\sharp \in (\var_{2M_\star},\var_F)$, there exists $\delta_0 > 0$ such that for $\delta \in (0,\delta_0)$, the following holds. The spectrum of $\PP_{\delta,\star}$ in $(E_\star-\var_\sharp \delta, E_\star + \var_\sharp \delta)$ consists exactly of $2M_\star+1$ eigenvalues, given by
\begin{equation}
E_\star + \delta \mu_j + O\left(\delta^2\right).
\end{equation}
\end{theorem}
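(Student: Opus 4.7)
The statement has two parts: (i) each of the $2M_\star+1$ simple eigenvalues $\mu_j$ of $\DDi_\star$ in $(-\var_F,\var_F)$ produces an eigenvalue of $\PP_{\delta,\star}$ within $O(\delta^2)$ of $E_\star+\delta\mu_j$; (ii) these exhaust the spectrum of $\PP_{\delta,\star}$ in $(E_\star-\var_\sharp\delta,E_\star+\var_\sharp\delta)$. Part (i) is the standard multiscale construction of Fefferman--Lee-Thorp--Weinstein; part (ii), the completeness, is the novel content and requires a Grushin/Feshbach reduction controlled \emph{uniformly} on the whole spectral interval.

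\textbf{Part (i): quasimodes.} For each eigenpair $(\mu_j,\alpha^j=(\alpha_+^j,\alpha_-^j))$ of $\DDi_\star$ on $L^2(\R;\C^2)$, form the two-scale wave packet
\begin{equation}
\psi^\delta_j(x) \de \delta^{1/2}\bigl[\alpha_+^j(\delta x)\,\phi_+^\star(x) + \alpha_-^j(\delta x)\,\phi_-^\star(x)\bigr],
\end{equation}
where the $\delta^{1/2}$ factor $L^2$-normalizes $\psi^\delta_j$ up to $O(\delta)$. Apply $\PP_{\delta,\star}-E_\star-\delta\mu_j$: the term $(D_x^2+V-E_\star)\phi_\pm^\star$ vanishes identically, so the residue is $O(\delta)$ and collects into the Dirac operator $\DDi_\star$ acting on $\alpha^j$. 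Indeed, the cross term $2\delta(D_y\alpha^j)D_x\phi^\star_\pm$ reconstitutes $\nu_\star \bst D_y\alpha^j$ after projection on $(\phi_+^\star,\phi_-^\star)$ (using $\langle \phi_+^\star, D_x\phi_+^\star\rangle=\nu_\star/2$ and the even/odd orthogonality from Lemma \ref{lem:1c}), while the potential term $-\delta\kappa(\delta x)W(x)\psi_j^\delta$ reconstitutes $\bss\kappa$ via Lemma \ref{lem:2z}. Since $\DDi_\star\alpha^j=\mu_j\alpha^j$, the leading residue cancels and $\|(\PP_{\delta,\star}-E_\star-\delta\mu_j)\psi^\delta_j\|_{L^2}=O(\delta^2)$. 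The spectral theorem yields an eigenvalue of $\PP_{\delta,\star}$ within $C\delta^2$ of $E_\star+\delta\mu_j$.

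\textbf{Part (ii): completeness via Grushin reduction.} Define the prolongation $J^\delta:L^2(\R;\C^2)\to L^2(\R)$ sending $(\alpha_+,\alpha_-)$ to $\delta^{1/2}[\alpha_+(\delta\cdot)\phi_+^\star+\alpha_-(\delta\cdot)\phi_-^\star]$ and set $R^\delta=(J^\delta)^*$. Consider, for $E=E_\star+\delta\mu$ with $|\mu|\leq \var_\sharp$, the Grushin system
\begin{equation}
\GG^\delta(\mu) \de \matrice{\PP_{\delta,\star}-E_\star-\delta\mu & J^\delta \\ R^\delta & 0}: L^2(\R)\oplus L^2(\R;\C^2) \to L^2(\R)\oplus L^2(\R;\C^2).
\end{equation}
The plan is to show $\GG^\delta(\mu)$ is invertible with Schur complement (effective Hamiltonian) $\EE^\delta(\mu)$ satisfying $\EE^\delta(\mu)=-(\DDi_\star-\mu)+O_{L^2(\R;\C^2)\to L^2(\R;\C^2)}(\delta)$. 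Then $E_\star+\delta\mu\in\Sigma(\PP_{\delta,\star})$ iff $0\in\Sigma(\EE^\delta(\mu))$, with matching multiplicities; as $\delta\to 0$ the eigenvalues of $\EE^\delta(\mu)$ converge to those of $\DDi_\star-\mu$, giving exactly $2M_\star+1$ eigenvalues by \cite[Theorem 3]{DFW}.

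\textbf{Main obstacle.} The hard step is proving invertibility of $\GG^\delta(\mu)$ and the $O(\delta)$ error in $\EE^\delta(\mu)$, uniformly in $\mu\in[-\var_\sharp,\var_\sharp]$. This requires a resolvent estimate for the restriction of $\PP_{\delta,\star}-E_\star-\delta\mu$ to the orthogonal complement of the range of $J^\delta$. Away from $\xi=\pi$ in the Floquet variable, the spectrum of $P_0=D_x^2+V$ is bounded away from $E_\star$ by a fixed amount, and the perturbation $-\delta\kappa(\delta\cdot)W$ shifts this by only $O(\delta)$; near $\xi=\pi$, one Taylor-expands the Bloch family $P_0(\xi)$ to first order (where $\nu_\star$ enters) and the second-order correction to $\EE^\delta(\mu)$ is extracted by a Feshbach formula. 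The key technical point is that the remainders generated by the slow scale $\delta x$ interact with the fast Bloch scale only through $J^\delta$ and $R^\delta$, whose norms are controlled by the $H^1$-norm of the envelope; combined with the a priori $H^1$-control on eigenfunctions of $\DDi_\star$ (which holds because $\mu_j$ stays away from the continuous spectrum at $\pm\var_F$, hence from $\pm\var_\sharp$), this closes the estimate and establishes both the bijection between spectra and the $O(\delta^2)$ precision on the eigenvalues.
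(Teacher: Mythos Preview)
The paper does not prove Theorem~\ref{thm:4}. It is quoted verbatim from \cite{DFW} (Theorem~1 and Corollary~1 there); the present paper only reviews the statement in \S\ref{sec:3.3} and then states the $t$-dependent analog (Theorem~\ref{thm:3}) with the explicit remark ``We will not prove this result. The proof is identical to that of \cite[Theorem~1 and Corollary~1]{DFW} and we refer to that paper.'' So there is no in-house argument to compare your proposal against.

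That said, your sketch is a faithful outline of the strategy actually carried out in \cite{DFW}: the quasimode construction of Part~(i) is precisely the multiscale ansatz of \cite{FLTW1,FLTW2}, and the completeness argument of Part~(ii) via a Lyapunov--Schmidt/Schur complement (Grushin) reduction to the Dirac operator $\DDi_\star$ is the mechanism by which \cite{DFW} improves on \cite{FLTW2}. The present paper itself uses the same Schur complement machinery for the bulk operator in Lemma~\ref{lem:1b}, which confirms the flavor.

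Two technical points your sketch glosses over, which are where the work in \cite{DFW} lies: first, the map $J^\delta$ you write is not a partial isometry, and $R^\delta=(J^\delta)^*$ does not have the algebraic properties ($R^\delta J^\delta\approx\Id$, etc.) that make the Grushin block matrix directly invertible; one has to correct $J^\delta$ by higher-order terms or work instead with a Feshbach projection adapted to the Bloch decomposition. Second, the uniform resolvent bound on the complement of $\mathrm{Ran}\,J^\delta$ is not a simple perturbation of $P_0$, because the perturbation $\delta\kappa(\delta\cdot)W$ is \emph{not} relatively compact and the energy window has width $O(\delta)$; the argument in \cite{DFW} handles this via a careful parametrix that separates the slow envelope scale from the fast Bloch scale. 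Your last paragraph gestures at this correctly but does not resolve it.
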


See Figure \ref{fig:2}.
This result refines \cite{FLTW2}, which predicted existence of defect states for $\PP_{\delta,\star}$, associated to the eigenvalues of $\DDi_\star$. The main improvement of \cite{DFW} over \cite{FLTW2} is the full identification of eigenvalues of $\PP_{\delta,\star}$ in the gap $(-\var_\sharp,\var_\sharp)$. The perturbative strategy of \cite{FLTW2} could not achieve this characterization, which we will strongly need here.\footnote{The work \cite{DFW} also includes \textit{full rigorous} expansions of eigenpairs in powers of $\delta$, whose terms are constructed recursively via the multiscale approach of \cite{FLTW2}.}

\begin{center}
\begin{figure}
{\caption{The spectrum of $\DDi_\star$ and that of $\PP_{\delta,\star}$ are in bijection via a function $\mu \mapsto E_\star+\delta \mu$ modulo $O(\delta^2)$. The grey region represents the essential spectra, the red dots are eigenvalues of $\DDi_\star$, the blue dots are eigenvalues of $\PP_{\delta,\star}$. Theorem \ref{thm:4} does not apply in the light gray region near $\pm \var_F$, the edges of the essential spectrum of $\PP_{\delta,\star}$.}\label{fig:2}}
{\includegraphics{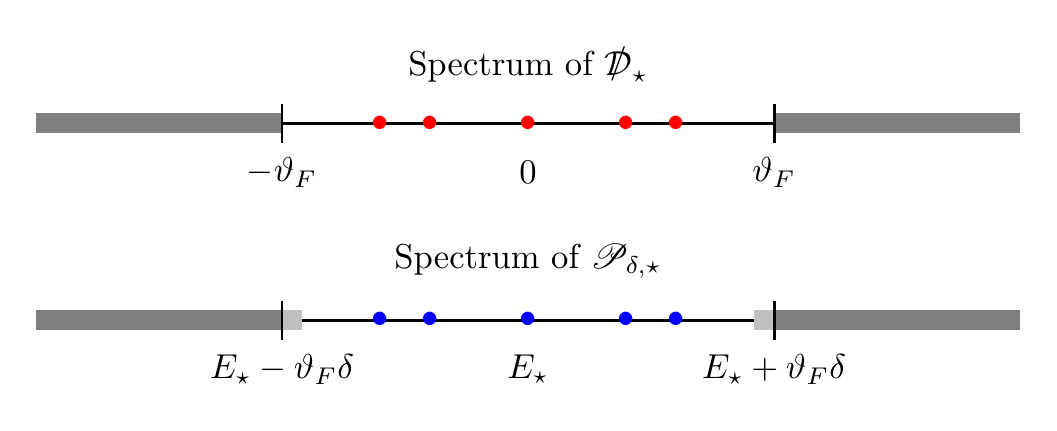}}\end{figure}
\end{center}
\vspace*{-.7cm}

We need a version of Theorem \ref{thm:4} adapted to $\PP_\delta(t)$. We assume that $\oB$ holds: $\var(t) \neq 0$ for every $t$. We introduce the Dirac operator
\begin{equation}
\DDi(t) \de \nu_\star \bst  D_x -   \chi_-\bss +\chi_+ \matrice{0 & \ove{\var(t)} \\ \var(t) & 0}.
\end{equation}
The operator $\DDi(t)$ has essential spectrum equal to 
\begin{equation}
\big(-\infty,-\var_-(t)\big] \cup \big[\var_-(t),+\infty\big), \ \ \ \ \var_-(t) \ \de \min\big(\var_F, |\var(t)| \big).
\end{equation}
Observe that  $\var_-(t)$ is positive by assumption $\oB$.
 Let $\{\mu_j(t)\}_{j \in [0, n(t)]}$ be the ordered eigenvalues of $\DDi(t)$ in $(-\var_-(t),\var_-(t))$. Note that $n(t)$ is finite.

\begin{theorem}\label{thm:3} For any $t \in \R$, $\var_\flat \in \big(-\var_-(t),\mu_0(t)\big)$ and $\var_\sharp \in \big(\mu_{n(t)}(t),\var_-(t)\big)$, there exists $\delta_0 > 0$ such that for $\delta \in (0,\delta_0)$, the following holds. The spectrum of $\PP_\delta(t)$ in $(E_\star+ \var_\flat \delta, E_\star + \var_\sharp \delta)$ consists exactly of $n(t)+1$ eigenvalues, given by
\begin{equation}
E_\star + \delta \mu_j(t) + O\left(\delta^2\right).
\end{equation}
\end{theorem}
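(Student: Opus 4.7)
The plan is to observe that Theorem \ref{thm:3} has the same structure as Theorem \ref{thm:4}, differing only in the periodic profile on the right: $V + \delta W_t$ replaces $V + \delta W_\pi = V - \delta W$. I would therefore re-run the proof of Theorem \ref{thm:4} from \cite{DFW}, treating $t$ as a parameter, and verify that the only substantive change is in the matrix potential appearing on the right of the effective Dirac operator.

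I would first recall the two-scale Grushin reduction of \cite{DFW}. In the eigenvalue window $E_\star + O(\delta)$, one expands approximate defect states as $\psi_\delta(x) \approx a_+(\delta x)\phi_+^\star(x) + a_-(\delta x)\phi_-^\star(x)$, substitutes into $(\PP_\delta(t) - E_\star - \delta\mu)\psi_\delta = 0$, and projects onto the Dirac eigenbasis. Near the Dirac point, Lemma \ref{lem:1c} shows that the two-dimensional block of $D_x^2 + V - E_\star$ acts on $(\phi_+^\star, \phi_-^\star)$ as $\nu_\star \bst (\xi - \pi) + O((\xi-\pi)^2)$, and a Taylor expansion converts this symbol into $\nu_\star \bst D_x$ acting on the slow envelope $(a_+, a_-)$. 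The slowly modulated perturbations $\chi_-(\delta\, \cdot) W$ and $\chi_+(\delta\, \cdot) W_t$ are replaced at leading order by their matrix elements against $(\phi_+^\star, \phi_-^\star)$. By Lemma \ref{lem:2z}, the matrix of $W_t$ is $\matrice{0 & \ove{\var(t)} \\ \var(t) & 0}$; since $W = -W_\pi$, the matrix of $W$ is $-\bss$. Assembling these produces exactly the envelope equation $\DDi(t)(a_+, a_-)^\top = \mu\,(a_+, a_-)^\top$.

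With the effective Dirac operator identified, I would invoke the Grushin-problem machinery of \cite{DFW}: a Lyapunov--Schmidt reduction that isolates the Dirac modes, quantitative resolvent bounds in a spectral window strictly inside the essential gap, and the bijection between discrete eigenvalues of $\PP_{\delta,\star}$ and $\DDi_\star$. These arguments are insensitive to the precise form of the left and right matrix potentials, provided each is bounded with a spectral gap at energy $0$. Inside $(\var_\flat, \var_\sharp) \subset (-\var_-(t), \var_-(t))$, the operator $\DDi(t) - \mu$ is uniformly invertible for $\mu$ outside a fixed neighborhood of $\{\mu_j(t)\}_{j \in [0, n(t)]}$, and the parametrix produces the claimed bijection $\mu_j(t) \mapsto E_\star + \delta \mu_j(t) + O(\delta^2)$, accounting for \emph{all} spectrum of $\PP_\delta(t)$ in $(E_\star + \var_\flat\delta, E_\star + \var_\sharp\delta)$.

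The main obstacle is to confirm that no step of \cite{DFW} secretly uses the involutive symmetry $W_\pi = -W$ of the original dislocation. Two points need checking. First, the essential gap of $\DDi(t)$ is $(-\var_-(t), \var_-(t))$ with $\var_-(t) = \min(\var_F, |\var(t)|)$, which may be asymmetric and strictly narrower than $(-\var_F, \var_F)$; this forces the asymmetric spectral window $(\var_\flat, \var_\sharp)$ in the statement, but enters the analysis only through uniform resolvent bounds for $\DDi(t)$ on that window, which are classical once the gap is quantified. Second, the $O(\delta^2)$ remainder must be controlled with constants independent of $t$ on compacts, which is automatic because all relevant matrix elements and the matrix of $\DDi(t)$ depend smoothly on $t$. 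Once these verifications are in place, the proof of Theorem \ref{thm:4} in \cite{DFW} transfers \emph{mutatis mutandis} to Theorem \ref{thm:3}.
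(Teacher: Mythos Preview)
Your proposal is correct and matches the paper's own treatment: the paper explicitly declines to give a proof, stating that it is identical to that of \cite[Theorem 1 and Corollary 1]{DFW}, and your sketch is precisely an elaboration of what ``identical'' entails here (replacing $-W$ by $W_t$ on the right, obtaining $\DDi(t)$ via Lemma~\ref{lem:2z}, and noting that the Grushin/Lyapunov--Schmidt machinery of \cite{DFW} is insensitive to this change). Your additional checks---that the symmetry $W_\pi = -W$ is never used and that the asymmetric gap $(-\var_-(t),\var_-(t))$ only affects the choice of window---are exactly the verifications implicit in the paper's one-line referral.
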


We will not prove this result. The proof is identical to that of \cite[Theorem 1 and Corollary 1]{DFW} and we refer to that paper. Theorem \ref{thm:3} has the corollary:

\begin{cor}\label{cor:2} Fix $t \in \R$ and $\var_\flat < \var_\sharp$ outside the spectrum of $\DDi(t)$. There exists $\delta_0(t) > 0$ such that for any $\delta \in \big(0,\delta_0(t) \big)$, 
\begin{equations}
E_\star+\delta\var_\flat, \ E_\star+\var_\sharp \delta \notin \Sigma_{L^2}\big( \PP_\delta(t) \big) \ \ \  \text{ and} \\
\dim \1_{[E_\star-\delta\var_\flat,E_\star+\delta\var_\sharp]}\big(\PP_\delta(t)\big) = \dim \1_{[\var_\flat,\var_\sharp]}\left(\DDi(t)\right).
\end{equations}
\end{cor}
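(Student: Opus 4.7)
The corollary is essentially a direct bookkeeping consequence of Theorem \ref{thm:3}. Since $\var_\flat$ and $\var_\sharp$ lie in the resolvent set of $\DDi(t)$, and since the spectrum of $\DDi(t)$ consists of the two half-lines $(-\infty,-\var_-(t)]\cup[\var_-(t),+\infty)$ together with the finite family $\{\mu_0(t),\dots,\mu_{n(t)}(t)\}\subset(-\var_-(t),\var_-(t))$, we automatically have
\[
\var_\flat,\var_\sharp\in\big(-\var_-(t),\var_-(t)\big)\setminus\{\mu_0(t),\dots,\mu_{n(t)}(t)\}.
\]
The plan is to enlarge $[\var_\flat,\var_\sharp]$ to an interval $[\var_\flat',\var_\sharp']$ that fits the hypotheses of Theorem \ref{thm:3}, apply that theorem there, and then read off the information on the sub-interval.

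Concretely, first I would pick $\var_\flat'\in(-\var_-(t),\min(\var_\flat,\mu_0(t)))$ and $\var_\sharp'\in(\max(\var_\sharp,\mu_{n(t)}(t)),\var_-(t))$. Theorem \ref{thm:3} applied to the pair $(\var_\flat',\var_\sharp')$ produces a threshold $\delta_0(t)>0$ such that for every $\delta\in(0,\delta_0(t))$ the spectrum of $\PP_\delta(t)$ inside $(E_\star+\delta\var_\flat',E_\star+\delta\var_\sharp')$ consists of exactly the $n(t)+1$ simple eigenvalues
\[
E_j(\delta)\de E_\star+\delta\mu_j(t)+O(\delta^2),\qquad j=0,\dots,n(t),
\]
and no others.

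Next I would use the fact that the $\mu_j(t)$ are finitely many and none of them equals $\var_\flat$ or $\var_\sharp$. Set $d\de\min_j\min\big(|\mu_j(t)-\var_\flat|,|\mu_j(t)-\var_\sharp|\big)>0$. For $\delta$ small enough (shrinking $\delta_0(t)$ if necessary) one has $|E_j(\delta)-(E_\star+\delta\var_\flat)|\geq \frac{d}{2}\delta$ and similarly at $\var_\sharp$, so $E_\star+\delta\var_\flat$ and $E_\star+\delta\var_\sharp$ avoid all $E_j(\delta)$. Since we already know $\PP_\delta(t)$ has no other spectrum in $(E_\star+\delta\var_\flat',E_\star+\delta\var_\sharp')$, this yields the first assertion $E_\star+\delta\var_\flat,E_\star+\delta\var_\sharp\notin\Sigma_{L^2}(\PP_\delta(t))$.

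Finally, because each $E_j(\delta)$ is simple and differs from $E_\star+\delta\var_\flat$, $E_\star+\delta\var_\sharp$ by at least $\frac{d}{2}\delta$, the condition $E_j(\delta)\in[E_\star+\delta\var_\flat,E_\star+\delta\var_\sharp]$ is, for $\delta$ small, equivalent to $\mu_j(t)\in[\var_\flat,\var_\sharp]$. Summing over $j$ gives the second assertion
\[
\dim\1_{[E_\star+\delta\var_\flat,E_\star+\delta\var_\sharp]}\big(\PP_\delta(t)\big)=\#\{j:\mu_j(t)\in[\var_\flat,\var_\sharp]\}=\dim\1_{[\var_\flat,\var_\sharp]}\big(\DDi(t)\big).
\]
There is essentially no analytic obstacle here since Theorem \ref{thm:3} does the hard work; the only subtlety is the order of quantifiers, namely making sure that the $O(\delta^2)$ remainders in the eigenvalue expansions are dominated by the gap $d\delta$ separating the $\mu_j(t)$ from the chosen endpoints $\var_\flat,\var_\sharp$, which fixes the final choice of $\delta_0(t)$.
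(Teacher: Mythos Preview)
Your proposal is correct and is exactly the argument the paper has in mind: the paper gives no separate proof of Corollary \ref{cor:2}, stating only that ``Theorem \ref{thm:3} has the corollary'', and your write-up spells out the obvious bookkeeping---extend $[\var_\flat,\var_\sharp]$ to an interval $[\var_\flat',\var_\sharp']$ satisfying the hypotheses of Theorem \ref{thm:3}, use the $O(\delta^2)$ localization of the eigenvalues to keep them away from the endpoints, and count. One minor remark: you assert the $E_j(\delta)$ are simple, which Theorem \ref{thm:3} does not literally state for general $t$; but the counting argument only needs multiplicities, and those are controlled by the statement that the spectrum in the window consists of exactly $n(t)+1$ eigenvalues each $\delta^2$-close to a distinct $E_\star+\delta\mu_j(t)$.
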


We use this corollary to relate the spectral flow of $\PP_\delta(t)$ to that of $\DDi(t)$. 

\begin{lem}\label{prop:1} Under the above assumptions, for every $\delta \in (0,1]$,
\begin{equation}\label{eq:9s}
\Sf_{\Tt^1}\big(\DDi\big) = \Sf_{\Tt^1}\big(\PP_\delta-E_\star\big).
\end{equation}
\end{lem}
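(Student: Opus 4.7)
My plan is to use the definition of spectral flow from \S\ref{sec:4.0} to express $\Sf_{\Tt^1}(\DDi)$ as a finite sum of dimensional differences of spectral projections, transfer these differences to $\PP_\delta - E_\star$ via Corollary \ref{cor:2}, and then extend the resulting identity from small $\delta$ to all $\delta \in (0, 1]$ by homotopy invariance.

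First, since $\{\DDi(t)\}_{t\in \Tt^1}$ is a continuous $2\pi$-periodic family of selfadjoint Fredholm operators on $L^2(\R;\C^2)$ with $0$ in the essential spectral gap (as $\var_-(t) > 0$ by $\oB$), the admissibility criterion \eqref{eq:1s} furnishes a partition $0 = t_0 < \dots < t_N = 2\pi$ and radii $a_1, \dots, a_N > 0$ such that for every $j$ and every $t \in [t_j, t_{j+1}]$, $\pm a_j \notin \Sigma_{L^2}(\DDi(t))$ and $\1_{[-a_j, a_j]}(\DDi(t))$ has constant finite rank; a slight perturbation of the $t_j$ ensures $0 \notin \Sigma_{L^2}(\DDi(t_j))$. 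This yields
\begin{equation*}
\Sf_{\Tt^1}(\DDi) = \sum_{j=0}^{N-1}\Big[ \dim \1_{[0,a_j]}\bigl(\DDi(t_{j+1})\bigr) - \dim \1_{[0,a_j]}\bigl(\DDi(t_j)\bigr) \Big].
\end{equation*}

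Second, I would promote Corollary \ref{cor:2}---stated at a single $t$---to a uniform statement on each compact subinterval $[t_j, t_{j+1}]$. This is the main obstacle of the proof. Unpacking the multiscale/Schur-complement reduction of \cite{DFW}, the pointwise threshold $\delta_0(t)$ is controlled from below by $\var_-(t)$ and $\dist\bigl(\pm a_j,\Sigma_{L^2}(\DDi(t))\bigr)$; both quantities are continuous and strictly positive on the compact interval $[t_j, t_{j+1}]$, hence bounded below by positive constants. A compactness argument then delivers $\delta_j > 0$ such that, for all $\delta \in (0, \delta_j)$ and all $t \in [t_j, t_{j+1}]$, $E_\star \pm \delta a_j \notin \Sigma_{L^2}(\PP_\delta(t))$ and
\begin{equation*}
\dim \1_{[E_\star, E_\star + \delta a_j]}\bigl(\PP_\delta(t)\bigr) = \dim \1_{[0, a_j]}\bigl(\DDi(t)\bigr), \qquad \dim \1_{[E_\star-\delta a_j, E_\star]}\bigl(\PP_\delta(t)\bigr) = \dim \1_{[-a_j, 0]}\bigl(\DDi(t)\bigr).
\end{equation*}
Setting $\delta_\star \de \min_j \delta_j$, the data $(t_j, \delta a_j)$ become admissible in the sense of \eqref{eq:1s} for $\{\PP_\delta(t) - E_\star\}$ whenever $\delta \in (0,\delta_\star)$, and summing the matching identities at the $t_j$'s yields $\Sf_{\Tt^1}(\PP_\delta - E_\star) = \Sf_{\Tt^1}(\DDi)$ for all such $\delta$.

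Finally, to extend the identity to every $\delta \in (0, 1]$, I apply homotopy invariance of the spectral flow to the family $(s, t) \in (0,1]\times\Tt^1 \mapsto \PP_s(t) - E_\star$. Continuity in the gap metric is the resolvent-identity computation already carried out in \S\ref{sec:3.3a}, and $\oA$ keeps $E_\star$ in the essential spectral gap throughout the homotopy; hence $\delta \mapsto \Sf_{\Tt^1}(\PP_\delta - E_\star)$ is constant on $(0,1]$, and the small-$\delta$ identity upgrades to the full range. All steps are standard once the uniform version of Corollary \ref{cor:2} is in hand, so the uniformity step is where the bulk of the work lies.
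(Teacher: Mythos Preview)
Your proposal follows essentially the same strategy as the paper: compute $\Sf_{\Tt^1}(\DDi)$ via an admissible partition, transfer the count to $\PP_\delta-E_\star$ using Corollary~\ref{cor:2}, verify admissibility of the transferred data by a compactness argument, and extend to all $\delta\in(0,1]$ by the homotopy invariance already established in \S\ref{sec:3.3a}.

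Two points of comparison are worth noting. First, the paper avoids unpacking the proof of Corollary~\ref{cor:2} to extract a uniform-in-$t$ threshold. Instead it applies the corollary only at the finitely many partition points $t_j$ to get the dimension matching, and then---to check that $E_\star\pm\delta a_j$ avoid $\Sigma_{L^2}(\PP_\delta(t))$ for every $t\in[t_j,t_{j+1}]$---it applies the corollary pointwise at each $\tau$, uses continuity of $t\mapsto\PP_\delta(t)$ to propagate the gap condition to a neighborhood, and covers $[t_j,t_{j+1}]$ by finitely many such neighborhoods. This is cleaner than your appeal to how $\delta_0(t)$ depends on the internal estimates of \cite{DFW}. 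Second, the paper handles the endpoint $0$ by introducing a small shift $\epsi_0$ (computing the spectral flow of $\PP_\delta-E_\star+\epsi_0\delta$ instead) rather than perturbing the $t_j$'s; either device works, but the shift is more robust since it does not require the set $\{t:0\in\Sigma_{L^2}(\DDi(t))\}$ to be nowhere dense. In particular, your claim of dimension matching for \emph{all} $t\in[t_j,t_{j+1}]$ with $\var_\flat=0$ is not directly available when $0$ is an eigenvalue of $\DDi(t)$ somewhere in the interval---but you only need it at the endpoints $t_j$, so this is harmless once noted.
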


This result is quite unusual. A first order differential operator with coefficients constant in the bulk stands in the LHS of \eqref{eq:9s}; while a second order differential operator with coefficients periodic in the bulk appears in the RHS.

\begin{proof} 1. Because of \eqref{eq:2k}, we need to prove \eqref{eq:9s} only for small $\delta$. Consider $\{t_j, \ a_j\}_{j \in [1,N]}$ satisfying \eqref{eq:1s}:
\begin{equation}
t \in [t_j,t_{j+1}] \ \Rightarrow \ \pm a_j \notin \Sigma_{L^2}\big(\DDi(t)\big) \text{ and } \1_{[-a_j,a_j]}\big(\DDi(t)\big) \text{ has finite rank.}
\end{equation}
There exists $\epsi_0 > 0$ such that the following conditions are satisfied :
\begin{itemize}
\item[(i)] For every $j \in [1,N-1]$, for every $t \in [t_j,t_{j+1}]$, the operators $\DDi(t)$ has no spectrum in $[-a_j-\epsi_0,-a_j+\epsi_0] \cup [a_j - \epsi_0, a_j+\epsi_0]$.
\item[(ii)] $\DDi(t_1), \ \dots, \ \DDi(t_N)$ have no spectrum in $[-\epsi_0,0)$.
\item[(iii)] For every $t \in [0,2\pi]$, $\epsi_0 \leq |\var_-(t)|$.
\end{itemize}
Because of (i), the collection $\{t_j, \ a_j-\epsi_0\}_{j \in [1,N]}$ satisfies \eqref{eq:1s}. Thus
\begin{equations}
\Sf_{\Tt^1}\left(\DDi\right) = \sum_{j=1}^{N-1} \dim \1_{[0,a_j-\epsi_0]}\left(\DDi(t_{j+1})\right) - \dim \1_{[0,a_j-\epsi_0]}\left(\DDi(t_j)\right).
\end{equations} 
From this equation and (ii), we deduce that
\begin{equations}
\Sf_{\Tt^1}\left(\DDi\right) = \sum_{j=1}^{N-1} \dim \1_{[-\epsi_0,a_j-\epsi_0]}\left(\DDi(t_{j+1})\right) - \dim \1_{[-\epsi_0,a_j-\epsi_0]}\left(\DDi(t_j)\right).
\end{equations} 

2. We apply Corollary \ref{cor:2} simultaneously to all $\PP_\delta(t_j)$ with $\var_\flat = -\epsi_0$ and $\var_\sharp = a_j-\epsi_0$; this is possible because $\{t_j\}_{j \in [1,N]}$ is finite. We deduce
\begin{equations}
\Sf_{\Tt^1}\left(\DDi\right) = \sum_{j=1}^{N-1} \dim \1_{[E_\star-\epsi_0\delta, E_\star+ a_j\delta-\epsi_0\delta]}\big(\PP_\delta(t_{j+1})\big) - \dim \1_{[E_\star-\epsi_0\delta, E_\star+ a_j\delta-\epsi_0\delta]}\big(\PP_\delta(t_j)\big)
\end{equations}
\begin{equations}\label{eq:9y}
= \sum_{j=1}^{N-1} \dim \1_{[0, \delta a_j]}\big(\PP_\delta(t_{j+1})-E_\star + \epsi_0 \delta\big) - \dim \1_{[0, \delta a_j]}\big(\PP_\delta(t_j)-E_\star+\epsi_0\delta\big).
\end{equations}

3. We would like to conclude that \eqref{eq:9y} is the spectral flow of $\PP_\delta-E_\star$. We first show that $\{\delta a_j, t_j\}_{j \in [1,N]}$ satisfies \eqref{eq:1s} for the family $\PP_\delta - E_\star + \epsi_0 \delta$. This only amounts to show that there exists $\delta_1 > 0$ with
\begin{equation}
\delta \in (0,\delta_1), \ \
\tau \in [t_j,t_{j+1}] \ \ \Rightarrow \ \ E_\star \pm \delta a_j - \epsi_0 \delta \notin \Sigma_{L^2}\big( \PP_\delta(\tau) \big).
\end{equation}
Observe that for every $\tau \in [t_j,t_{j+1}]$, $\var_\sharp = a_j-\epsi_0$, $\var_\flat = - a_j - \epsi_0$ satisfy the assumptions of Corollary \ref{cor:2} with $t = \tau$ because of (i).  This implies that there exists $\delta_0(\tau)$ with
\begin{equation}\label{eq:9x}
\delta \in \big(0,\delta_0(\tau) \big) \ \ \Rightarrow \ \ E_\star \pm \delta a_j - \epsi_0 \delta \notin \Sigma_{L^2}\big( \PP_\delta(\tau) \big).
\end{equation}
Because the family $\PP_\delta$ is continuous, \eqref{eq:9x} still holds on a small neighborhood of $\tau$: there exists $\eta(\tau) > 0$ such that
\begin{equation}
\delta \in \big(0,\delta_0(\tau) \big), \ \ |t-\tau| \leq \eta(\tau) \ \  \Rightarrow \ \ E_\star \pm \delta a_j - \epsi_0 \delta \notin \Sigma_{L^2}\big( \PP_\delta(\tau) \big).
\end{equation}
Now cover the compact set  $[t_j,t_{j+1}]$ by finitely many open sets $(\tau_\ell-\eta_\ell,\tau_\ell+\eta_\ell)$. Take $\delta_1$ to be the minimum of all $\delta_0(\tau_\ell)$ and conclude:
\begin{equation}
\delta \in \big(0,\delta_1 \big), \ \   \tau \in [t_j,t_{j+1}] \ \  \Rightarrow \ \ E_\star \pm \delta a_j - \epsi_0 \delta \notin \Sigma_{L^2}\big( \PP_\delta(\tau) \big).
\end{equation}
After shrinking $\delta_1$, we can assume that this holds for every $j \in [1,N-1]$.
We deduce from the definition of spectral flow and Step 2 that
\begin{equation}
\Sf_{\Tt^1}\left(\DDi\right) = \Sf_{\Tt^1}\big(\PP_\delta-E_\star+\epsi_0\delta\big).
\end{equation}
On the RHS, we have the spectral flow of $\PP_\delta$, measured at $E_\star- \epsi_0\delta$. This corresponds to the spectral flow of $\PP_\delta$ in the $n$-th gap because $\epsi_0$ satisfies (iii): the energy level $E_\star - \epsi_0 \delta$ belongs to the gap of $\PP_\delta(t)$ containing $E_\star$. Hence $\Sf_{\Tt^1}(\PP_\delta-E_\star+\epsi_0\delta)$ equals $\Sf_{\Tt^1}(\PP_\delta-E_\star)$. This concludes the proof.
\end{proof}

\subsection{Connection to the winding number}\label{sec:3.5b} We relate the winding number $m$ of $\var$ around $0$ to the spectral flow of $\DDi$. Introduce 
\begin{equation}
\Di_m(t) \de \sgn(\nu_\star)\bst  D_x - \chi_-\bss   - \chi_+ \matrice{0 & \ove{\var_\star}e^{-imt} \\ \var_\star e^{imt} & 0}.
\end{equation}
The functions $\var : \Tt^1 \rightarrow \C \setminus \{0\}$ and $t \mapsto -\var_\star e^{imt}$ have same degree. Since $\pi_1\left(\C \setminus \{0\}\right) = \Z$, there exists a continuous function $h : [0,1] \times \R \rightarrow \C \setminus \{0\}$ such that
\begin{equation}
h(1,t) = \var(t), \ \ h(0,t) = -\var_\star e^{imt}, \ \ h(s,t+2\pi) = h(s,t).
\end{equation}
Set
\begin{equation}
\DDi_s(t) \de \big(s\nu_\star+(1-s)\sgn(\nu_\star)\big) \bst  D_x - \chi_-\bss +\chi_+ \matrice{0 & \ove{h(s,t)} \ \\ h(s,t) & 0}.
\end{equation}
We observe that $\| \DDi_s(t) - \DDi_{s'}(t') \|_{L^2} \leq C|h(s,t)-h(s',t')|$ and that $\DDi_s(t)$ is Fredholm $H^1(\R) \rightarrow L^2(\R)$ for every $s \in [0,1]$.  Therefore, an argument as in \S\ref{sec:3.3a} shows that the family $s \mapsto \DDi_s(t)$ behaves continuously for the gap topology. 

This implies that that the spectral flows of $\DDi_1 = \DDi$ and $\DDi_0 = \Di_m$ are equal. Moreover, because of the concatenation properties of the spectral flow \cite[Lemma 4.2.2]{Wa}, $\Sf_{\Tt^1}(\Di_m) = m \cdot \Sf_{\Tt^1}(\Di_1)$. From Lemma \ref{prop:1}, we deduce:
\begin{equation}\label{eq:1b}
\Sf_{\Tt^1}\big(\PP_\delta-E_\star\big) = m \cdot \Sf_{\Tt^1}\big(\Di_1\big).
\end{equation}

\subsection{Computation of $\Sf_{\Tt^1}(\Di_1)$.} \label{sec:3.6a}

\begin{lem}\label{lem:5d} If $\chi_+ = \1_{[0,\infty)}$ and $\chi_- = \1_{(-\infty,0]}$ then for $t \in (0,2\pi)$, the operator 
\begin{equation}
\Di_1(t) \de \sgn(\nu_\star)\bst  D_x - \chi_-\bss   - \chi_+ \matrice{0 & \ove{\var_\star}e^{-it} \\ \var_\star e^{it} & 0}
\end{equation}
has a single eigenvalue in $(-\var_F,\var_F)$. It equals $\sgn(\nu_\star) \var_F \cdot \cos(t/2)$.
\end{lem}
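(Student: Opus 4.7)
The operator $\Di_1(t)$ has piecewise constant coefficients, so the plan is to reduce the eigenvalue problem $\Di_1(t)\psi = E\psi$ on each half-line to a linear ODE with constant coefficients, and then impose continuity at $x=0$ — the only matching condition for a first-order system. Because we seek eigenvalues in $(-\var_F,\var_F)$, selecting the decaying modes on each side will produce a single algebraic equation for $E$.

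I would set $s = \sgn(\nu_\star)$ and write $M_- = \bss$ on $\R_-$, $M_+ = \matrice{0 & \ove{\var_\star}e^{-it} \\ \var_\star e^{it} & 0}$ on $\R_+$. Multiplying $\Di_1(t)\psi = E\psi$ by $s\bst$ (using $\bst^2 = \Id$, $s^2 = 1$) converts the eigenvalue equation to $\p_x \psi = is\bst(E\Id + M_\pm)\psi$ on each half-line. A direct computation shows $\bst(E\Id + M_\pm)$ has trace $0$ and determinant $\var_F^2 - E^2$, hence eigenvalues $\pm i\gamma$ with $\gamma = \sqrt{\var_F^2-E^2} > 0$ for $E\in(-\var_F,\var_F)$. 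Consequently the transfer matrices $is\bst(E\Id + M_\pm)$ each have simple eigenvalues $\pm\gamma$.

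Next I would pick, on each side, the one-dimensional decaying subspace: eigenvalue $-\gamma$ of the transfer matrix on $\R_+$ (equivalently, eigenvalue $is\gamma$ of $\bst(E\Id + M_+)$) and eigenvalue $+\gamma$ on $\R_-$ (eigenvalue $-is\gamma$ of $\bst(E\Id + M_-)$). A short linear-algebra computation yields explicit eigenvectors \[ v_+ = \matrice{\ove{\var_\star}e^{-it} \\ is\gamma - E}, \qquad v_- = \matrice{\ove{\var_\star} \\ -is\gamma - E}. \] An $L^2(\R)$ eigenfunction of $\Di_1(t)$ with eigenvalue $E$ exists iff $v_+$ and $v_-$ are colinear, i.e. $\det(v_+ \mid v_-) = 0$. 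Factoring out $\ove{\var_\star}$ and multiplying by $e^{it/2}$, this matching condition collapses to \[ E\sin(t/2) = s\gamma\cos(t/2). \]

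Finally, squaring and using $\gamma^2 = \var_F^2 - E^2$ gives $E^2 = \var_F^2 \cos^2(t/2)$. Since $\sin(t/2) > 0$ for $t \in (0,2\pi)$ and $\gamma > 0$, the sign condition $E\sin(t/2) = s\gamma\cos(t/2)$ selects the unique root $E = s\,\var_F\cos(t/2) = \sgn(\nu_\star)\,\var_F\cos(t/2)$. Uniqueness in $(-\var_F,\var_F)$ is automatic: the matching equation admits at most one solution compatible with the sign constraint. The main obstacle is purely bookkeeping — keeping track of signs generated by $\sgn(\nu_\star)$ and by the phase of $\var_\star \in \C^*$ — but the reduction to constant-coefficient ODEs makes the remaining calculation entirely mechanical.
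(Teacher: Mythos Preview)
Your proof is correct and follows essentially the same approach as the paper: reduce the eigenvalue problem to constant-coefficient ODEs on each half-line, select the decaying mode on each side, and impose continuity at $x=0$. Your organization---computing transfer-matrix eigenvectors and writing the matching condition as a single $2\times 2$ determinant, with $s=\sgn(\nu_\star)$ carried uniformly---is somewhat cleaner than the paper's (which subtracts the two eigenvalue systems to first determine $\gamma_+-\gamma_-$, then solves a quadratic in $a,b$, and treats the cases $\nu_\star>0$ and $\nu_\star<0$ separately), but the underlying idea is identical.
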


\begin{proof} We first work in the case $\nu_\star > 0$. Fix $t \in (0,2\pi)$ and assume that $u \in H^1(\R)$ is a non-zero solution of $(\Di_1(t)-\lambda) u = 0$. In particular $u$ is continuous. From standard elliptic regularity theory, $u$ is smooth except possibly at $x=0$. For $x \neq 0$, $u$ solves a first order ODE with constant coefficients. Hence, there exist $a, b, \gamma_\pm \in \C$ with
\begin{equation}
u(x) = \matrice{a  \\ b }e^{i\gamma_\pm x} \ \text{ for } \ \pm x \geq 0, \ \ \pm \Im \gamma_\pm > 0, \ \ (a,b) \neq (0,0).
\end{equation}
The above constants must solve the equations
\begin{equations}\label{eq:2i}
\left(\bst \gamma_+ - \matrice{0 & \ove{\var_\star} e^{-it} \\ \var_\star e^{it} & 0} - \lambda \right)\matrice{a \\ b}  = 0, \ \ \ \ \big( \bst \gamma_- - \bss-\lambda \big) \matrice{a \\ b} = 0, \\  \pm \Im \gamma_\pm > 0, \ \ \ \ (a,b) \neq (0,0).
\end{equations}
Taking the difference of the two equations, we obtain
\begin{equation}\label{eq:2h}
\left(\bst(\gamma_+-\gamma_-) - \matrice{0 & \ove{\var_\star}(e^{-it}-1) \\ \var_\star(e^{it}-1) & 0}\right)\matrice{a \\ b} = 0.
\end{equation}
In particular the matrix involved in \eqref{eq:2h} is singular and its determinant vanishes. It implies $(\gamma_+-\gamma_-)^2 = - \var_F^2 |e^{it}-1|^2$. Since $\Im \gamma_+ > 0$ and $\Im \gamma_- < 0$ we get $\gamma_+-\gamma_- = i\var_F \cdot |e^{it}-1|$. This relation allows to solve \eqref{eq:2h} for $[a,b]^\top$: 
\begin{equation}\label{eq:2j}
\matrice{a \\ b} \in \C \cdot \matrice{i\var_F \cdot |e^{it}-1| \\ -\var_\star(e^{it}-1)}.
\end{equation}
The second eigenvalue problem in \eqref{eq:2i} corresponds to the system
\begin{equation}
\systeme{\gamma_- a - \ove{\var_\star} b - \lambda a = 0 \\ -\gamma_- b - \var_\star a - \lambda b = 0 }
\end{equation}
and we deduce that $\ove{\var_\star} b^2 + \var_\star a^2 + 2\lambda ab = 0$. Because of \eqref{eq:2j}, this yields
\begin{equation}\label{eq:3o}
\lambda = -\dfrac{\ove{\var_\star}b^2+\var_\star a^2}{2ab} 
= \dfrac{\var_F\sin(t)}{|e^{it}-1|} = \var_F\cos(t/2).
\end{equation}
\indent So far we showed that if $\lambda$ is an eigenvalue of $\Di_1(t)$ then $\lambda = \var_F \cos(t/2)$. To prove that the converse holds, we must check that there exist $\gamma_\pm, a, b$ solving \eqref{eq:2i} with $\lambda = \var_F \cos(t/2)$. The eigenvalue problems \eqref{eq:2i} have solutions 
\begin{equation}\label{eq:5s}
\var_F \cos(t/2) = \lambda = \pm \sqrt{\gamma_+^2+\var_F^2} = \pm \sqrt{\gamma_-^2+\var_F^2}.
\end{equation}
When $t \in (0,2\pi)$, $\cos(t/2)$ belongs to $(-1,1)$. Thus solutions $\gamma_\pm$ of \eqref{eq:5s} have a non-zero imaginary part; and if $\gamma_\pm$ is solution then so is $-\gamma_\pm$. Therefore there is a unique solution of \eqref{eq:5s} with $\Im \gamma_\pm > 0$. This proves the converse part and concludes the proof of the lemma when $\nu_\star > 0$.

If $\nu_\star < 0$, then the solution to \eqref{eq:2j} becomes
\begin{equation}
\matrice{a \\ b} \in \C \matrice{i\var_F \cdot |e^{it}-1| \\ \var_\star(e^{it}-1)}.
\end{equation}
In other words, $b$ changes sign while $a$ remains unchanged. It follows from \eqref{eq:3o} that $\lambda = -\var_F\cos(t/2) = \sgn(\nu_\star) \var_F \cdot \cos(t/2)$.
 \end{proof}

Hence when $\chi_+= \1_{[0,\infty)}$ and $\chi_-=\1_{(-\infty,0]}$ we can compute precisely the spectrum of $\Di_1(t)$. The corresponding spectral flow equals $\sgn(\nu_\star)$: a single eigenvalue flows either upwards or downwards as $t$ runs from $0$ to $2\pi$, depending on the sign of $\nu_\star$.\footnote{The identity $\Sf(\Di_1) = \sgn(\nu_\star)$ may possibly be contained in more general results \cite{FSF,Ba2}. The proof via Lemma \ref{lem:5d} is nonetheless very simple.} See Figure \ref{fig:4}. Because the spectral flow is a topological invariant, it remains equal to $\sgn(\nu_\star)$ even when $\chi_\pm$ are not piecewise constant -- see the argument of \S\ref{sec:3.3a} and \S\ref{sec:3.5b}. Together with \eqref{eq:2k} and \eqref{eq:1b}, we deduce Theorem \ref{thm:1}: 
\begin{equation}
\Sf_{\Tt^1}\big(\PP-E(1,\cdot)\big) = \sgn(\nu_\star) \cdot m = (-1)^{\frac{n-1}{2}} \cdot m,
\end{equation}
where we used $\sgn(\nu_\star) = (-1)^{\frac{n-1}{2}}$ -- see \eqref{eq:9t}.

\begin{center}
\begin{figure}
{\caption{On the left, the spectrum of $\Di_1(t)$ when $\chi_+= \1_{[0,\infty)}$, $\chi_-=\1_{(-\infty,0]}$ and $\nu_\star > 0$. The gray region represents the essential spectrum and the red curve is the eigenvalue. It flows from the top to the bottom of the spectrum as $t$ runs from $0$ to $2\pi$. The spectral flow equals $1$. On the right, a representation of the spectrum of $\DDi$ when $m=1$. It may look quite different from that of $\Di_1$ but the intersection number of the red curve with the horizontal axis must be the same.}\label{fig:4}}
{\includegraphics{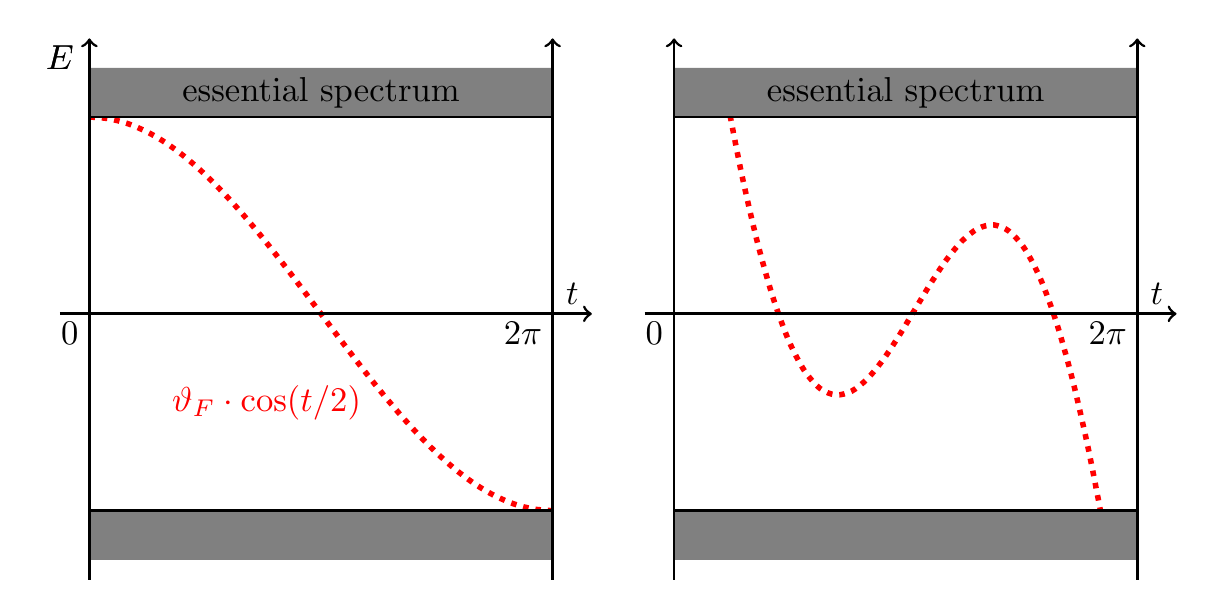}}
\end{figure}
\end{center}
\vspace{-1cm}

\section{Chern character and Chern class of some eigenbundles}\label{sec:4}

In the last section we expressed the edge index as a winding number involving $W$ and $\phi_\pm^\star$. We go on to the computation of the bulk index. In this section we collect some well-known results about  Berry connection, Berry curvature \cite{Si, Be} and (first) Chern class on rank-one eigenbundles with two-dimensional toric bases, without proofs. For an introduction to vectors bundles, connections, curvatures forms and characteristic classes, we recommend \cite[\S1]{BGV}.

\subsection{Unitary bundles over the two-torus}\label{sec:1z.1} Let $\Tt^2$ be the two-torus $\R^2/(2\pi\Z)^2$. We cover $\Tt^2$ with open sets $U_\az$, each diffeomorphic to open sets $V_\az \subset \R^2$. We will write $\zeta_\az \in V_\az$ for the coordinate of $\zeta \in U_\az$. Let $\EE \rightarrow \Tt^2$ be a smooth vector bundle of rank $n$, such that
\begin{equation}\label{eq:1zg}
\zeta = (\xi,t) \in \R^2/(2\pi \Z)^2 \ \Rightarrow \ \EE_\zeta \subset L^2_\xi.
\end{equation}
This bundle inherits the Hermitian metric induced by the Hermitian product on $L^2_\xi$.

The quotient map $\R^2 \rightarrow \Tt^2$ is surjective; therefore $\EE$ induces naturally a smooth Hermitian bundle $\Ee \rightarrow \R^2$. Since $\R^2$ is contractible, $\Ee$ is trivial. Let $\Psi = \{\Psi_j\}_{j=1}^n$ be a smooth unitary frame of $\Ee$: for each $z \in \R^2$, $\Psi(z) = \{\Psi(z)_j\}_{j=1}^n$ is a unitary basis of $\Ee_z$. If $\zeta \in U_\az \cap U_\beta \subset \Tt^2$, both $\{\Psi(\zeta_\az)_j\}_{j=1}^n$ and $\{\Psi(\zeta_\beta)_j\}_{j=1}^n$ are unitary basis of $\EE_\zeta$. Therefore, $\blr{\Psi\big(\zeta_\beta)_j,\Psi(\zeta_\az)_k}$ form a unitary matrix: for every $i, k$, 
\begin{equation}\label{eq:1zc}
 \delta_{ik} = \sum_{j=1}^n  \overline{\bblr{\Psi(\zeta_\beta)_j,\Psi(\zeta_\az)_i}} \cdot \bblr{\Psi(\zeta_\beta)_j,\Psi(\zeta_\az)_k} .
\end{equation}
We use $\Psi$ to define coordinates on $\EE$: a point $(\zeta,f) \in \EE$ with $\zeta \in U_\az$ has coordinates $\big(\zeta_\az, \blr{\Psi(\zeta_\az),f } \big) \in \R^2 \times \C^n$. We remark that if $\zeta \in U_\az \cap U_\beta$, then $(\zeta,f)$ also has coordinates 
$\left(\zeta_\beta, \blr{\Psi(\zeta_\beta),f } \right)$. Therefore, the transition functions $t_{\az \beta}(\zeta) \in M_n(\C)$ associated to this system of coordinates satisfy
\begin{equation}
f \in \EE_\zeta \ \ \Rightarrow \ \ \bblr{\Psi(\zeta_\az)_i,f } = \sum_{j=1}^n t_{\az\beta}(\zeta)_{ij} \cdot \bblr{\Psi(\zeta_\beta)_j,f }.
\end{equation}
Taking $f = \Psi(\zeta_\az)_k$ and recalling that $\blr{\Psi(\zeta_\az)_i,\Psi(\zeta_\az)_k} = \delta_{ik}$, we deduce from \eqref{eq:1zc} an expression for the transition functions:
\begin{equations}
 t_{\az\beta}(\zeta)_{ij} = \overline{\bblr{\Psi(\zeta_\beta)_j,\Psi(\zeta_\az)_i}} = \bblr{\Psi(\zeta_\az)_i,\Psi(\zeta_\beta)_j}.
\end{equations}
In particular,
\begin{equation}\label{eq:1zf}
\Psi(\zeta_\az)_i = \sum_{j=1}^n \bblr{\Psi(\zeta_\beta)_j,\Psi(\zeta_\az)_i} \cdot \Psi(\zeta_\beta)_j = \sum_{j=1}^n \overline{t_{\az\beta}(\zeta)_{ij}} \cdot \Psi(\zeta_\beta)_j.
\end{equation}

\subsection{Sections} We define smooth sections of $\EE$ are functions $\sigma : M \rightarrow \cup_{\zeta \in \Tt^2} \EE_\zeta$ such that the map
\begin{equation}
\zeta \in \Tt^2 \mapsto \big( \zeta, \sigma(\zeta) \big) \in \EE
\end{equation}
is smooth. We denote the space of smooth section of $\EE$ by $\Gamma\big(\Tt^2,\EE\big)$. We define similarly sections of $\Ee$.

 We observe that a section of $\EE$ always induces a section of $\Ee$. On the other hand, if $s$ is a section of $\Ee$, then $s$ induces a section $\sigma$ of $\EE$ if and only if for every $\zeta \in U_\az \cap U_\beta \subset \Tt^2$,
\begin{equation}\label{eq:1zd}
\lr{\Psi(\zeta_\az)_i,s(\zeta_\az)} = \sum_{j=1}^n t_{\az\beta}(\zeta)_{ij} \cdot \bblr{\Psi(\zeta_\beta)_j ,s(\zeta_\beta)}.
\end{equation}
If we write $s = \sum_{j=1}^n s_j \cdot \Psi_j$, this is equivalent to the compatibility condition $s(\zeta_\az)_i = \sum_{j=1}^n t_{\az\beta}(\zeta)_{ij} \cdot s(\zeta_\beta)_j$.

\subsection{Connections on $\EE$} For $(\xi,t) \in \R^2$, we define $\Pi(\xi,t)$ as the orthogonal projector from $L^2_\xi$ to $\Ee_{\xi,t}$. Given $s$ a section of $\Ee$, we write $s = \sum_{k=1}^n s_k \cdot \Psi_k$, $ds_j = \p_ts_k \cdot dt + \p_\xi s_k \cdot d\xi$ and we define $\nabla s$ as:
\begin{equation}\label{eq:1za}
\nabla s \de \sum_{k=1}^n ds_k \cdot \Psi_k  + s_k \cdot \Pi \Big( \p_t \Psi_k \cdot dt + \nabla_\xi \Psi_k \cdot d\xi \Big), \ \ \ \ \ \nabla_\xi \Psi_k \de e^{i\xi x} \cdot \dd{\big(e^{-i\xi x} \Psi_k\big)}{\xi}.
\end{equation} 
This is a connection: a first order differential operator from $\Gamma\big(\R^2,\Ee\big)$ to $\Gamma\big(\R^2,\Ee \otimes T^*\R^2\big)$ that respects Leibnitz's rule.
In a more general framework, \eqref{eq:1za} defines the Grassmann connection on $\Ee$. It is the natural projection of the trivial connection on the bundle with fibers $L^2_\xi$ on $\Ee$, which is defined as
\begin{equation}
\nabla g = \dd{g}{t} \cdot dt + e^{i\xi x} \cdot \dd{\big(e^{-i\xi x} f\big)}{\xi} \cdot d\xi = \dd{g}{t} \cdot dt + \nabla_\xi g \cdot d\xi.
\end{equation}
for every $g : \R^2 \rightarrow \cup_\xi L^2_\xi$ such that for every $(\xi, t)$, $e^{-i\xi x} g(\xi,t) \in L^2_0$ and varies smoothly. In this framework it is known as the Berry connection.

\begin{lem}\label{lem:1zza} If $s \in \Gamma(\R^2,\Ee)$ induces a section of $\EE$, then $\nabla s$ -- initially defined as a section of $\Ee \otimes T^*\R^2$ -- induces a section of $\EE\otimes T^*\Tt^2$.
\end{lem}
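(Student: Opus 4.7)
The plan is to recognize the Grassmann connection $\nabla$ of \eqref{eq:1za} as the orthogonal projection of a natural ``Berry'' connection $\widetilde{\nabla}$ defined on an ambient Hilbert bundle over $\Tt^2$, and to deduce the lemma from the globality of each ingredient.

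First I would introduce the ambient Hilbert bundle $\widetilde{\EE} \to \Tt^2$ whose fiber at $(\xi, t)$ is $L^2_\xi$; this is well-defined over $\Tt^2$ since $L^2_{\xi + 2\pi} = L^2_\xi$. The bundle $\EE$ sits in $\widetilde{\EE}$ as a smooth sub-bundle and the fiberwise orthogonal projector $\Pi$ is a globally defined bundle map over $\Tt^2$. Next I would verify that the formula $\widetilde{\nabla} g \de \p_t g \cdot dt + \nabla_\xi g \cdot d\xi$ defines a connection on $\widetilde{\EE}$ over $\Tt^2$, not merely over $\R^2$. The $t$-direction is obvious; the $\xi$-direction is the substantive point. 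If $g$ descends to $\Tt^2$, i.e.\ $g(\xi + 2\pi, t) = g(\xi, t)$ in $L^2_\xi$, then $e^{-i\xi x} g(\xi, t) \in L^2_0$ transforms by the $\xi$-independent factor $e^{-2\pi i x}$ under $\xi \mapsto \xi + 2\pi$. Consequently $\p_\xi(e^{-i\xi x} g)$ transforms by the same multiplicative factor, which is exactly cancelled by the prefactor $e^{i\xi x}$ in the definition of $\nabla_\xi$. Hence $\widetilde{\nabla} g$ descends as well.

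Then I would match \eqref{eq:1za} with the intrinsic identity $\nabla = \Pi \circ \widetilde{\nabla}$. Expanding $s = \sum_k s_k \Psi_k$ and using the Leibnitz rule of $\nabla_\xi$ and $\p_t$ with respect to scalar factors gives $\widetilde{\nabla} s = \sum_k (ds_k) \Psi_k + s_k(\p_t \Psi_k \cdot dt + \nabla_\xi \Psi_k \cdot d\xi)$; projecting via $\Pi$ and using $\Pi \Psi_k = \Psi_k$ recovers \eqref{eq:1za}. Since $\Pi$ and $\widetilde{\nabla}$ are both globally defined over $\Tt^2$, so is $\nabla = \Pi \widetilde{\nabla}$; therefore $\nabla s$ induces a section of $\EE \otimes T^*\Tt^2$, as required.

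The main obstacle is the verification that $\nabla_\xi$ descends from $\R^2$ to $\Tt^2$; this is precisely what motivates the ``twisted'' prefactor $e^{i\xi x}$ in its definition. A direct alternative --- checking the compatibility condition \eqref{eq:1zd} on $\nabla s$ by hand using the transition functions $t_{\az\beta}$ together with \eqref{eq:1zf} --- is possible but considerably lengthier and less transparent, as one has to carry the $e^{i\xi x}$ factor through each change-of-frame computation.
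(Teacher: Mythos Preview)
Your argument is correct. Note, however, that the paper does not actually supply a proof of this lemma: at the start of \S\ref{sec:4} the author announces that the results on the Berry connection, curvature and Chern class are collected ``without proofs'', and Lemma~\ref{lem:1zza} is simply stated and then used. So there is no paper proof to compare against; you have filled in a detail that was intentionally left to the reader.

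Your factorization $\nabla = \Pi \circ \widetilde{\nabla}$ is exactly the mechanism the paper has in mind: immediately after \eqref{eq:1za} the author remarks that $\nabla$ ``is the natural projection of the trivial connection on the bundle with fibers $L^2_\xi$''. The one nontrivial point---that $\nabla_\xi g = e^{i\xi x}\p_\xi(e^{-i\xi x}g)$ is $2\pi$-periodic in $\xi$ whenever $g$ is---you handle correctly: the $\xi$-independent phase $e^{-2\pi ix}$ picked up by $e^{-i\xi x}g$ under $\xi\mapsto\xi+2\pi$ commutes with $\p_\xi$ and is cancelled by the prefactor. Since $\Pi$ is manifestly $(2\pi\Z)^2$-periodic (the fibers $\EE_\zeta$ are), the composition descends.
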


Lemma \ref{lem:1zza} shows that $\nabla$ induces a connection on $\EE$.

\subsection{Curvature} We review the curvature of $\nabla$. Given $X, Y \in T\Tt^2$ and $\sigma \in \Gamma\big(\Tt^2,\EE\big)$, we set $\nabla_X \sigma = \nabla \sigma(X) \in \Gamma(\Tt^2,\EE)$ and 
\begin{equation}
\BB(X,Y) = \nabla_X\nabla_Y - \nabla_Y \nabla_X : \Gamma(\Tt^2,\EE) \rightarrow \Gamma(\Tt^2,\EE).
\end{equation}
The operator $(\sigma,X,Y) \mapsto \BB(X,Y) \sigma$ is a tensor field: its evaluation at $\zeta$ depends only on $\sigma(\zeta)$, $X(\zeta)$ and $Y(\zeta)$. Moreover, $\BB(X,Y) \sigma = - \BB(Y,X) \sigma$. Therefore, this operator is a $\End(\EE)$-valued two-form on $\Tt^2$, given by $\BB(\p_\xi,\p_t) \cdot d\xi \wedge dt$. Since $\p_\xi$ and $\p_t$ are global vector fields on $\Tt^2$, $\BB(\p_\xi,\p_t)$ is a smooth section of $\End(\EE)$. 

\begin{lem}\label{lem:1zzb} The trace of $\BB(\p_t,\p_\xi)$ is a smooth function on $\EE$, given in local coordinates by:
\begin{equations}\label{eq:1zb}
B(\zeta) = 2i \cdot \Im \sum_{j=1}^n \bblr{ \nabla_\xi\Psi(\zeta_\az)_j,\nabla_t \Psi(\zeta_\az)_j } 
= \Tr\Big( \Pi(\zeta_\az) \big[ \nabla_\xi \Pi(\zeta_\az), \p_t  \Pi(\zeta_\az) \big] \Big), \\ \nabla_\xi \Pi(\zeta_\az) \de e^{i\xi x} \cdot \dd{ e^{-i\xi x} \Pi(\zeta_\az) e^{i\xi x}}{\xi} \cdot e^{-i\xi x}.
\end{equations}
\end{lem}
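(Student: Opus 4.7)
The plan is to compute both expressions for $B(\zeta)$ directly in a local smooth unitary frame $\{\Psi_j\}_{j=1}^n$ of $\Ee$, exploiting the definition $\nabla_X = \Pi \tilde{\nabla}_X$ of the Grassmann connection as the orthogonal projection of the ambient ``trivial'' connection $\tilde{\nabla}_t = \p_t$, $\tilde{\nabla}_\xi = e^{i\xi x}\p_\xi e^{-i\xi x}$. Since the map $(\sigma,X,Y) \mapsto \BB(X,Y)\sigma$ is tensorial (as noted right before the statement), $B(\zeta)$ is a well-defined, smooth, frame-independent scalar on $\EE$, so any local computation yields the global formula. I will introduce the connection $1$-forms
$A^j_{k,\mu} \de \langle \Psi_j, \tilde{\nabla}_\mu \Psi_k\rangle$, $\mu \in \{\xi,t\}$. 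Unitarity of the frame plus metric-compatibility of $\tilde{\nabla}$ give the antihermiticity relation $A^j_{k,\mu} = -\overline{A^k_{j,\mu}}$; in particular the diagonal entries $A^k_{k,\mu}$ are purely imaginary.

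For the first equality, I would apply $\BB(\p_t,\p_\xi) = [\nabla_t,\nabla_\xi]$ to $\Psi_k$ using \eqref{eq:1za}. This expands into a ``linear'' piece $\sum_j (\p_t A^j_{k,\xi} - \p_\xi A^j_{k,t})\Psi_j$ and a ``quadratic'' piece $\sum_{j,l}(A^j_{k,\xi}A^l_{j,t} - A^j_{k,t}A^l_{j,\xi})\Psi_l$. Pair with $\Psi_k$ and sum over $k$: the quadratic piece vanishes identically after relabelling $j \leftrightarrow k$ (this is the standard $\Tr(A \wedge A) = 0$ identity for matrix-valued $1$-forms), leaving $\Tr\BB(\p_t,\p_\xi) = \sum_k(\p_t A^k_{k,\xi} - \p_\xi A^k_{k,t})$. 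Now I would expand each scalar derivative by Leibniz:
\begin{equation}
\p_t A^k_{k,\xi} = \langle \p_t \Psi_k, \nabla_\xi \Psi_k\rangle + \langle \Psi_k, \p_t \nabla_\xi \Psi_k\rangle,
\end{equation}
and similarly $\p_\xi A^k_{k,t}$. Because $\p_t$ and $\tilde{\nabla}_\xi = \p_\xi - ix$ commute on the ambient bundle, the two ``second-derivative'' terms cancel in the difference, leaving $\sum_k\big(\langle \p_t \Psi_k,\nabla_\xi\Psi_k\rangle - \langle\nabla_\xi\Psi_k,\p_t\Psi_k\rangle\big) = 2i\,\Im \sum_k\langle\nabla_\xi\Psi_k,\nabla_t\Psi_k\rangle$ (up to the sign convention $\BB(\p_t,\p_\xi) = -\BB(\p_\xi,\p_t)$).

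For the second equality, I would use $\Pi = \sum_j |\Psi_j\rangle\langle\Psi_j|$ and $\p_\mu \Pi = \sum_j |\tilde{\nabla}_\mu\Psi_j\rangle\langle\Psi_j| + |\Psi_j\rangle\langle\tilde{\nabla}_\mu\Psi_j|$. Multiplying out $\Pi \cdot \nabla_\xi\Pi \cdot \p_t\Pi$ and taking the trace, one gets four families of terms. Two of them, involving a single factor of $A$ and one inner product $\langle\tilde{\nabla}_\mu\Psi_k,\Psi_l\rangle$, sum to $\sum_{j,k}A^k_{j,\xi}\,\overline{A^k_{j,t}} = -\sum_{j,k}A^j_{k,\xi}A^k_{j,t}$ and cancel against the purely quadratic $\sum_{j,k}A^j_{k,\xi}A^k_{j,t}$ coming from the $|\Psi\rangle\langle\Psi|$ pieces; the remaining term is $\sum_j \langle \tilde{\nabla}_\xi \Psi_j, \p_t \Psi_j\rangle$. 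Antisymmetrizing in $(\xi,t)$ to form the commutator then gives $2i\,\Im\sum_j\langle\nabla_\xi\Psi_j,\nabla_t\Psi_j\rangle$, matching the first formula.

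The main obstacle is bookkeeping: tracking the numerous off-diagonal terms produced by expanding $\Pi$ in the frame and verifying that only the gauge-invariant combination $\langle\nabla_\xi\Psi_j,\nabla_t\Psi_j\rangle - \overline{\langle\nabla_\xi\Psi_j,\nabla_t\Psi_j\rangle}$ survives. The two algebraic tools that do all the work are (i) the identity $\Pi(\p_\mu\Pi)\Pi = 0$, which comes from differentiating $\Pi^2 = \Pi$, and (ii) the antihermiticity of the connection form; once these are invoked systematically, the computation is mechanical and both expressions reduce to the same Chern--Weil representative.
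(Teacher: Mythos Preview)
The paper does not actually prove this lemma: Section~\ref{sec:4} explicitly says it collects ``well-known results about Berry connection, Berry curvature \ldots\ without proofs,'' and Lemma~\ref{lem:1zzb} is one of those collected statements. So there is no paper proof to compare against.

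Your argument is the standard Chern--Weil computation and is correct in substance. A few small remarks. First, your sign remark is well placed: the statement asserts $B(\zeta)=\Tr\BB(\p_t,\p_\xi)$ but the formula $2i\,\Im\sum_j\langle\nabla_\xi\Psi_j,\nabla_t\Psi_j\rangle$ matches $\Tr\BB(\p_\xi,\p_t)$ under your computation; this is a convention issue in the paper, not an error in your proof. Second, in your Leibniz step you wrote $\langle\Psi_k,\p_t\nabla_\xi\Psi_k\rangle$ with $\nabla_\xi$; since $A^k_{k,\xi}=\langle\Psi_k,\tilde\nabla_\xi\Psi_k\rangle$ involves the ambient derivative $\tilde\nabla_\xi=\p_\xi-ix$, make sure you use $\tilde\nabla_\xi$ there so that $[\p_t,\tilde\nabla_\xi]=0$ applies cleanly (the paper's notation $\nabla_\xi\Psi_k$ in the lemma statement already means $\tilde\nabla_\xi\Psi_k$, so this is only a notational caution). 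Third, for the projector formula your invocation of $\Pi(\p_\mu\Pi)\Pi=0$ is the efficient route: it immediately kills the ``$|\Psi\rangle\langle\Psi|$'' pieces and the surviving cross-terms assemble into $\sum_j\langle\tilde\nabla_\xi\Psi_j,\p_t\Psi_j\rangle-\langle\p_t\Psi_j,\tilde\nabla_\xi\Psi_j\rangle$ after the commutator, matching the first expression.
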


So far, everything depended on the frame $\Psi$ of \S\ref{sec:1z.1} for the bundle $\Ee$. If $\Phi$ was another frame, then $\Psi$ and $\Phi$ would be related by a family of unitary maps $\Omega : \R^2 \rightarrow U(n)$, via:
$\Psi(z)_i = \Omega(z)_{ij} \Psi)(z)_j$. However, the formula \eqref{eq:1zb} is manifestly gauge-invariant: it depends only on the projector $\Pi(z)$, which does not depend on the choice of frame. In other words, the function $B \in C^\infty\big(\Tt^2,\C\big)$ depends only on $\EE$. It is called the Berry curvature of $\EE$.

\subsection{Chern number}\label{sec:8.3} We define the first Chern class of $\EE$ as:
\begin{equation}\label{eq:1zh}
c_1(\EE) \de \dfrac{1}{2\pi i} \int_{\Tt^2} B(\zeta) d\zeta.
\end{equation}

\begin{lem}\label{lem:1zzc} The Chern class of $\EE$ is an integer.
\end{lem}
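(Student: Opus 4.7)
\textbf{The plan is} to reduce to the rank-one case via the determinant construction and then apply Stokes' theorem. Let $L = \det \EE$ be the complex line bundle over $\Tt^2$ whose fiber at $\zeta$ is $\bigwedge^n \EE_\zeta$. The Grassmann connection on $\EE$ induces a connection on $L$ by Leibniz's rule: if $\Psi = (\Psi_1, \dots, \Psi_n)$ is a local unitary frame for $\EE$ and $\omega = (\omega_{ij})$ is the matrix of connection $1$-forms in this frame, then $\Psi_1 \wedge \cdots \wedge \Psi_n$ is a local frame for $L$ with connection $1$-form $\trace(\omega)$. Since the $\omega_{ij}$ are ordinary $1$-forms, the antisymmetry of the wedge product yields
\begin{equation*}
\trace(\omega \wedge \omega) = \sum_{i,j} \omega_{ij} \wedge \omega_{ji} = -\sum_{i,j} \omega_{ji} \wedge \omega_{ij} = -\trace(\omega \wedge \omega),
\end{equation*}
so $\trace(\omega \wedge \omega) = 0$. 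Hence the curvature of $L$ in this frame equals $d(\trace \omega) = \trace(d\omega + \omega \wedge \omega) = \trace(F)$, which by Lemma \ref{lem:1zzb} coincides with $B \cdot d\xi \wedge dt$. It therefore suffices to show $\frac{1}{2\pi i}\int_{\Tt^2} B(\zeta) d\zeta \in \Z$ when $L$ has rank one.

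\textbf{For the rank-one step,} I would fix a point $\zeta_0 \in \Tt^2$ and a small closed disk $D \subset \Tt^2$ centered at $\zeta_0$, and set $U = \Tt^2 \setminus \{\zeta_0\}$. The open set $U$ deformation-retracts onto a wedge of two circles, so $H^2(U, \Z) = 0$ and $L|_U$ is topologically trivial; pick a nowhere-vanishing section $s_U$ of $L|_U$. Because $D$ is contractible, $L|_D$ likewise admits a nowhere-vanishing section $s_D$. On the overlap $D \setminus \{\zeta_0\}$, write $s_D = g \cdot s_U$ for some smooth $g : D \setminus \{\zeta_0\} \to \C^*$. If $A_D$ and $A_U$ denote the connection $1$-forms of $L$ in the frames $s_D$ and $s_U$, then $dA_D$ and $dA_U$ both represent the Berry curvature $2$-form $B \cdot d\xi \wedge dt$ on their respective domains, while the change-of-frame formula gives $A_D - A_U = g^{-1} dg = d\log g$ on the overlap. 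Stokes' theorem, with $\p D$ oriented as the boundary of $D$, yields
\begin{equation*}
\int_{\Tt^2} B(\zeta) d\zeta = \int_D dA_D + \int_{\Tt^2 \setminus D} dA_U = \int_{\p D} A_D - \int_{\p D} A_U = \int_{\p D} d\log g = 2\pi i \cdot w,
\end{equation*}
where $w \in \Z$ is the winding number of $g|_{\p D}$ around $0 \in \C$. This concludes the proof.

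\textbf{The main obstacle} is not any single computation but the clean identification, in the first step, of the connection induced by the Grassmann construction on $\det \EE$ with the one whose curvature appears in \eqref{eq:1zb}; this is essentially a Chern--Weil verification that needs to be made precise against the specific Grassmann connection of \eqref{eq:1za}. An alternative route that bypasses the determinant reduction is to cover $\Tt^2$ with finitely many open charts $U_\az$ carrying local unitary frames for $\EE$ itself, triangulate subordinate to this cover, and integrate by parts on each triangle to collect winding numbers of $\det(t_{\az\beta})$ along interior edges; this is more bookkeeping-heavy but elementary and topologically equivalent to the determinant argument above.
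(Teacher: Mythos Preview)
Your proof is correct and complete. The reduction to $\det\EE$ via the trace identity $\trace(\omega\wedge\omega)=0$ is the standard Chern--Weil move, and the rank-one computation via Stokes' theorem on $D$ and $\Tt^2\setminus D$ is carried out cleanly; the identification of $\Tt^2\setminus\{\zeta_0\}$ with a space having vanishing $H^2$ is exactly what is needed to trivialize $L$ there.

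By way of comparison: the paper does not actually prove this lemma. It simply asserts that the statement ``is well-known is analog to the Gauss Bonnet theorem'' and moves on to the topological-invariance consequence. So you have supplied a genuine proof where the paper offers only a reference to folklore. Your argument is precisely one of the standard proofs of this fact (the other common route being the \v{C}ech-cocycle / transition-function approach you sketch at the end), and the concern you flag about matching the induced connection on $\det\EE$ to the Grassmann connection's trace is real but minor: it follows from the Leibniz-rule computation you already wrote down, since the Grassmann connection is a bona fide connection and the induced connection on the top exterior power of any connection has connection form $\trace\omega$ in any local frame.
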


This is well-known is analog to the Gauss Bonnet theorem. The bundle $\EE$ is fully characterized by the family of projectors $\Pi$. Let $\EE_\delta \rightarrow \Tt^2$ be a family of bundles satisfying \eqref{eq:1zg} and  depending smoothly on $\delta \in (0,\delta_0)$ -- in the sense that the bundle with fiber $\EE_{\delta,\zeta}$ over $\Tt^2 \times (0,\delta_0)$ is smooth. Then: the associated projectors $\Pi_\delta(\zeta)$; the traces of Berry curvatures $B_\delta(\zeta)$; and the Chern classes $c_1(\EE_\delta)$ depend smoothly on $(\delta,\zeta)$. Since $c_1(\EE_\delta) $ lives in a discrete set, it is constant. In other words, $c_1(\EE)$ is a topological invariant of $\EE$.

 To compute $c_1(\EE)$, we can pick any unitary frame $\Psi$ of $\Ee$ and apply \eqref{eq:1zh} combined with \eqref{eq:1zb}. This yields:
 \begin{equation}
 c_1(\EE) = \dfrac{1}{2\pi i} \int_{[0,2\pi]^2} \Tr\Big( \Pi \big[ \nabla_\xi \Pi, \p_t \Pi \big] \Big)(\xi,t) \cdot d\xi dt.
 \end{equation}

\subsection{Bulk invariant}\label{sec:8.4}

We define here the bulk invariant associated to the family $P(\xi,t) = D_x^2 + V + W_t  :  L^2_\xi \rightarrow L^2_\xi$, where $V$, $W$ satisfy \eqref{eq:9v} and  $\oA$, $\oB$. We denote by $\lambda_{s,1}(\xi,t) \leq \dots \leq \lambda_{s,\ell}(\xi,t) \leq \dots$ its $L^2_\xi$-eigenvalues.  Let $\Pi_s(\xi,t) : L^2_\xi \rightarrow L^2_\xi$ be the orthogonal projector  to
\begin{equation}\label{eq:1a}
\bigoplus_{j=1}^n \ker\big(P_s(\xi,t)-\lambda_{s,j}(\xi,t)\big).
\end{equation}
Because of $\oA$, the $n$-th eigenvalue of $P_1(\pi,t)$ is simple. Since $n$ is odd and using the monotonicity properties of the dispersion curves, we have
\begin{equation}\label{eq:1e}
s\in (0,1] \ \Rightarrow \ 
\lambda_{s,n}(\xi,t) \leq \lambda_{s,n}(\pi,t) < \lambda_{s,n+1}(\pi,t) \leq \lambda_{s,n+1}(\xi,t).
\end{equation}
In particular, the first $n$-th eigenvalues of $P_s(\xi,t)$ are separated from all other eigenvalues of $P_s(\xi,t)$. Hence, $\Pi_s(\xi,t)$
 varies smoothly with $(\xi,t)$ -- in the sense that $e^{-i\xi x} \Pi_s(\xi,t) e^{i\xi x}$ is a smooth family of operators on $L^2_0$ -- see \cite[\S VII.1.3, Theorem 1.7]{Ka}. Because $P_s(\xi,t)$ depends periodically on $(\xi,t)$, we can define a smooth rank-$n$ bundle $\EE_s$ over $\Tt^2 = \R^2/(2\pi\Z)^2$ whose fiber at $(\xi,t)$ is the space \eqref{eq:1a}. The bulk invariant of $P_s$ is $c_1(\EE_s)$.

Because of \cite[\S VII.1.3, Theorem 1.7]{Ka} and of \eqref{eq:1e}, the projector $\Pi_s(\xi,t)$ depends analytically on $s \in (0,1], \xi$ and $t$. Therefore $c_1(\EE_s)$ is a continuous function of $s$. It follows that $c_1(\EE_s)$ is constant. We deduce that
\begin{equation}
c_1(\EE_1) = \lim_{\delta \rightarrow 0} c_1(\EE_\delta).
\end{equation}
In other words, it suffices to compute $c_1(\EE_\delta)$ in the limit $\delta \rightarrow 0$ in order to prove Theorem \ref{thm:2}.

\section{Computation of the bulk index}\label{sec:5}  

In this section, we compute the bulk index. We fix $n$ an odd integer and $(V,W)$ a pair of periodic potentials satisfying \eqref{eq:9v} together with $\oA$ and $\oB$. Under these conditions, the $n$-th $L^2_\pi$-eigenvalue of $D_x^2+V$ corresponds to a Dirac point $(\pi,E_\star)$ -- see \S\ref{sec:2.4}; and the bundle $\EE$ of \S\ref{sec:8.4} is correctly defined. Theorem \ref{thm:2} expresses the first Chern class of the bundle $\EE$ in terms of $W$ and the Dirac eigenbasis $(\phi_+^\star,\phi_-^\star)$ associated to $(\pi,E_\star)$:
\begin{equation}\label{eq:1c}
c_1(\EE) = \dfrac{(-1)^{\frac{n-1}{2}}}{2\pi i} \int_0^{2\pi} \dfrac{\var'(t)}{\var(t)} dt, \ \ \ \ \  \ \var(t) \de \blr{\phi_-^\star,W_t \phi_+^\star}.
\end{equation}

We briefly explain the proof. 
Because of topological invariance of Chern classes, deforming $P(\xi,t)$ to $P_{\delta}(\xi,t) = D_x^2 + V + \delta W_t$ preserves the Chern class: $c_1(\EE) = c_1(\EE_\delta)$. In the asymptotic regime $\delta \rightarrow 0$, we derive estimates for the Berry curvature of $\EE_\delta$,
\begin{equation}
B_\delta(\xi,t) \de \Tr\left( \Pi_\delta(\xi,t) \big[ \nabla_\xi\Pi_\delta(\xi,t), \p_t  \Pi_\delta(\xi,t) \big] \right), \ \ \ \ (\xi,t) \in [0,2\pi]^2.
\end{equation}
Above, $\Pi_\delta(\xi,t)$ is the $L^2_\xi$-projector to the space spanned by the $n$ first $L^2_\xi$-eigenvectors of $P_\delta(\xi,t)$. We show:
\begin{itemize}
\item  $B_\delta(\xi,t)$ is negligible when $\xi$ is away from $\pi$ -- Lemma \ref{lem:1f};
\item For $\xi$ near $\pi$ and $\delta$ small, $P_{\delta}(\xi,t)$ behaves like a $2 \times 2$  tight-binding model with Hamiltonian $\Mm_\delta(\xi,t)$  -- Lemma \ref{lem:1b}; 
\item After rescaling, $B_\delta(\xi,t)$ approaches the Berry curvature of the low-energy eigenbundle associated to $\Mm_\delta(\xi,t)$ for $\xi$ near $\pi$ -- Lemma \ref{lem:2b}. 
\item The family $\Mm_\delta(\xi,t)$ is linear in $\xi$ and depends on $t$ only through $\var(t)$. Its low-energy Chern number is well-defined as the integral of the corresponding Berry curvature. It equals the winding number appearing in \eqref{eq:1c} -- Lemma \ref{lem:2c}.
\end{itemize}
These statements are conform with the physical intuition: $B_\delta(\xi,t)$ is greatest near degeneracies; and near such singularities, a tight-binding model captures the dominant effects. Along the proof, we will need some simple estimates about the spectrum of $P_{\delta}(\xi,t)$. These are stated and proved in Appendix \ref{sec:7.2}.

\begin{center}
\begin{figure}
{\caption{The operator $D_x^2+V$ has a Dirac point at $(\pi,E_\star)$: $E_\star$ is an $L^2_\pi$-eigenvalue of $D_x^2+V$ of multiplicity $2$. The corresponding dispersion curves are not smooth: see the left graph. Adding the potential $\delta W_t$ opens a small gap near the energy $E_\star$; the corresponding dispersion curves are smooth but their second order derivatives blow up at $\xi = \pi$ as $\delta$ goes to $0$: see the right graph.}\label{fig:5}}
{\includegraphics{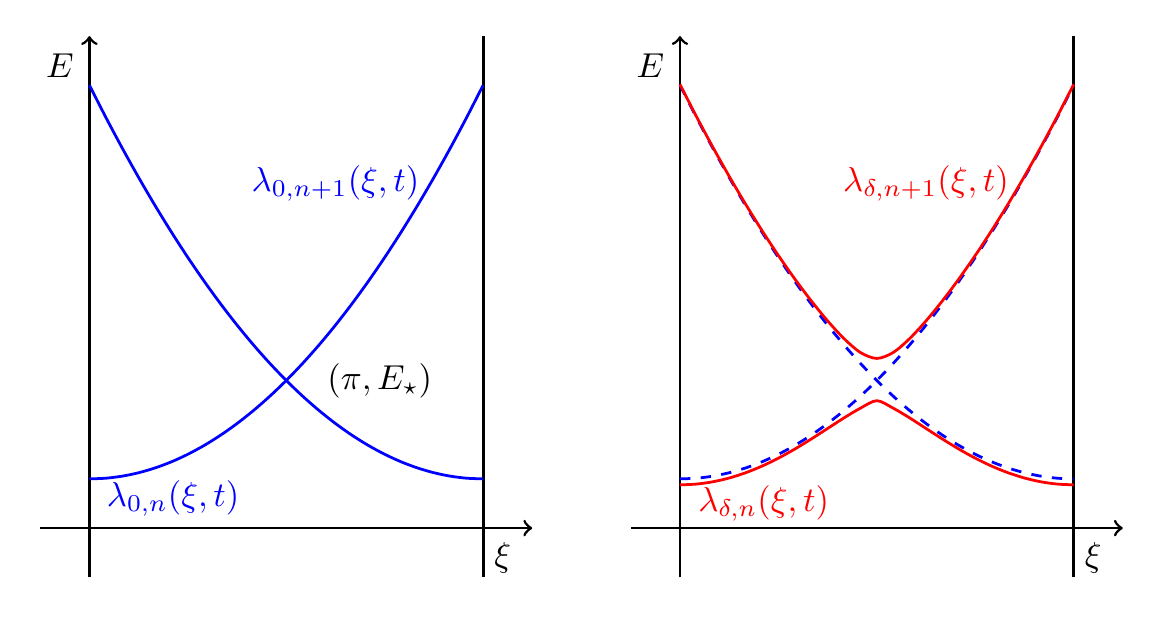}}
\end{figure}
\end{center}
\vspace*{-.9cm}

\subsection{Reduction to $\xi$ near $\pi$} 

\begin{lem}\label{lem:1d} For every $\epsi > 0$, there exists $C > 0$ such that
\begin{equation}\label{eq:1f}
t \in \R, \ \ \xi \in [0,\pi - \epsi] \cup [\pi+\epsi,2\pi], \ \delta \in (0,1] \ \Rightarrow \ B_\delta(\xi,t) \leq C\delta.
\end{equation}
\end{lem}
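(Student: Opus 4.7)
The plan is to split by the size of $\delta$ and argue perturbatively for small $\delta$ and by compactness for the rest.

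\textbf{Perturbative regime.} For $\xi \in [0,\pi-\epsi] \cup [\pi+\epsi, 2\pi]$, the unperturbed operator $D_x^2+V$ has its $n$-th and $(n+1)$-th $L^2_\xi$-eigenvalues separated by some $\eta = \eta(\epsi) > 0$: by Lemma \ref{lem:1u}, the only Dirac points of $D_x^2+V$ occur at $\xi = \pi$, and the monotonicity \eqref{eq:3h} forces the eigenvalues to stay apart off of $\pi$. Lemma \ref{lem:1m} then yields $\delta_0 > 0$ such that for $\delta \in (0,\delta_0]$, the corresponding spectral gap of $P_\delta(\xi, t) = D_x^2 + V + \delta W_t$ remains $\geq \eta/2$ uniformly in $t$ and in $\xi$ on the restricted set. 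Fix a contour $\gamma \subset \C$ enclosing the first $n$ eigenvalues of $P_\delta(\xi,t)$ at distance $\geq \eta/4$ from $\Sigma_{L^2_\xi}(P_\delta(\xi,t))$, uniformly in $(\xi, t, \delta)$, so the Riesz projector
\begin{equation}
\Pi_\delta(\xi, t) = \frac{1}{2\pi i}\oint_\gamma \bigl(z - P_\delta(\xi,t)\bigr)^{-1}\,dz
\end{equation}
matches the projector from \S\ref{sec:8.4}. Differentiating in $t$, we obtain
\begin{equation}
\p_t \Pi_\delta = \frac{\delta}{2\pi i} \oint_\gamma (z - P_\delta)^{-1}(\p_t W_t)(z - P_\delta)^{-1}\,dz,
\end{equation}
and since $\p_t W_t$ is uniformly bounded and the resolvents are bounded by $4/\eta$ on $\gamma$, $\|\p_t \Pi_\delta\|_{L^2_\xi} = O(\delta)$.

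After the gauge conjugation $\tilde P_\delta = e^{-i\xi x} P_\delta e^{i\xi x} = (D_x+\xi)^2 + V + \delta W_t$ on the fixed space $L^2_0$, an analogous Riesz computation with $\p_\xi \tilde P_\delta = 2(D_x + \xi)$ gives $\|\nabla_\xi \Pi_\delta\|_{L^2_\xi} = O(1)$: the first-order operator $2(D_x+\xi)$ becomes $L^2_0$-bounded after composition with one resolvent of $\tilde P_\delta$, which smooths by two derivatives. Using the trace expression from Lemma \ref{lem:1zzb} together with the fact that $\Pi_\delta$ has rank $n$ (so $\|\Pi_\delta X\|_1 \leq n\|X\|_{L^2_\xi}$), I conclude
\begin{equation}
|B_\delta(\xi,t)| \leq n\,\|[\nabla_\xi \Pi_\delta, \p_t \Pi_\delta]\|_{L^2_\xi} \leq 2n\,\|\nabla_\xi \Pi_\delta\|_{L^2_\xi} \cdot \|\p_t \Pi_\delta\|_{L^2_\xi} = O(\delta).
\end{equation}

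\textbf{Compactness regime.} For $\delta \in [\delta_0, 1]$, assumption $\oA$ combined with \eqref{eq:1e} implies that $\Pi_\delta(\xi,t)$ is smoothly defined and $B_\delta$ is continuous on the compact set $[\delta_0, 1] \times ([0,\pi-\epsi] \cup [\pi+\epsi, 2\pi]) \times \Tt^1$. Hence $|B_\delta|$ is bounded there by some $C_0 > 0$, and $|B_\delta| \leq C_0 \leq (C_0/\delta_0)\,\delta$. Combining the two regimes yields the claimed estimate.

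\textbf{Main obstacle.} The crux is the uniformity of the resolvent bound on $\gamma$ in all three parameters $(\xi, t, \delta)$: the argument succeeds precisely because we stay at distance $\epsi$ from $\xi = \pi$. Near $\xi = \pi$ the gap of $P_\delta$ is only $O(\delta)$, so the resolvent blows up like $1/\delta$ and the Berry curvature is in fact of order $1/\delta$ there. This concentration near the Dirac point is not a defect of the proof but the very phenomenon that the subsequent tight-binding reduction (Lemmas \ref{lem:1b}--\ref{lem:2c}) exploits to recover the Chern number \eqref{eq:1c}.
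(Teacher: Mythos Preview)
Your proof is correct and hinges on the same observation as the paper's: since $P_0(\xi,t) = D_x^2 + V$ does not depend on $t$, one has $\p_t \Pi_\delta = \OO_{L^2_\xi}(\delta)$, and combining this with $\nabla_\xi \Pi_\delta = \OO_{L^2_\xi}(1)$ and the finite rank of $\Pi_\delta$ yields $B_\delta = O(\delta)$. The paper's execution is a bit more economical: instead of splitting into a perturbative regime $\delta \in (0,\delta_0]$ and a compactness regime $\delta \in [\delta_0,1]$, it notes directly (via \cite[\S VII.1.3, Theorem 1.7]{Ka}) that $(\delta,\xi,t) \mapsto \Pi_\delta(\xi,t)$ is smooth on all of $[0,1] \times \big([0,\pi-\epsi] \cup [\pi+\epsi,2\pi]\big) \times \R$, including at $\delta = 0$. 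Compactness then gives $\Pi_\delta - \Pi_0 = \OO_{L^2_\xi}(\delta)$ together with its $\xi$- and $t$-derivatives, and $\p_t \Pi_\delta = \p_t(\Pi_\delta - \Pi_0) = \OO_{L^2_\xi}(\delta)$ follows for every $\delta \in (0,1]$ in one stroke, without any explicit Riesz-projector differentiation or contour bookkeeping.
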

 
\begin{proof} 1. We observe that $\Pi_0(\xi,t)$ is well defined when $\xi \notin \pi+2\pi\Z$: the eigenvalue $\lambda_{0,n}(\xi,t)$ is degenerated only at $\xi=\pi$. Therefore, the family of projectors
\begin{equation}
(\delta,\xi,t) \in [0,1] \times [0,2\pi] \setminus \{\pi\} \times \R \mapsto \Pi_\delta(\xi,t) 
\end{equation}
is smooth -- \cite[\S VII.1.3, Theorem 1.7]{Ka}. If $\epsi > 0$ is given then $[0,\pi - \epsi] \cup [\pi+\epsi,2\pi]$ is a compact subset of $[0,2\pi] \setminus \{\pi\}$. It follows that $\Pi_\delta(\xi,t) - \Pi_0(\xi,t) = \OO_{L^2_\xi}(\delta)$, together with its derivatives in $\xi$ and $t$. Moreover, $\Pi_0(\xi,t)$ does not depend on $t$. We deduce that in the range specified in \eqref{eq:1f},
\begin{equation}
\Pi_\delta(\xi,t) \big[ \nabla_\xi \Pi_\delta(\xi,t), \p_t \Pi_\delta(\xi,t) \big] = \OO_{L^2_\xi}(\delta).
\end{equation}
The operator in the LHS has rank bounded by $n$. Therefore we can the trace on both side while preserving the order of the order term. This yields 
\begin{equation}
\Tr\left(\Pi_\delta(\xi,t) \big[ \nabla_\xi \Pi_\delta(\xi,t), \p_t \Pi_\delta(\xi,t) \big]\right) = O(\delta),
\end{equation}
which completes the proof.
\end{proof}

\subsection{$\xi$ near $\pi$.} We now estimate the Berry curvature when $|\xi-\pi| \leq \delta^{3/4}$. Let $\{\phi_+(\xi), \phi_-(\xi)\}$ be the smoothly varying basis of eigenvectors of $P_{0}(\xi,t)$ near the Dirac point; see \S\ref{sec:2.4}. Define $J_0(\xi) : L^2_\xi \rightarrow \C^2$ by
\begin{equation}\label{eq:2l}
J_0(\xi) v \de \matrice{ \blr{\phi_+(\xi),v} \\  \blr{\phi_-(\xi),v}}.
\end{equation}
We introduce the $2 \times 2$ matrix
\begin{equation}
\Mm_\delta(\xi,t) \de \matrice{E_\star + \nu_\star(\xi-\pi) & \delta \ove{\var(t)} \\ \delta \var(t) & E_\star-\nu_\star(\xi-\pi)}.
\end{equation}
It has two distinct eigenvalues, 
\begin{equation}
\mu^{\pm}_\delta(\xi,t) \de E_\star \pm r_\delta(\xi,t), \ \ \ \ r_\delta(\xi,t) \de \sqrt{\nu_\star^2(\xi-\pi)^2 + \delta^2 |\var(t)|^2}.
\end{equation}
The next lemma shows that in a certain energy regime, $P_{\delta}(\xi,t)$ behaves very much like $\Mm_\delta(\xi,t)$. A similar result was used in the proof of \cite[Theorem 4]{DFW}: see \cite[Proposition 4.2]{DFW}. 

\begin{lem}\label{lem:1b} There exists $\delta_0 > 0$ and $\epsi_0 \in (0,1)$ such that
\begin{equations}
\delta \in (0,\delta_0), \ \ |\xi-\pi| \leq \epsi_0, \ \ t \in \R \ \ 
z \in \p\BBb\left(\mu_\delta^\pm(\xi,t),r_\delta(\xi,t)\right)
\\
\Rightarrow \
\big( P_{\delta}(\xi,t)-z \big)^{-1} = J_0(\xi)^* \cdot \big(\Mm_\delta(\xi,t)-z \big)^{-1}\cdot J_0(\xi) + \OO_{L^2_\xi}(1).
\end{equations}
\end{lem}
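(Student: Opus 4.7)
\textbf{Proof sketch of Lemma \ref{lem:1b}.}

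The plan is a standard Feshbach--Schur (Grushin) reduction to the two-dimensional ``quasi-resonant'' subspace spanned by $\phi_+(\xi),\phi_-(\xi)$. Let $\Pi_\parallel(\xi) = J_0(\xi)^* J_0(\xi)$ be the orthogonal projector onto $\mathrm{span}(\phi_+(\xi),\phi_-(\xi))$ and $\Pi_\perp(\xi) = \Id - \Pi_\parallel(\xi)$. In this decomposition, $P_\delta(\xi,t)-z$ becomes a $2\times 2$ block operator with blocks
\begin{equation}
A_{\parallel\parallel}, \quad A_{\perp\perp}, \quad A_{\parallel\perp}=\delta\,\Pi_\parallel W_t \Pi_\perp, \quad A_{\perp\parallel}=A_{\parallel\perp}^*,
\end{equation}
where the off-diagonal blocks have operator norm $O(\delta)$ because $P_0(\xi)$ is block-diagonal. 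I would then verify three estimates: (i) $A_{\perp\perp}$ is uniformly invertible with $\|A_{\perp\perp}^{-1}\|=O(1)$; (ii) in the basis $\{\phi_+(\xi),\phi_-(\xi)\}$, $A_{\parallel\parallel}$ equals $\Mm_\delta(\xi,t)-z$ plus an error of size $O(r_\delta^2)$; and (iii) $z$ lies at distance $r_\delta$ from both eigenvalues of $\Mm_\delta(\xi,t)$, so $\|(\Mm_\delta-z)^{-1}\|\leq 1/r_\delta$.

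For (i), the spectrum of $P_0(\xi)\restriction\Pi_\perp(\xi)L^2_\xi$ is separated from $E_\star$ by a fixed positive gap for $\xi$ near $\pi$ (this is exactly the content of the spectral estimates in Appendix \ref{sec:7.2}; only $\lambda_\pm(\xi)$ accumulate at $E_\star$ as $\xi\to\pi$). Since $z=E_\star + O(r_\delta)=E_\star+O(\epsi_0+\delta)$ on the circles considered, shrinking $\epsi_0$ and $\delta$ keeps $A_{\perp\perp}$ boundedly invertible via a Neumann series against $\delta W_t$. For (ii), Lemma \ref{lem:1c} gives $\lambda_\pm(\xi)=E_\star\pm\nu_\star(\xi-\pi)+O(|\xi-\pi|^2)$, and Lemma \ref{lem:2z} yields that the matrix elements of $W_t$ in $\{\phi_+(\xi),\phi_-(\xi)\}$ agree with $\Mm_\delta/\delta$ at $\xi=\pi$ and have $O(|\xi-\pi|)$ corrections by smoothness of $\phi_\pm(\xi)$. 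The total error therefore has norm $O(|\xi-\pi|^2+\delta|\xi-\pi|)$, and both terms are $O(r_\delta^2)$: the first because $|\nu_\star|\,|\xi-\pi|\leq r_\delta$, and the second because $r_\delta^2\geq |\nu_\star|\,\delta\,|\var(t)|\,|\xi-\pi|$ with $|\var(t)|$ bounded below by assumption $\oB$.

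The Schur complement $S=A_{\parallel\parallel}-A_{\parallel\perp}A_{\perp\perp}^{-1}A_{\perp\parallel}$ then reads
\begin{equation}
S = \Mm_\delta(\xi,t)-z + O(r_\delta^2) + O(\delta^2),
\end{equation}
and since $\delta^2\leq r_\delta^2/|\var(t)|^2=O(r_\delta^2)$, we get $S = \Mm_\delta(\xi,t)-z + \OO(r_\delta^2)$. Writing $S=(\Mm_\delta-z)(\Id+(\Mm_\delta-z)^{-1}\cdot\OO(r_\delta^2))$, the inner factor has norm $O(r_\delta)$, small for $\epsi_0,\delta$ small, so $S$ is invertible and
\begin{equation}
S^{-1}-(\Mm_\delta-z)^{-1} = -(\Mm_\delta-z)^{-1}\cdot\OO(r_\delta^2)\cdot S^{-1} = \OO\bigl(r_\delta^{-1}\cdot r_\delta^2\cdot r_\delta^{-1}\bigr)=\OO(1).
\end{equation}

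Finally, the block-inversion formula expresses $(P_\delta-z)^{-1}$ in terms of $S^{-1}$, $A_{\perp\perp}^{-1}$, and the off-diagonal blocks. The identification of a $2\times 2$ operator on $\mathrm{Ran}\,\Pi_\parallel$ with $J_0^*(\cdot)J_0$ shows that the $\parallel\parallel$ block contributes $J_0^*(\Mm_\delta-z)^{-1}J_0+\OO_{L^2_\xi}(1)$. The remaining blocks are bounded: the off-diagonal corrections $S^{-1}A_{\parallel\perp}A_{\perp\perp}^{-1}$ satisfy $\|\cdot\|\leq (1/r_\delta)\cdot O(\delta)\cdot O(1)=O(\delta/r_\delta)=O(|\var(t)|^{-1})=O(1)$ thanks again to $\oB$, and the $\perp\perp$ correction $A_{\perp\perp}^{-1}A_{\perp\parallel}S^{-1}A_{\parallel\perp}A_{\perp\perp}^{-1}$ contributes $O(\delta^2/r_\delta)=O(\delta)$. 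Collecting these terms yields the claimed identity. The main technical obstacle is the careful bookkeeping of the $O(r_\delta^2)$ error in step (ii); verifying that both $(\xi-\pi)^2$ and $\delta|\xi-\pi|$ can be absorbed into $r_\delta^2$ is precisely where the non-degeneracy hypothesis $\oB$ (through the lower bound on $|\var(t)|$) is essential.
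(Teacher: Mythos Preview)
Your proof is correct and follows essentially the same Feshbach--Schur strategy as the paper, with the same key ingredients: the $O(r_\delta^2)$ error on the parallel block (via Lemmas~\ref{lem:1c} and~\ref{lem:2z}), the uniform invertibility of the perpendicular block (Lemma~\ref{lem:1h}), and the resolvent bound $\|(\Mm_\delta-z)^{-1}\|=O(r_\delta^{-1})$. The only cosmetic difference is that the paper takes the Schur complement with respect to the parallel block $A_{\parallel\parallel}$ (inverting it first via the Neumann series you describe), whereas you take it with respect to the perpendicular block; the estimates and the final assembly are otherwise identical.
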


\begin{proof} 1. We use $\zeta = (\xi,t)$. We prove the lemma for $\mu_\delta^-(\zeta)$ only -- the proof for $\mu_\delta^+(\zeta)$ is identical. Let $\VVV(\xi) \subset L^2_\xi$ be the two-dimensional vector space
\begin{equation}\label{eq:6d}
\VVV(\xi) \de \C \phi_+(\xi) \oplus \C \phi_-(\xi) \subset L^2_\xi.
\end{equation}
We write $P_{\delta}(\zeta)-z$ as a block matrix with respect to the splitting $L^2_\xi = \VVV(\xi) \oplus \VVV(\xi)^\perp$:
\begin{equation}
P_{\delta}(\zeta) - z \de \matrice{A_\delta(\zeta,z) & B_\delta(\zeta) \\ C_\delta(\zeta) & D_\delta(\zeta,z)}.
\end{equation}
Above, $A_\delta(\zeta,z) : \VVV(\xi) \rightarrow \VVV(\xi)$; $B_\delta(\zeta) : \VVV(\xi)^\perp \rightarrow \VVV(\xi)$; $C_\delta(\zeta) = B_\delta(\zeta)^* : \VVV(\xi) \rightarrow \VVV(\xi)^\perp$; and $D_\delta(\zeta,z) : \VVV(\xi)^\perp \rightarrow \VVV(\xi)^\perp$.
Schur's complement formula computes inverses of block by block operators under suitable conditions -- see \cite[Lemma 4.1]{DFW}. Here these conditions are:
\begin{equations}
A_\delta(\zeta,z) \ : \ \VVV(\xi) \rightarrow \VVV(\xi) \ \text{ is invertible and } 
\\
D_\delta(\zeta,z) - C_\delta(\zeta)A_\delta(\zeta,z)^{-1}B_\delta(\zeta) \ : \ \VVV(\xi)^\perp \rightarrow \VVV(\xi)^\perp \ \text{ is invertible.}
\end{equations}
We show that they hold in Steps 2 and 3 below. 

2. Let $I_0(\xi) : \VVV(\xi) \rightarrow \C^2$ be the coordinate map in the basis $\{\phi_+(\xi),\phi_-(\xi)\}$. The operator $I_0(\xi)$ has the same expression \eqref{eq:2l} as $J_0(\xi)$ -- though $v \in \VVV(\xi)$ instead of $L^2_\xi$. Moreover $I_0(\xi)$ is invertible, and
\begin{equation}\label{eq:5f}
A_\delta(\zeta,z) = I_0(\xi)^{-1} \matrice{\lambda_+(\xi)-z & \delta \blr{\phi_+(\xi),W_t\phi_-(\xi)} \\  \delta \blr{\phi_-(\xi),W_t\phi_+(\xi)} & \lambda_-(\xi)-z } I_0(\xi).
\end{equation}
To prove \eqref{eq:5f}, we observe that $\blr{\phi_\pm(\xi),W_t \phi_\pm(\xi)} = 0$ from Lemma \ref{lem:2z}.

We show that $A_\delta(\zeta,z)$ is invertible. We recall that $\lambda_{\pm}(\xi) = E_\star \pm \nu_\star(\xi-\pi) + O(\xi-\pi)^2$ and we observe that
\begin{equation}
\delta \blr{\phi_-(\xi),W_t\phi_+(\xi)} = \delta \blr{\phi_-^\star,W_t\phi_+^\star} + O\big(\delta(\xi-\pi)\big) = \delta \var(t) + O\left(r_\delta(\zeta)^2\right).
\end{equation}
Hence
\begin{equations}\label{eq:1i}
A_\delta(\zeta,z) = I_0(\xi)^{-1} \big( \Mm_\delta(\zeta)-z\big) I_0(\xi) + \OO_{\VVV(\xi)}\left(r_\delta(\zeta)^2\right).
\end{equations}
The matrix $\Mm_\delta(\zeta)$ is selfadjoint. Therefore, a bound on $(\Mm_\delta(\zeta)-z)^{-1}$ follows from an estimate on the distance between $z$ and the spectrum of $\Mm_\delta(\zeta)$.
Observe that $|\mu_\delta^+(\zeta)-\mu_\delta^-(\zeta)| = 2 r_\delta(\zeta)$; therefore,
\begin{equation}
|z-\mu_\delta^-(\zeta)| = r_\delta(\zeta) \ \Rightarrow \ |z-\mu_\delta^+(\zeta)| \geq r_\delta(\zeta).
\end{equation}
It follows that,
\begin{equation}\label{eq:1j}
z \in \p\BBb\big(\mu_\delta^-(\zeta),r_\delta(\zeta)\big) \ \Rightarrow \ \left\|(\Mm_\delta(\zeta)-z)^{-1}\right\|_{\C^2} = O\big( r_\delta(\zeta)^{-1} \big). 
\end{equation}
Because of a perturbation argument based on \eqref{eq:1i}, \eqref{eq:1j} and a Neumann series, when $z \in \p\BBb(\mu_\delta^-(\zeta),r_\delta(\zeta))$ and $|\xi-\pi| + \delta$ is sufficiently small, $A_\delta(\zeta,z)$ is invertible. This comes with the estimates
\begin{equations}
A_\delta(\zeta,z)^{-1} = I_0(\xi)^{-1} \big( \Mm_\delta(\zeta)-z \big)^{-1} I_0(\xi) + \OO_{\VVV(\xi)}(1), \\ 
\left\| A_\delta(\zeta,z)^{-1} \right\|_{\VVV(\xi)} = O\left(r_\delta(\zeta)^{-1} \right),
\end{equations}
valid for $|\xi-\pi|$ and $\delta$ small enough.

3. The operator $B_\delta(\zeta) : \VVV(\xi)^\perp \rightarrow \VVV(\xi)$ is the composition of the embedding $\VVV(\xi)^\perp \rightarrow L^2_\xi$, the multiplication $\delta W_t$ and the projection $L^2_\xi \rightarrow \VVV(\xi)$. Therefore $B_\delta(\zeta) = \OO_{\VVV(\xi)^\perp \rightarrow \VVV(\xi)}(\delta)$. Since $C_\delta(\zeta) = B_\delta(\zeta)^*$, $C_\delta(\zeta) = \OO_{\VVV(\xi) \rightarrow \VVV(\xi)^\perp}(\delta)$. Step 2 implies
\begin{equations}
D_\delta(\zeta,z) - C_\delta(\zeta)A_\delta(\zeta,z)^{-1}B_\delta(\zeta)  = D_\delta(\zeta,z) + \OO_{\VVV(\xi)^\perp}(\delta)
\\
= (P_{0}(\zeta)-z)\big|_{\VVV(\xi)^\perp} + \OO_{\VVV(\xi)^\perp}(\delta).
\end{equations}
Above $(P_{0}(\zeta)-z)|_{\VVV(\xi)^\perp}$ is seen as an operator $\VVV(\xi)^\perp \rightarrow \VVV(\xi)^\perp$. It is invertible with inverse $\OO_{\VVV(\xi)^\perp}(1)$ when $z$ is sufficiently far from $E_\star$. This is because all eigenvalues of $P_{0}(\zeta)$ but the $n$-th and $n+1$-th must be at distance of order $1$ from $E_\star$ -- see Lemma \ref{lem:1h}. Once again, a Neumann series argument shows that $D_\delta(\zeta,z) - C_\delta(\zeta)A_\delta(\zeta,z)^{-1}B_\delta(\zeta)$ is also invertible with inverse $\OO_{\VVV(\xi)^\perp}(1)$.

4. We conclude that the inverse of $P_{\delta}(\zeta)-z$ exists and is given by Schur's complement formula:
\begin{equation}
\big( P_{\delta}(\zeta)-z \big)^{-1} = \matrice{A_\delta(\zeta,z)^{-1} & 0 \\ 0 & 0}  + \OO_{L^2_\xi}(1).
\end{equation}
The lemma follows.
\end{proof}

Introduce $\MM(\xi,t)$ the $2 \times 2$ matrix 
\begin{equation}\label{eq:2u}
\MM(\xi,t) \de \dfrac{1}{\delta} \left(\Mm_\delta\left(\pi+\dfrac{\delta \xi}{\nu_\star},t\right)-E_\star\right) = \matrice{\xi & \ove{\var(t)} \\ \var(t) & -\xi}.
\end{equation}
It has eigenvalues $\pm \sqrt{\xi^2+|\var(t)|^2}$. Let $\LL \rightarrow \R \times \Tt^1$ be the line bundle whose fiber at $(\xi,t)$ is the eigenspace 
\begin{equation}\label{eq:1y}
\ker_{\C^2}\big(\Mm_\delta(\xi,t) + \sqrt{\xi^2+|\var(t)|^2}\big)
\end{equation}
Following \S\ref{sec:4}, we define the Berry curvature $(\xi,t) \in \R \times \Tt^1 \mapsto \BB(\xi,t)$ of $\LL$. Since $\R \times \Tt^1$ is a non-compact manifold, it is not immediate that we can integrate $\BB$. 

\begin{lem}\label{lem:2c} The orthogonal projector $\pi^-(\xi,t) : \C^2 \rightarrow \C^2$ to the space \eqref{eq:1y} satisfies
\begin{equation}\label{eq:1u}
\sup_{\R \times [0,2\pi]} \Big\{ \big\| \p_\xi \pi^-(\xi,t) \big\|_{\C^2} + \big\| \p_t \pi^-(\xi,t) \big\|_{\C^2} \Big\} < \infty.
\end{equation}
Moreover, $\BB$ is integrable on $\R \times [0,2\pi]$ and
\begin{equation}
\dfrac{1}{2\pi i}\int_{\R \times [0,2\pi]} \BB(\xi,t) \cdot d\xi dt = \dfrac{1}{2\pi i} \int_0^{2\pi} \dfrac{\var'(t)}{\var(t)} dt.
\end{equation}
\end{lem}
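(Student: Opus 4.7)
My approach is to recognize $\LL$ as a standard two-band Bloch bundle and carry out the Berry curvature calculation in closed form using the $\mathbf{d}$-vector formalism. The matrix $\MM(\xi,t)$ can be written as $d_1\bsu+d_2\bsd+d_3\bst$ with
\begin{equation*}
\mathbf{d}(\xi,t) = \big(\Re\var(t),\, -\Im\var(t),\, \xi\big) \in \R^3,\qquad r(\xi,t) \de |\mathbf{d}(\xi,t)| = \sqrt{\xi^2+|\var(t)|^2},
\end{equation*}
so the lower-band projector is $\pi^-(\xi,t) = \tfrac{1}{2}\bigl(I_2 - r(\xi,t)^{-1}\MM(\xi,t)\bigr)$. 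A direct computation applying the commutator identity among $\bsu,\bsd,\bst$ once to $[\p_\xi\pi^-,\p_t\pi^-]$ and the trace identity $\Tr(\bsi\bsj)=2\delta_{ij}$ once more for the outer trace yields the familiar two-band formula
\begin{equation*}
\BB(\xi,t) = \Tr\bigl(\pi^-[\p_\xi\pi^-,\p_t\pi^-]\bigr) = \frac{i}{2\,r(\xi,t)^3}\,\mathbf{d}\cdot(\p_\xi\mathbf{d}\times\p_t\mathbf{d}) = \frac{i\,\Im(\overline{\var(t)}\var'(t))}{2\bigl(\xi^2+|\var(t)|^2\bigr)^{3/2}}.
\end{equation*}

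\textbf{Uniform bound and integrability.} Hypothesis $\oB$ together with compactness of $\Tt^1$ gives $c_0 \de \min_{t\in\Tt^1}|\var(t)| > 0$, so $r(\xi,t) \geq c_0$ uniformly on $\R \times \Tt^1$. The entries of $\pi^-$ are rational in $(\xi,\Re\var,\Im\var)$ with denominator $r$, hence their $\xi$- and $t$-derivatives are rational with denominators $r^p$, $p \leq 3$. Bounded numerators (from smoothness and periodicity of $\var$) and the uniform lower bound $r \geq c_0$ then give the sup bound \eqref{eq:1u}. The same formula for $\BB$ yields the pointwise estimate $|\BB(\xi,t)| \leq C(\xi^2+c_0^2)^{-3/2}$, which is integrable over $\R$, hence $\BB \in L^1(\R \times [0,2\pi])$.

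\textbf{Evaluating the integral.} The elementary integral $\int_\R (\xi^2+a^2)^{-3/2}\,d\xi = 2/a^2$ applied with $a=|\var(t)|$ reduces the double integral to a one-dimensional one:
\begin{equation*}
\frac{1}{2\pi i}\int_{\R \times [0,2\pi]}\BB\,d\xi\,dt = \frac{1}{2\pi}\int_0^{2\pi}\frac{\Im(\overline{\var(t)}\var'(t))}{|\var(t)|^2}\,dt = \frac{1}{2\pi}\int_0^{2\pi}\Im\!\left(\frac{\var'(t)}{\var(t)}\right)dt.
\end{equation*}
Since $\var$ is nowhere vanishing, $\int_0^{2\pi}\var'/\var\,dt = 2\pi i\,m$ by the definition of the winding number, and taking imaginary parts produces $m$, as claimed. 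The single delicate point in the argument is the sign bookkeeping in the first step: the choice of lower vs.\ upper band, the orientation of the $(\p_\xi,\p_t)$-volume, and the specific convention for $\bsd$ in this paper must all be tracked consistently to recover $+m$ rather than $-m$. The analytic content---the uniform bound \eqref{eq:1u} and integrability of $\BB$---is entirely a consequence of the positive lower bound $|\var|\geq c_0 > 0$ guaranteed by $\oB$.
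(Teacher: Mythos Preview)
Your proof is correct and follows essentially the same route as the paper: both use the two-band $\mathbf{d}$-vector formula for the Berry curvature (the paper cites \cite[(23)]{FC} for it, you derive it from the Pauli commutators), then integrate $(\xi^2+|\var(t)|^2)^{-3/2}$ over $\xi$ and identify the remaining $t$-integral with the winding number. The only minor difference is that for the derivative bound \eqref{eq:1u} the paper writes $\pi^-$ as a contour integral and differentiates under the integral sign, whereas your direct differentiation of $\pi^- = \tfrac{1}{2}(I_2 - r^{-1}\MM)$ is a bit more economical.
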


The proof of this lemma is a calculation postponed to Appendix \ref{sec:7.4}. 
Because of \eqref{eq:2u} and \eqref{eq:1u}, we deduce that the orthogonal projector $\pi_\delta^-(\xi,t) : \C^2 \rightarrow \C^2$ to the negative eigenspace of $\Mm_\delta(\xi,t)$ satisfies
\begin{equation}\label{eq:1v}
\sup_{\R \times [0,2\pi]} \big\| \p_\xi \pi_\delta^-(\xi,t) \big\|_{\C^2} = O\big(\delta^{-1}\big), \ \ \ \ \sup_{\R \times [0,2\pi]} \big\| \p_t \pi_\delta^-(\xi,t) \big\|_{\C^2} < \infty.
\end{equation}
For $(\xi,t) \in \R^2$, we let $\Bb_\delta(\xi,t)$ be the Berry curvature associated to the low-lying eigenbundle of $\Mm_\delta(\xi,t)$, i.e. associated to $\pi_\delta^-(\xi,t)$.

\begin{lem}\label{lem:2b} Let $\epsi_0$ be specified by Lemma \ref{lem:1b}.
There exists $\delta_0 > 0$ such that 
\begin{equations}\label{eq:6f}
\delta \in (0,\delta_0), \ \ |\xi-\pi| \leq \epsi_0, \ \ t \in \R 
 \ \ \ 
\Rightarrow \ \ \ 
B_\delta(\xi,t) = \Bb_\delta(\xi,t) + O(1).
\end{equations}
\end{lem}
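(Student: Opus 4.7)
The plan is to isolate from the rank-$n$ projector the single band of $P_\delta(\xi,t)$ responsible for the near-degeneracy at the Dirac point, and to show that only this band contributes to $B_\delta$ beyond $O(1)$. I split
$$\Pi_\delta(\xi,t) = \widetilde{\Pi}_\delta(\xi,t) + Q_\delta(\xi,t),$$
where $Q_\delta$ is the rank-$1$ spectral projector of $P_\delta(\xi,t)$ onto its $n$-th eigenvalue and $\widetilde{\Pi}_\delta$ is the rank-$(n-1)$ projector onto the first $n-1$ eigenvalues. By the gap estimate of Lemma \ref{lem:1h}, these first $n-1$ eigenvalues stay at distance $\geq c > 0$ from $E_\star$ uniformly in $(\delta,\xi,t)$, so representing $\widetilde{\Pi}_\delta$ as a resolvent integral over a fixed contour $\Gamma_1$ at distance $c/2$ from $E_\star$ yields $\nabla_\xi \widetilde{\Pi}_\delta = \OO_{L^2_\xi}(1)$ and $\p_t \widetilde{\Pi}_\delta = \OO_{L^2_\xi}(\delta)$ (the $t$-dependence of $P_\delta$ being scaled by $\delta$).

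I then represent $Q_\delta$ as the contour integral of $(z-P_\delta(\xi,t))^{-1}$ along $\gamma_\delta(\xi,t) = \p\Bb(\mu_\delta^-(\xi,t),r_\delta(\xi,t))$ and apply Lemma \ref{lem:1b}, which gives
$$Q_\delta(\xi,t) = J_0(\xi)^*\,\pi_\delta^-(\xi,t)\,J_0(\xi) + R_\delta(\xi,t),\qquad R_\delta = \OO_{L^2_\xi}(r_\delta).$$
To propagate this to one derivative I would not stop at the operator-norm estimate of Lemma \ref{lem:1b} but return to the Schur-complement expansion in its proof (equation \eqref{eq:1i}), differentiate it in $(\xi,t)$ block by block, and use that the off-diagonal blocks are $\OO(\delta)$ while the Schur complement has a uniformly bounded inverse on $\gamma_\delta$. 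A direct computation then produces $\nabla_\xi R_\delta, \p_t R_\delta = \OO_{L^2_\xi}(1)$.

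Armed with these bounds I expand
$$B_\delta = \Tr\bigl(\Pi_\delta\,[\nabla_\xi \Pi_\delta,\p_t \Pi_\delta]\bigr)$$
using $\Pi_\delta = \widetilde{\Pi}_\delta + Q_\delta$, the orthogonality $\widetilde{\Pi}_\delta Q_\delta = 0$, and the general identity $\Pi\,\nabla\Pi\,\Pi = 0$ valid for any smooth projector $\Pi$. Every term involving a derivative of $\widetilde{\Pi}_\delta$ is $O(1)$ by the first paragraph. The leading piece is $\Tr(Q_\delta[\nabla_\xi Q_\delta,\p_t Q_\delta])$; substituting $Q_\delta = J_0^*\pi_\delta^- J_0 + R_\delta$, using $\p_t J_0 = 0$ (since $J_0$ depends only on $\xi$), the identity $J_0 J_0^* = \Id_{\C^2}$, and cyclicity of the trace, this collapses to $\Tr\bigl(\pi_\delta^-[\p_\xi \pi_\delta^-, \p_t \pi_\delta^-]\bigr) = \Bb_\delta$, plus cross terms bounded uniformly in $(\xi,t,\delta)$ by the derivative estimates on $R_\delta$ combined with the sharp pointwise bounds $\|\p_\xi \pi_\delta^-\| = O(\delta/r_\delta^2)$ and $\|\p_t \pi_\delta^-\| = O(\delta/r_\delta)$, which follow from the explicit formula $\pi_\delta^- = (\Id_{\C^2} - \Mm_\delta/r_\delta)/2$.

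The principal obstacle is the uniform derivative control of $R_\delta$. Naive differentiation of the contour integral for $Q_\delta$ produces boundary contributions proportional to $\nabla r_\delta$ and $\nabla \mu_\delta^-$, which become singular as $(\xi,t)\to(\pi,\cdot)$ in unfavorable combinations. Returning to the Schur-complement parametrix avoids these moving-contour terms entirely: the block decomposition depends smoothly on $(\xi,t)$ and the integration variable $z$ enters only through the complementary-subspace resolvent, uniformly bounded on $\gamma_\delta$ together with its $(\xi,t)$-derivatives by a standard Neumann series argument. This is the delicate but mechanical step that supplies the missing ingredient in the outline above.
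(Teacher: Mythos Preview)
Your decomposition $\Pi_\delta=\widetilde\Pi_\delta+Q_\delta$ and the treatment of the first $n-1$ bands coincide with the paper's Step~1. The disagreement is in how you control derivatives of the rank-one piece, and there your outline has a real gap.

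First, the moving-contour worry is misplaced, and your proposed workaround does not resolve it. The paper invokes Lemma~\ref{lem:1a}: for a smooth family of meromorphic functions the value of a contour integral is a sum of residues, hence depends only on the integrand and not on the contour; one may therefore differentiate under the integral sign even though $\gamma_\delta$ moves. Your alternative --- going back to the Schur blocks and differentiating there --- still produces an expression you must integrate over $\gamma_\delta(\zeta)$ to obtain a projector, so it does not bypass the issue. Moreover your assertion that ``the integration variable $z$ enters only through the complementary-subspace resolvent'' is false: $z$ sits in $A_\delta(\zeta,z)$ as well, and $A_\delta^{-1}$ has norm $O(r_\delta^{-1})$ on $\gamma_\delta$. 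The clean route is the paper's: apply Lemma~\ref{lem:1a}, then insert Lemma~\ref{lem:1b} into the \emph{differentiated} integrand, which yields directly
\[
\p_t\Pi_\delta^- = J_0^*\,\p_t\pi_\delta^-\,J_0 + \OO(\delta),\qquad
\nabla_\xi\Pi_\delta^- = J_0^*\,\p_\xi\pi_\delta^-\,J_0 + \OO(1).
\]

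Second, and more concretely, the bound $\p_t R_\delta=\OO(1)$ you claim is not strong enough to close the estimate. Consider the cross term
\[
\Tr\Bigl(J_0^*\pi_\delta^- J_0\cdot J_0^*(\p_\xi\pi_\delta^-)J_0\cdot \p_t R_\delta\Bigr).
\]
With your own sharp bound $\|\p_\xi\pi_\delta^-\|=O(\delta/r_\delta^2)$ and $\|\p_t R_\delta\|=O(1)$ this is $O(\delta/r_\delta^2)$, which at $\xi=\pi$ (where $r_\delta\asymp\delta$) blows up like $\delta^{-1}$. You need $\p_t R_\delta=\OO(\delta)$, and this is exactly what the paper extracts: since $\p_t P_\delta=\delta W_t'=O(\delta)$, the differentiated contour integral carries an explicit factor of $\delta$, and after one more application of Lemma~\ref{lem:1b} on each resolvent the remainder is $\OO(\delta)$. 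With that improvement and your sharp bounds on $\p_\xi\pi_\delta^-$, $\p_t\pi_\delta^-$, every cross term is $O(\delta^2/r_\delta^2)=O(1)$ and the argument closes.
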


\begin{proof} 1. We write $\zeta = (\xi,t)$. Let $Q_\delta(\zeta) : L^2_\xi \rightarrow L^2_\xi$ the projector on 
\begin{equation}
\bigoplus_{j=1}^{n-1} \ker_{L^2_\xi}\big(P_\delta(\zeta)-\lambda_{\delta,j}(\xi) \big).
\end{equation}
When $|\xi-\pi| \leq \epsi_0 < 1$, the eigenvalues $\lambda_{0,1}(\zeta), \dots, \lambda_{0,n-1}(\zeta)$ of $P_0(\zeta)$ are separated from the rest of the spectrum of $P_0(\zeta)$. Indeed, $\lambda_{0,n-1}(\zeta)$ can equal $\lambda_{0,n}(\zeta)$ only if $\xi = 0$, which is excluded because $|\xi-\pi| < 1$. Because of \cite[\S VIII.1.3 Theorem 1.7]{Ka}, the family $(\delta,\zeta) \mapsto Q_\delta(\zeta)$ is smooth on a neighborhood of $\{(0,\pi)\} \times \R$. In particular, under the conditions of \eqref{eq:6f}, 
\begin{equation}
\Tr\Big( Q_\delta(\zeta) \big[ \nabla_\xi Q_\delta(\zeta), \p_t Q_\delta(\zeta)\big] \Big) = O(1).
\end{equation}

For $\delta > 0$, let $\Pi_\delta^-(\xi)$ be the projector on $\ker_{L^2_\xi}\big(\Pp_{\delta,+}(\xi)-\lambda_{\delta,n}(\xi) \big)$. For $(\delta,\xi)$ near $(0,\xi_\star)$, $\lambda_{\delta,n}(\xi)$ is a simple eigenvalue of $P_\delta(\xi)$ -- see Lemma \ref{lem:1h}. Therefore for $(\delta,\xi)$ near $(0,\xi_\star)$ the projector $\Pi_\delta(\xi)$ splits orthogonally as
\begin{equation}\label{eq:1w}
\Pi_\delta(\xi) = \Pi_\delta^-(\xi) + Q_\delta(\xi).
\end{equation}

We recall that $B_\delta(\zeta)$ is gauge independent -- i.e. it does not depend on the choice of frame of $\EE_\delta$ used in \S\ref{sec:1z.1}. Pick a frame of $\EE_\delta$ associated to the orthogonal decomposition \eqref{eq:1w}. The associated Berry connection and curvature split accordingly to components for $\Pi_\delta^-(\zeta)$ and $Q_\delta(\zeta)$, see Lemma \ref{lem:1zzb}. Thanks to \eqref{eq:1w}, we deduce that
\begin{equations}\label{eq:1o}
B_\delta(\zeta) = \Tr\Big( Q_\delta(\zeta) \big[ \nabla_\xi Q_\delta(\zeta),  \p_t Q_\delta(\zeta)\big] \Big) + \Tr\Big( \Pi_\delta^-(\zeta) \big[ \nabla_\xi\Pi_\delta^-(\zeta), \p_t \Pi_\delta^-(\zeta)\big] \Big) 
\\
= \Tr\Big( \Pi_\delta^-(\zeta) \big[ \nabla_\xi \Pi_\delta^-(\zeta), \p_t \Pi_\delta^-(\zeta)\big] \Big) + O(1).
\end{equations}

2. Let $\gamma_\delta(\zeta) = \p\Bb\big(\mu_\delta^-(\zeta),r_\delta(\zeta) \big)$, oriented positively. Since $\lambda_{\delta,n}(\zeta)$ is the only eigenvalue of $P_\delta(\zeta)$ enclosed by $\gamma_\delta(\zeta)$ -- see Lemma \ref{lem:1g} -- we have
\begin{equations}\label{eq:1h}
\Pi_\delta^-(\zeta) = \dfrac{1}{2\pi i} \oint_{\gamma_\delta(\zeta)} \big(z-P_\delta(\zeta)\big)^{-1} dz.
\end{equations} 
Since $\gamma_\delta(\zeta)$ has length $O\big(r_\delta(\zeta) \big)$,
Lemma \ref{lem:1a} implies that
\begin{equations}
\Pi_\delta^-(\zeta) =  J_0(\xi)^* \cdot \dfrac{1}{2\pi i}\oint_{\gamma_\delta(\zeta)} \big(z-\Mm_\delta(\zeta)\big)^{-1} dz \cdot J_0(\xi) + \OO_{L^2_\xi}\big(r_\delta(\zeta) \big)
\\
= J_0(\xi)^* \cdot \pi_\delta^-(\zeta) \cdot J_0(\xi) + \OO_{L^2_\xi}\big(r_\delta(\zeta) \big).
\end{equations} 

3. Using \eqref{eq:1h},
\begin{equations}\label{eq:1m}
\dd{\Pi_\delta^-(\zeta)}{t} =  \dd{}{t} \left(\dfrac{1}{2\pi i}\oint_{\gamma_\delta(\zeta)} \big(z-P_\delta(\zeta)\big)^{-1} dz \right). 
\end{equations} 
Lemma \ref{lem:1a} is a direct application of the residue theorem that shows that the derivative with respect to $t$ can be interchanged with the integral -- even though $\gamma_\delta(\zeta)$ depends on $t$. Thanks to 
$\p_t P_\delta(\zeta) = \delta W_t'$, it implies
\begin{equations}
\dd{\Pi_\delta^-(\zeta)}{t} = \dfrac{1}{2\pi i} \oint_{\gamma_\delta(\zeta)} \big(z-P_\delta(\zeta)\big)^{-1} \cdot \delta W_t'  \cdot \big(z-P_\delta(\zeta)\big)^{-1} dz.
\end{equations} 
We now apply Lemma \ref{lem:1b} to get
\begin{equations}
\dd{\Pi_\delta^-(\zeta)}{t} = J_0(\xi)^* \cdot \dfrac{1}{2\pi i} \oint_{\gamma_\delta(\zeta)} \big(z-\Mm_\delta(\zeta)\big)^{-1} \cdot J_0(\xi) \delta W_t' J_0(\xi)^* \cdot  \big(z-\Mm_\delta(\zeta)\big)^{-1} dz \cdot J_0(\xi)
\\
+ \int_{\gamma_\delta(\zeta)} \OO_{L^2_\xi}\big(\delta r_\delta(\zeta)^{-1}\big) dz.
\end{equations} 
To control the remainder term, we used the estimate
\begin{equation}\label{eq:1n}
\big(z-P_\delta(\zeta)\big)^{-1} = \OO_{L^2_\xi}\big(r_\delta(\zeta)^{-1}\big),
\end{equation}
which follows from Lemma \ref{lem:1b} and the bound \eqref{eq:1j} on the resolvent of $\Mm_\delta(\zeta)$. The operator $J_0(\xi) W_t' J_0(\xi)^*$ is the $2 \times 2$ matrix
\begin{equation}
\matrice{0 & \delta \blr{\phi_+(\xi),W_t'\phi_-(\xi)} \\  \blr{\phi_-(\xi),W_t'\phi_+(\xi)} & 0 } = \matrice{0 & \ove{\var'(t)} \\ \var'(t) & 0} + \OO_{\C^2}(\xi-\pi).
\end{equation}
We recognize the matrix $\p_t \Mm_\delta(\zeta)$ modulo $O(\xi-\pi)$. Using again \eqref{eq:1j}, that $\gamma_\delta(\zeta)$ has length $O\big(r_\delta(\zeta)\big)$ and $\delta(\xi-\pi) = O\big(r_\delta(\zeta)^2\big)$, we deduce that under our assumptions, $\p_t \Pi_\delta(\zeta)$ equals
\begin{equations}
J_0(\xi)^* \cdot \dfrac{1}{2\pi i} \oint_{\gamma_\delta(\zeta)} \big(z-\Mm_\delta(\zeta)\big)^{-1} \cdot \dd{\Mm_\delta(\zeta)}{t} \cdot  \big(z-\Mm_\delta(\zeta)\big)^{-1} dz \cdot J_0(\xi) + \OO_{L^2_\xi}(\delta)
\\
= J_0(\xi)^* \cdot \dd{}{t} \left(\dfrac{1}{2\pi i} \oint_{\gamma_\delta(\zeta)} \big(z-\Mm_\delta(\zeta)\big)^{-1} dz \right) \cdot J_0(\xi) + \OO_{L^2_\xi}(\delta) 
\\
= J_0(\xi)^* \cdot \dd{\pi_\delta^-(\zeta)}{t} \cdot J_0(\xi) +  \OO_{L^2_\xi}(\delta).
\end{equations}
From \eqref{eq:1v}, $\p_t \pi_\delta^-(\zeta) = \OO_{\C^2}(1)$. This implies $\p_t \Pi_\delta(\zeta) = \OO_{L^2_\xi}(1)$.

4. Observe that $\nabla_\xi P_\delta(\zeta) = 2D_x$. 
Via arguments similar to the beginning of Step 3, 
\begin{equations}
\nabla_\xi \Pi_\delta^-(\zeta) =  \dfrac{1}{2\pi i} \oint_{\gamma_\delta(\zeta)} \big(z-P_\delta(\zeta)\big)^{-1} \cdot 2D_x  \cdot \big(z-P_\delta(\zeta)\big)^{-1} dz
\\
= J_0(\xi)^* \cdot \dfrac{1}{2\pi i} \oint_{\gamma_\delta(\zeta)} \big(z-\Mm_\delta(\zeta)\big)^{-1} \cdot J_0(\xi) 2D_x J_0(\xi)^* \cdot \big(z-\Mm_\delta(\zeta)\big)^{-1} dz \cdot J_0(\xi)
\\
+ \int_{\gamma_\delta(\xi)} \OO_{L^2_\xi}\big(r_\delta(\zeta)^{-1} \big) dz.
\end{equations}
To control the remainder term, we used $J_0(\xi) 2D_x J_0(\xi)^* = \OO_{\C^2}(1)$ and \eqref{eq:1n}. Because of Lemma \ref{lem:1c}, 
\begin{equation}
J_0(\xi) 2D_x J_0(\xi)^* = 2\matrice{ \lr{\phi_+(\xi),D_x\phi_+(\xi)} & \lr{\phi_+(\xi),D_x\phi_-(\xi)} \\ \lr{\phi_-(\xi),D_x\phi_+(\xi)} & \lr{\phi_-(\xi),D_x\phi_-(\xi)}} = \nu_\star \bst + \OO_{\C^2}(\xi-\pi).
\end{equation}
We recognize $\p_\xi \Mm_\delta(\zeta)$ modulo $O(\xi-\pi)$. Using \eqref{eq:1j} and that $\gamma_\delta(\zeta)$ has length $O\big(r_\delta(\zeta)\big)$, we argue as in the end of Step 3 and deduce that
 \begin{equations}
\nabla_\xi \Pi_\delta^-(\zeta) = J_0(\xi)^* \cdot \dd{\pi_\delta^-(\zeta)}{\xi} \cdot J_0(\xi) + O_{L^2_\xi}(1).
 \end{equations}
From \eqref{eq:1v}, $\p_\xi \pi_\delta^-(\zeta) = \OO_{\C^2}\big(\delta^{-1}\big)$. This implies that $\nabla_\xi \Pi_\delta^-(\zeta) = O\big(\delta^{-1}\big)$.

5. In Steps 2,3 and 4 we provided bounds on $\Pi_\delta^-(\zeta)$, $\p_t\Pi_\delta^-(\zeta)$ and $\nabla_\xi\Pi_\delta^-(\zeta)$ and we expanded them in terms of $\pi_\delta^-(\zeta)$, $\p_t\pi_\delta^-(\zeta)$ and $\p_\xi\pi_\delta^-(\zeta)$, respectively. These bounds lead to the expansion
\begin{equation}
\Pi_\delta^-(\zeta) \big[\nabla_\xi\Pi_\delta^-(\zeta), \p_t\Pi_\delta^-(\zeta)\big] = J_0(\xi)^* \cdot \pi_\delta^-(\zeta) \big[\nabla_\xi\pi_\delta^-(\zeta), \p_t\pi_\delta^-(\zeta)\big] \cdot J_0(\xi) + \OO_{L^2(\xi)}(1),
\end{equation}
where we used that $J_0(\xi) J_0(\xi)^*  = \Id_{\C^2}$. The operators involved have rank at most one, therefore we can take the trace and deduce that
\begin{equation}
\Tr\left(\Pi_\delta^-(\zeta) \big[\nabla_\xi\Pi_\delta^-(\zeta), \p_t\Pi_\delta^-(\zeta)\big] \right) = \Tr\left( \pi_\delta^-(\zeta) \big[\nabla_\xi\pi_\delta^-(\zeta), \p_t\pi_\delta^-(\zeta)\big] \right) + O(1).
\end{equation}
We used the cyclicity of the trace to remove $J_0(\xi)$ and $J_0(\xi)^*$ from the RHS. We conclude thanks to \eqref{eq:1o} that
\begin{equation}
B_\delta(\zeta) =  \Tr\left(\pi_\delta^-(\zeta) \big[\p_\xi\pi_\delta^-(\zeta), \p_t\pi_\delta^-(\zeta)\big] \right) + O(1).
\end{equation}
This completes the proof. \end{proof}

We now have all the ingredients to prove Theorem \ref{thm:2}. We recall that $c_1(\EE) = c_1(\EE_\delta)$ is the integral of the Berry curvature $B_\delta(\xi,t)$ over $[0,2\pi]^2$ -- see \eqref{eq:1zb} and \S\ref{sec:8.4}. Let $\epsi_0 > 0$ and $\delta_0 > 0$ be given by Lemma \ref{lem:1b}. For every $\delta \in (0,\delta_0)$ and $\epsi \in (0,\epsi_0)$,
\begin{equation}
c_1(\EE_\delta) = \dfrac{1}{2\pi i} \int_0^{2\pi}\int_{|\xi-\pi| \leq \epsi} B_\delta(\xi,t) \cdot d\xi dt + \dfrac{1}{2\pi i} \int_0^{2\pi}\int_{|\xi-\pi| \geq \epsi} B_\delta(\xi,t) \cdot d\xi dt.
\end{equation} 
Since the first integral is realized over an area $O(\epsi)$, Lemma \ref{lem:1d} yields
\begin{equation}\label{eq:1t}
\dfrac{1}{2\pi i} \int_0^{2\pi}\int_{|\xi-\pi| \leq \epsi} B_\delta(\xi,t) \cdot d\xi dt = \dfrac{1}{2\pi i} \int_0^{2\pi}\int_{|\xi-\pi| \leq \epsi} \Bb_\delta(\xi,t) \cdot d\xi dt + O(\epsi).
\end{equation}
We now \textit{fix} $\epsi$ such that the remainder term $O(\epsi)$ in \eqref{eq:1t} is smaller than $1/4$. Lemma \ref{lem:1f} shows that for $\delta$ sufficiently small,
\begin{equations}
\left|\dfrac{1}{2\pi i} \int_0^{2\pi}\int_{|\xi-\pi| \geq \epsi} B_\delta(\xi,t) d\xi dt\right| \leq \dfrac{1}{4}.
\end{equations}
We deduce that
\begin{equation}\label{eq:1p}
\left|c_1(\EE_\delta) - \dfrac{1}{2\pi i} \int_0^{2\pi} \int_{|\xi-\pi| \leq \epsi} \Bb_\delta(\xi,t) \cdot d\xi dt \right| \leq \dfrac{1}{2}.
\end{equation}

It remains to compute the integral appearing in \eqref{eq:1p}. Because of the link \eqref{eq:2u} between $\Mm_\delta(\xi,t)$ and $\MM(\xi,t)$, we have
\begin{equations}\label{eq:1q}
\dfrac{1}{2\pi i} \int_0^{2\pi}\int_{|\xi-\pi| \leq \epsi} \Bb_\delta(\xi,t) \cdot d\xi dt = \dfrac{1}{2\pi i} \int_0^{2\pi}\int_{|\xi-\pi| \leq \epsi} \dfrac{\nu_\star}{\delta} \cdot \BB\left(\dfrac{\nu_\star(\xi-\pi)}{\delta},t\right) \cdot d\xi dt
\end{equations}
We perform the substitution $\xi \mapsto \pi + \nu_\star^{-1} \delta \xi$. Since $\epsi$ is fixed, the domain of integration now approaches $\R \times [0,2\pi]$ as $\delta$ goes to zero. Using Lemma \ref{lem:2c} -- which computes the integral of $\BB(\xi,t)$ over $\R \times [0,2\pi]$ -- we deduce that \eqref{eq:1q} equals
\begin{equations}
\dfrac{1}{2\pi i} \int_0^{2\pi}\int_{|\xi| \leq \nu_F\epsi/\delta} \BB\left(\xi,t\right) \cdot \sgn(\nu_\star) d\xi dt = \dfrac{(-1)^\frac{n-1}{2}}{2\pi i} \int_0^{2\pi} \dfrac{\var'(t)}{\var(t)} dt + o(1).
\end{equations}
We take $\delta$ sufficiently small so that the term $o(1)$ is smaller than $1/4$ and \eqref{eq:1z} holds. This leads to
\begin{equation}
\left| c_1(\EE_\delta) - \dfrac{(-1)^\frac{n-1}{2}}{2\pi i} \int_0^{2\pi} \dfrac{\var'(t)}{\var(t)} dt \right| \leq \dfrac{3}{4}.
\end{equation}
Since $c_1(\EE_\delta)$ is an integer, the proof of Theorem \ref{thm:2} is complete:
\begin{equation}
c_1(\EE) = c_1(\EE_\delta) = \dfrac{(-1)^\frac{n-1}{2}}{2\pi i} \int_0^{2\pi} \dfrac{\var'(t)}{\var(t)} dt.
\end{equation}

\appendix

\section{}

\subsection{Dirac points}\label{sec:7.5} We deal here with operators $D_x^2+\VVV$, where $\VVV$ is a one-periodic potential satisfying an additional symmetry: $\VVV(x+1/2) = \VVV(x)$. In particular, the fundamental cell is $[0,1]$ and Brillouin zone is $[0,2\pi]$. Recall that 
\begin{equations}
L^2_{\xi,\ev} = \left\{ u \in L^2_\loc : u(x+1/2,\xi) = e^{i\xi} u(x)\right\}, 
\\
 L^2_{\xi,\od} = \left\{ u \in L^2_\loc : u(x+1/2,\xi) = -e^{i\xi} u(x)\right\}.
\end{equations}
The operator $D_x^2+\VVV$ leaves these spaces invariant; we denote by $\mu_{\ev,1}(\xi) \leq \dots \leq \mu_{\ev,j}(\xi)$ and $\mu_{\od,1}(\xi) \leq \dots \leq \mu_{\od,j}(\xi)$  its eigenvalues on $L^2_{\xi,\ev}$ and $L^2_{\xi,\od}$, respectively. 

\begin{lem}\label{lem:2m} The functions $\mu_{\ev,j}$ and $\mu_{\od,j}$ are monotonous on $[0,2\pi]$. Specifically,
\begin{equation}
j \text{ odd } \Rightarrow \ \systeme{ \mu_{\ev,j} \text{ increases on } [0,2\pi] \\ \mu_{\od,j} \text{ decreases on } [0,2\pi] }, \ \ j  \text{ even } \Rightarrow \ \systeme{ \mu_{\ev,j} \text{ decreases on } [0,2\pi] \\ \mu_{\od,j} \text{ increases on } [0,2\pi] }.
\end{equation}
In addition, for every $\xi \in (0,2\pi)$ and $j \geq 1$, the eigenvalues $\mu_{\ev,j}(\xi)$ and $\mu_{\od,j}(\xi)$ of $D_x^2 + \VVV$ on $L^2_{\xi,\ev}$, $L^2_{\xi,\od}$ are simple.
\end{lem}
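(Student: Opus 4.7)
The plan is to exploit the $1/2$-periodicity of $\VVV$ to rescale the problem so that eigenvalue questions on $L^2_{\xi,\ev}$ and $L^2_{\xi,\od}$ are reduced to Floquet--Bloch questions on $L^2_\eta$ for a standard one-periodic Schr\"odinger operator. Once this reduction is in place, the monotonicity \eqref{eq:3h} and the characterization \eqref{eq:4a} of degenerate fibers for one-periodic operators deliver the conclusion immediately.

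First I would perform the change of variable $y=2x$ and define $\tilde u(y) \de u(y/2)$. For $u \in L^2_{\xi,\ev}$, the condition $u(x+1/2)=e^{i\xi/2}u(x)$ becomes $\tilde u(y+1)=e^{i\xi/2}\tilde u(y)$, so $\tilde u \in L^2_{\xi/2}$ in the $y$-variable; for $u \in L^2_{\xi,\od}$, the same computation yields $\tilde u(y+1) = -e^{i\xi/2}\tilde u(y) = e^{i(\xi/2+\pi)}\tilde u(y)$, so $\tilde u \in L^2_{\xi/2+\pi}$. Since $\VVV$ is $1/2$-periodic, $\tilde \VVV(y) \de \VVV(y/2)$ is one-periodic, and a direct calculation gives $D_x^2 u(x) = 4(D_y^2 \tilde u)(2x)$, so that the eigenvalue equation $(D_x^2+\VVV)u=\mu u$ is equivalent to
\begin{equation}
\bigl(D_y^2 + \tilde\VVV/4\bigr)\tilde u = (\mu/4)\,\tilde u.
\end{equation}
Writing $\tilde E_j(\eta)$ for the $j$-th $L^2_\eta$-eigenvalue of the one-periodic operator $\tilde H \de D_y^2+\tilde\VVV/4$, the map $u \mapsto \tilde u$ is a bijection that intertwines the operators and preserves the ordering of eigenvalues, so one obtains the identifications
\begin{equation}
\mu_{\ev,j}(\xi) = 4\,\tilde E_j(\xi/2), \qquad \mu_{\od,j}(\xi) = 4\,\tilde E_j(\xi/2+\pi).
\end{equation}

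It then remains to read off the lemma from the standard facts \eqref{eq:3h} and \eqref{eq:4a} applied to $\tilde H$. The affine map $\xi \mapsto \xi/2$ sends $[0,2\pi]$ bijectively onto $[0,\pi]$, on which $\tilde E_j$ is increasing for $j$ odd and decreasing for $j$ even by \eqref{eq:3h}, giving the claimed monotonicity of $\mu_{\ev,j}$; the map $\xi \mapsto \xi/2+\pi$ sends $[0,2\pi]$ onto $[\pi,2\pi]$, on which the monotonicity of $\tilde E_j$ is reversed, giving the opposite monotonicity of $\mu_{\od,j}$. For simplicity, \eqref{eq:4a} asserts that the $L^2_\eta$-eigenvalues of $\tilde H$ can be degenerate only at $\eta\equiv 0,\pi \bmod 2\pi$; for $\xi\in(0,2\pi)$ both $\xi/2 \in (0,\pi)$ and $\xi/2+\pi \in (\pi,2\pi)$ avoid these exceptional values, so $\mu_{\ev,j}(\xi)$ and $\mu_{\od,j}(\xi)$ are simple. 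The only point in the argument requiring a little care is the preservation of the parity label $j$ under the rescaling, which is automatic because the intertwining $u\mapsto \tilde u$ is an order-preserving bijection between the eigenvalues; there is no substantive obstacle.
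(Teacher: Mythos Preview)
Your proof is correct and follows essentially the same strategy as the paper: both reduce the problem to standard Floquet--Bloch theory for a one-periodic Schr\"odinger operator by exploiting the $1/2$-periodicity of $\VVV$. The paper keeps the original variable and views $D_x^2+\VVV$ as acting on the lattice $\tfrac{1}{2}\Z$ with Brillouin zone $[0,4\pi]$, identifying $\tL^2_\xi$ with $L^2_{\xi,\ev}$ for $\xi\in[0,2\pi]$ and with $L^2_{\xi-2\pi,\od}$ for $\xi\in[2\pi,4\pi]$; you instead rescale $y=2x$ to obtain a genuinely one-periodic operator with Brillouin zone $[0,2\pi]$ and identify $\mu_{\ev,j}(\xi)=4\tilde E_j(\xi/2)$, $\mu_{\od,j}(\xi)=4\tilde E_j(\xi/2+\pi)$. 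These are two bookkeepings of the same reduction, and the appeal to \eqref{eq:3h} and \eqref{eq:4a} (equivalently, \cite[Theorem~XIII.89]{RS}) is identical.
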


\begin{proof} 1. We look at $D_x^2+\VVV$ as a $1/2$-periodic operator; the associated fundamental cell is $[0,1/2]$ and the corresponding Brillouin zone is $[0,4\pi]$. For $\xi \in [0,4\pi]$, let $T(\xi)$ formally equal to $D_x^2 + \VVV$, acting on the space of $\xi$-quasiperiodic functions
\begin{equation}
\tL^2_\xi = \left\{ u \in L^2_\loc : u(x+1/2,\xi) = e^{i\frac{1}{2}\xi} u(x)\right\}
\end{equation}
(the factor $\frac{1}{2}$ accounts for the period $1/2$ of the lattice $\Z/2$).
This space equals $L^2_{\xi,\ev}$ when $\xi \in [0,2\pi]$. Moreover,
\begin{equation}
e^{i\frac{1}{2}\xi} = -e^{i\frac{1}{2}(\xi-2\pi)}.
\end{equation}
Therefore when $\xi \in [2\pi,4\pi]$, $\tL^2_\xi = L^2_{\xi-2\pi,\od}$. We deduce that
\begin{equations}\label{eq:3n}
\xi \in [0,2\pi] \ \Rightarrow \ T(\xi) \text{ is equal to } D_x^2 + \VVV : L^2_{\xi,\ev} \rightarrow L^2_{\xi,\ev};
\\
\xi \in [2\pi,4\pi] \ \Rightarrow \ T(\xi) \text{ is equal to } D_x^2 + \VVV  : L^2_{\xi-2\pi,\od} \rightarrow L^2_{\xi-2\pi,\od}.
\end{equations}

2. Let $\tE_1(\xi) \leq \dots \leq \tE_j(\xi) \leq \dots$ be the $\tL^2_\xi$-eigenvalues of $T(\xi)$. Because of \eqref{eq:3n},
\begin{equation}\label{eq:6k}
\tE_j(\xi) = \systeme{\mu_{\ev,j}(\xi) & \text{ if } \xi \in [0,2\pi] \\ \mu_{\od,j}(\xi-2\pi) & \text{ if } \xi \in [2\pi,4\pi].}
\end{equation}
For $\xi \in (0,2\pi) \cup (2\pi,4\pi)$, the $\tE_\ell(\xi)$ are simple eigenvalues -- see \cite[Theorem XIII.89]{RS}. It follows that for $\xi \in (0,2\pi)$, $L^2_{\xi,\ev}$- and $L^2_{\xi,\od}$-eigenvalues of $D_x^2+\VVV$ are simple. That same theorem implies that if $\ell$ is odd (resp. even) then $\tE_\ell$ increases 
(resp. decreases) on $[0,2\pi]$ and $\tE_\ell$ decreases 
(resp. increases) on $[2\pi,4\pi]$. We deduce from \eqref{eq:6k} that $\xi \in [0,2\pi] \mapsto \mu_{\ev,\ell}(\xi)$ and $\xi \in [0,2\pi] \mapsto \mu_{\od,\ell}(\xi)$ are monotonous. More specifically,
\begin{equation}
\ell \text{ odd } \Rightarrow \ \systeme{ \mu_{\ev,\ell} \text{ increases on } [0,2\pi] \\ \mu_{\od,\ell} \text{ decreases on } [0,2\pi] }, \ \ \ \ell \text{ even } \Rightarrow \ \systeme{ \mu_{\ev,\ell} \text{ decreases on } [0,2\pi] \\ \mu_{\od,\ell} \text{ increases on } [0,2\pi] }.
\end{equation}
This completes the proof.\end{proof} 

\begin{center}
\begin{figure}
{\caption{The graph on the left represents the dispersion curves of $D_x^2+\VVV$ as a one-periodic operator. The $L^2_{\xi,\ev}$-curves are plotted in blue and the $L^2_{\xi,\od}$-curves are plotted in red. To obtain the dispersion curves of $D_x^2+\VVV$ as a $1/2$-periodic operator, it suffices to concatenate the $L^2_{\xi,\ev}$-curves with the $L^2_{\xi,\od}$-curves.}\label{fig:6}}
{\includegraphics{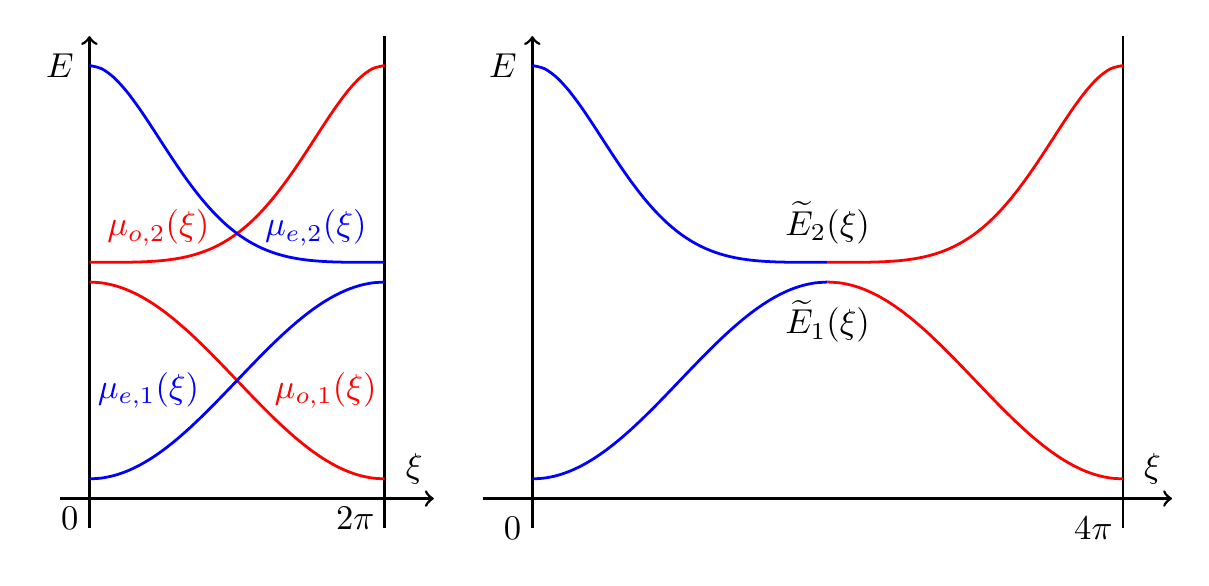}}
\end{figure}
\end{center}

For a pictorial representation of the proof of Lemma \ref{lem:2m}, see Figure \ref{fig:6}. Because of the monotonicity property \eqref{eq:3h}, we see that the $L^2_{\xi,\ev}$ and $L^2_{\xi,\od}$-eigenvalues  of $D_x^2+\VVV$ relate to the $L^2_\xi$-eigenvalues $E_1(\xi) \leq \dots \leq E_\ell(\xi) \leq \dots $ of $D_x^2+\VVV$ via 
\begin{equation}
E_{2j-1}(\xi) = \systeme{\mu_{\ev,j}(\xi) \text{ if } \xi \in [0,\pi] \\ \mu_{\od,j}(\xi) \text{ if } \xi \in [\pi,2\pi]}, \ \ \ \ E_{2j}(\xi) = \systeme{\mu_{\od,j}(\xi) \text{ if } \xi \in [0,\pi] \\ \mu_{\ev,j}(\xi) \text{ if } \xi \in [\pi,2\pi]}.
\end{equation}
See the left graph of Figure \ref{fig:6}. Moreover, $\mu_{\ev,N}(\xi) = \mu_{\od,N}(2\pi-\xi)$. This comes from $\VVV$ real-valued: for any $E$,
\begin{equation}\label{eq:3q}
u \in L^2_{\xi,\ev}, \ \ \big(D_x^2+\VVV-E\big) u =0 \ \Rightarrow \ \ou \in L^2_{2\pi-\xi,\od}, \ \ (D_x^2+\VVV-E) \ou =0.
\end{equation}

\begin{proof}[Proof of Lemma \ref{lem:1u}] 1. Write $n=2N-1$ and $E_\star = E_n(\pi)$. From \eqref{eq:3q}, we see that $\mu_{\ev,N}(\pi) = \mu_{\od,N}(\pi)$, that is, the $n$-th and $n+1$-th dispersion curves of $D_x^2+\VVV$ intersect at $\pi$. To check that this intersection always corresponds to a Dirac point, we must observe that $\p_\xi \mu_{\ev,N}(\pi) \neq 0$.

2. Assume that $\p_\xi \mu_{\ev,N}(\pi) = 0$; we look for a contradiction. The equation $\mu_{\ev,N}(\xi) = E_\star$ has a zero of algebraic multiplicity $2$. The function $\xi \in (0,2\pi) \mapsto \mu_{\ev,N}(\xi)$ is real analytic.  Indeed, it represents $L^2_{0,\ev}$-eigenvalues of $(D_x+\xi)^2 + \VVV$, which are simple. It follows that $\mu_{\ev,N}$ has an analytic extension to an open set $U \subset \C$ containing $(0,2\pi)$.

Let $E \in \R \setminus \{E_\star\}$, sufficiently close to $E_\star$. From Rouch\'e's theorem, the equation $\mu_{\ev,N}(\xi) = E$ has at least two solutions $\xi_1$ and $\xi_2$ in $U$. Since $\mu_{\ev,N}(\xi)$ is strictly monotonous and continuous on $(0,2\pi)$, the intermediate value theorem implies that (say) $\xi_1 \in (0,2\pi) \setminus \{\pi\}$; and $\xi_2 \notin \R$. Let $\xi_3 = 2\pi-\xi_1$. Note that $\xi_1, \ \xi_2$ and $\xi_3$ are pairwise distinct; and because of \eqref{eq:3q}, $\mu_{\od,N}(\xi_3) = E$. 

3. For $\xi \in (0,2\pi)$, the $\mu_{\ev,\ell}(\xi)$'s are the $L^2_{0,\ev}$-eigenvalues of $(D_x+\xi)^2 + \VVV$. Using the unique continuation principle, this remains true for $\xi \in U$. An analogous statement holds for $\mu_{\od,\ell}(\xi)$. From Step 2, we deduce that there exist $0 \not\equiv \psi_j \in L^2_{\xi_j}$, $j=1,2,3$ such that
\begin{equation}\label{eq:3m}
\big( D_x^2+\VVV-E \big) \psi_j = 0.
\end{equation}
If $(i,j,k)$ are pairwise distinct then so are $(\xi_i, \xi_j, \xi_k)$; and $L^2_{\xi_i} \cap (L^2_{\xi_j} \oplus L^2_{\xi_k})  = \{0\}$. We conclude that $\psi_1, \psi_2, \psi_3$ are three linearly independent
solutions of the second order ODE \eqref{eq:3m}. This is a contradiction. We deduce that $\p_\xi\mu_{\ev,N}(\pi) \neq 0$, which ends the proof.\end{proof}

A consequence of Lemma \ref{lem:2m} and of the proof of Lemma \ref{lem:1u}  is that under the notations of Lemma \ref{lem:1c},  $\nu_\star = \lr{D_x\phi_+^\star,\phi_+^\star} > 0$ when $n = 1 \mod 4$ and $\nu_\star < 0$ when $n = 3 \mod 4$. Equivalently,
\begin{equation}\label{eq:6q}
\sgn(\nu_\star) = (-1)^{\frac{n-1}{2}}.
\end{equation}

\subsection{Examples}\label{app:7} Here we prove the results of \S\ref{sec:8}. We will need the following lemma.

\begin{lem}\cite[Lemma 3.1]{DFW}\label{lem:2y} Let $T$ be a selfadjoint operator on a Hilbert space $\HH$ with discrete spectrum. Assume that there exist $\psi$ in the domain of $T$, $E \in \R$ and $\eta > 0$ such that 
\begin{equations}
|\psi|_\HH = 1\quad {\rm and}\  \ |(T-E)\psi|_\HH \leq \eta.
\end{equations}
Then, $T$ has a eigenvalue $\lambda$ with $|\lambda-E| \leq \eta$. 

Furthermore, if $T$ has no other eigenvalue in the interval $[E-C\eta,E+C\eta]$ for some $C \geq 1$ then $T$ has an eigenvector $\phi \in \ker_\HH(T-\lambda)$ with $|\psi-\phi|_\HH \leq C^{-1}$.
\end{lem}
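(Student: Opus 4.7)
The plan is to invoke the spectral theorem: since $T$ is selfadjoint with discrete spectrum, write $T = \sum_j \lambda_j \Pi_j$, where $\{\lambda_j\}$ are the eigenvalues and $\Pi_j$ are the orthogonal projectors onto $\ker_\HH(T-\lambda_j)$, with $\sum_j \Pi_j = \Id_\HH$. The two identities
\begin{equation}
|(T-E)\psi|_\HH^2 = \sum_j |\lambda_j - E|^2 \cdot |\Pi_j \psi|_\HH^2, \qquad 1 = |\psi|_\HH^2 = \sum_j |\Pi_j \psi|_\HH^2
\end{equation}
carry the whole argument.

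For the first assertion, I would argue by contradiction: if every eigenvalue $\lambda_j$ satisfied $|\lambda_j - E| > \eta$, then the first identity combined with Parseval would yield $|(T-E)\psi|_\HH^2 > \eta^2$, contradicting the hypothesis. Hence at least one eigenvalue $\lambda$ lies in $[E-\eta, E+\eta]$.

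For the second assertion, assume that $\lambda$ is the unique eigenvalue of $T$ inside $[E-C\eta, E+C\eta]$. Define $\phi \de \Pi_\lambda \psi \in \ker_\HH(T-\lambda)$ and $\psi^\perp \de \psi - \phi$. Orthogonality of the spectral subspaces of $T$ yields
\begin{equation}
|(T-E)\psi|_\HH^2 = |\lambda - E|^2 \cdot |\phi|_\HH^2 + |(T-E)\psi^\perp|_\HH^2.
\end{equation}
The vector $\psi^\perp$ is supported on eigenvalues $\mu \neq \lambda$, each of which satisfies $|\mu - E| > C\eta$ by assumption; applying the first identity above to $\psi^\perp$ gives $|(T-E)\psi^\perp|_\HH \geq C\eta \cdot |\psi^\perp|_\HH$. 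Combined with $|(T-E)\psi|_\HH \leq \eta$, this produces $|\psi - \phi|_\HH = |\psi^\perp|_\HH \leq C^{-1}$, as desired.

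This is a standard quasi-mode (Temple-type) lemma, and I do not expect any real obstacle: the only ingredient is the spectral theorem, and the bookkeeping reduces to a one-line Pythagorean estimate. The only cosmetic remark is that $\phi$ is not normalized, but the statement requires only $\phi \in \ker_\HH(T-\lambda)$, which the projection $\Pi_\lambda \psi$ satisfies automatically.
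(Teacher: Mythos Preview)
Your proof is correct. The paper does not actually supply a proof of this lemma: it is quoted from \cite{DFW} and the only comment offered is that it ``is a consequence of variational principles for selfadjoint eigenvalue problems.'' Your spectral-decomposition argument is exactly the standard implementation of that remark, and both parts go through as written.
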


The function $\psi$ is usually called a ($\eta$-accurate) quasimode.
Lemma \ref{lem:2y} is a consequence of variational principles for selfadjoint eigenvalue problems. It can dramatically refines results at barely any cost once one has (a) an \textit{accurate quasimode} and (b) an \textit{eigenvalue separation estimate}. See the application \cite[Corollary 1]{DFW} to get full expansions of defect states for the $\PP_{\delta,\star}$-spectral problem. 

\begin{proof}[Proof of Lemma \ref{lem:1f}] This lemma deals with small potentials. We verify the assumptions $\oA$ and $\oB$ using perturbative analysis.  The general idea is (a) use Lemma \ref{lem:1m} to estimate Bloch eigenpairs with a crude error bound; (b) use Lemma \ref{lem:2y} to refine these estimates. In the proof we suppose -- without loss of generalities -- that $|\VV|_\infty \leq 1$ and $|\WW|_\infty \leq 1$.

1. We first assume that $n = 1 \mod 4$. The operator $D_x^2$ has eigenvalues $\ell^2\pi^2$ on $L^2_\pi$, $\ell \in 2\N+1$, each of them with double multiplicity. The operator $D_x^2+V$ has eigenvalues
\begin{equation}
\lambda_{0,1}(\pi,t) = \lambda_{0,2}(\pi,t) < \lambda_{0,3}(\pi,t) = \lambda_{0,4}(\pi,t) < \dots
\end{equation}
see Lemma \ref{lem:1u} (technically speaking, these eigenvalues do not depend on $t$). Lemma \ref{lem:1m} applied to $\VVV = V$, $\WWW = 0$ implies that for $\epsi$ sufficiently small,
\begin{equation}\label{eq:2v}
\ell \in [1,n+1] \ \text{ odd } \ \ \Rightarrow \ \ 
\lambda_{0,\ell}(\pi,t) = \lambda_{0,\ell+1}(\pi,t) = \ell^2\pi^2+ O(\epsi).
\end{equation}
We set $E_\star = \lambda_{0,n}(\pi,t)$.

Let $f_\pm(x) = e^{\pm i\pi nx}$. The function $f_+$ belongs to $L^2_{\pi,\ev}$ (because $n = 1 \mod 4$) and
\begin{equation}
\big(D_x^2+V - E_\star\big) f_+ = \big(D_x^2-n^2\pi^2\big) f_+ + O_{L^2_{\pi,\ev}}(\epsi) = O_{L^2_{\pi,\ev}}(\epsi).
\end{equation}
Hence, $f_+$ is a $O(\epsi)$-precise quasimode for $D_x^2+V$ on $L^2_{\pi,\ev}$. In particular $D_x^2+V$ has a $L^2_{\pi,\ev}$-eigenvalue equal to $n^2\pi^2 + O(\epsi)$ -- see Lemma \ref{lem:2y}. Because of \eqref{eq:2v}, this eigenvalue must be $E_\star$. 

The eigenvalues of $D_x^2+V$ on $L^2_{\pi,\ev}$ are simple -- see Lemma \ref{lem:2m}. The equation \eqref{eq:2v} implies that $E_\star$ is the only $L^2_{\pi,\ev}$ eigenvalue of $D_x^2+V$ in $[n^2\pi^2-1,n^2\pi^2+1]$ for $\epsi$ sufficiently small. We deduce from Lemma \ref{lem:2y} applied to $\eta = O(\epsi)$, $C^{-1} = O(\epsi)$ that a corresponding $L^2_{\pi,\ev}$-eigenvector is $f_+ + O_{L^2_{\pi,\ev}}(\epsi)$. Since $V$ is real-valued, taking the complex conjugate produces a $L^2_{\pi,\od}$-eigenvector. Hence, a Dirac eigenbasis for the Dirac point $(\pi,E_\star)$ is 
\begin{equation}
\big( \phi_+^\star,\phi_-^\star \big) = (f_+,f_-) + O_{L^2_\pi}(\epsi).
\end{equation}

In particular, 
\begin{equations}\label{eq:9a}
\var(t) = \blr{\phi_-^\star,W_t\phi_+^\star} = \epsi \int_0^1 \WW\big(x+t/(2\pi) \big) e^{2i\pi nx} dx + O\left(\epsi^2\right)
\\
= \epsi \int_0^1 \WW(x) e^{2i\pi nx - int} dx + O\left(\epsi^2\right) = \epsi\ove{\hWW_n} \cdot e^{-int} + O\left(\epsi^2\right).
\end{equations}
Since we assumed that $\hWW_n \neq 0$, $\var$ does not vanish for small enough $\epsi$: $\oB$ holds. Furthermore, the degree of $\var$ is $-n = (-1)^{\frac{n+1}{2}} \cdot n$, because $n = 1 \mod 4$.

When $n =3 \mod 4$ then $f_+ \in L^2_{\pi,\od}$ and $f_- \in L^2_{\pi,\ev}$ instead. Therefore we only need to interchange $f_+$ and $f_-$. A Dirac eigenbasis is
\begin{equation}
\big( \phi_+^\star,\phi_-^\star \big) = (f_-,f_+) + O_{L^2_\pi}(\epsi).
\end{equation}
Since $\WW$ is real-valued, a calculation similar to \eqref{eq:9a} implies that $\var(t)$ equals 
\begin{equation}
\epsi \int_0^1 \WW(x) e^{-2i\pi nx + int} dx + O\left(\epsi^2\right) 
= \epsi e^{int} \int_0^1 \WW(x) e^{-2i\pi nx} dx + O\left(\epsi^2\right) = \epsi \hWW_n \cdot e^{int} + O\left(\epsi^2\right).
\end{equation}
For small enough $\epsi$, $\var(t)$ does not vanish: $\oB$ holds. Here, the degree of $\var$ is $n = (-1)^{\frac{n+1}{2}} \cdot n$, since $n = 3 \mod 4$.

2. We prove that for $s \in (0,1]$ and $\epsi$ sufficiently small, the $n$-th and $n+1$-th $L^2_\pi$-eigenvalues of $D_x^2 + V + s W_t$ satisfy
\begin{equation}
\lambda_{s,n}(\pi,t) < E_\star < \lambda_{s,n+1}(\pi,t).
\end{equation}
As in Step 1, Lemma \ref{lem:1m} estimates crudely the eigenvalues of $D_x^2 + V + sW_t$: for $\epsi$ sufficiently small, for each odd $\ell \in [1,n+1]$,
\begin{equation}\label{eq:2s}
\lambda_{s,\ell}(\pi,t) = \ell^2\pi^2+ O(\epsi) , \ \lambda_{s,\ell+1}(\pi) = \ell^2\pi^2+ O(\epsi).
\end{equation}
When $\ell = n$, we refine this estimate by constructing an accurate quasimode $g$. Let 
\begin{equations}\label{eq:9b}
(a,b) \neq (0,0) \in \C^2; \ \ \ \  u \in C^\infty(\R,\C) \cap L^2_\pi \ \text{ with }  \ u = O_{L^2_\pi}(\epsi); \\ \lambda \in \R \ \text{ with } \ \lambda = O(\epsi).
\end{equations}
Set $g = a \phi^\star_+ + b\phi^\star_- + su$ and observe that
\begin{equation}
\big( D_x^2 + V + s W_t - E_\star - s\lambda \big) g = \big( D_x^2+V-E_\star \big) su + s (W_t- \lambda) \big( a \phi^\star_+ + b\phi^\star_- \big) + O_{L^2_\pi}(s^2\epsi^2).
\end{equation}
For $g$ to be an accurate quasimode, the leading term of the RHS must vanish:
\begin{equation}\label{eq:3k}
\big(D_x^2+V-E_\star \big) u + (W_t- \lambda)(a \phi^\star_+ + b\phi^\star_-) =0.
\end{equation}
We prove that \eqref{eq:3k} admits a solution $(a,b,u,\lambda)$ satisfying \eqref{eq:9b}. We look at \eqref{eq:3k} as a inhomogeneous equation for $u$. For a solution to exist, it suffices to have the inhomogeneous term $(W_t- \lambda)(a \phi^\star_+ + b\phi^\star_-)$ orthogonal to $\ker_{L^2_\pi}(D_x^2+V-E_\star) = \C \phi_+^\star \oplus \C \phi_-^\star$. We obtain a system of equations:
\begin{equation}\label{eq:3j}
\systeme{\blr{\phi_+^\star,W_t\phi_+^\star}a + b\blr{\phi_+^\star,W_t\phi_-^\star} - \lambda a = 0 \ \\ \blr{\phi_-^\star,W_t\phi_+^\star}a + b\blr{\phi_-^\star,W_t\phi_-^\star} - \lambda a = 0.}
\end{equation}
Note that $\blr{\phi_+^\star,W_t \phi_+^\star} = \blr{\phi_-^\star,W_t \phi_-^\star} = 0$ -- see Lemma \ref{lem:2z}. The system \eqref{eq:3j} takes the form of a $2 \times 2$-eigenvalue problem:
\begin{equation}
\left(\matrice{0 & \ove{\var(t)} \ \\ \var(t) & 0} - \lambda \right) \matrice{a \\ b} = 0.
\end{equation}
It has non-trivial solutions precisely when $\lambda = \pm |\var(t)|$. Because $\var(t) = O(\epsi)$ (see Step 1), $\lambda = O(\epsi)$. Since $W_t = O(\epsi)$, \eqref{eq:3k} admits solution $u_\pm = O_{L^2_\pi}(\epsi)$ -- corresponding respectively to $\lambda = \pm |\var(t)|$. Hence, we constructed $g_\pm \in L^2_\pi$ such that
\begin{equation}
(D_x^2 + V + s W_t - E_\star \pm s|\var(t)|) g_\pm = O_{L^2_\pi}(s^2\epsi^2).
\end{equation}
Lemma \ref{lem:2y} (with $\eta = O(s^2 \epsi^2)$) implies that $D_x^2 + V + s W_t$ has two eigenvalues given by $E_\star \pm s|\var(t)| + O(s^2 \epsi^2)$. These eigenvalues must be $\lambda_{s,n}(\pi,t)$ and $\lambda_{s,n+1}(\pi,t)$ because of \eqref{eq:2s} and of $E_\star = n^2 \pi^2 + O(\epsi)$. We deduce that for $s \in (0,1]$ and $\epsi$ small enough,
\begin{equation}
\lambda_{s,n}(\pi,t) < E_\star < \lambda_{s,n+1}(\pi,t).
\end{equation}
In particular, $\oA$ holds with $E(s,t) \equiv E_\star$.
\end{proof}

\begin{proof}[Proof of Lemma \ref{lem:2n}] 1. We first assume that $n = 1 \mod 4$. The strategy is similar to that of Lemma \ref{lem:1f}. By Lemma \ref{lem:1m}, for $\epsi$ small enough, for each odd $\ell \in [1,n+1]$,
\begin{equation}\label{eq:2z}
\lambda_{0,\ell}(\pi,t) = \lambda_{0,\ell+1}(\pi,t) = \ell^2\pi^2+ O\left(\epsi^2\right).
\end{equation}
Set $E_\star = \lambda_{0,n}(\pi,t) = \lambda_{0,n}(\pi,0) = n^2\pi^2+O(\epsi^2)$. The operator $D_x^2+V$ has a Dirac point at $(\pi,E_\star)$. We  use Lemma \ref{lem:2y} to construct an approximation of a Dirac eigenbasis $(\phi_+^\star,\phi_-^\star)$. Set 
\begin{equation}
f_+(x) \de e^{i\pi nx} \left( 1 - \dfrac{\epsi^2}{8\pi^2(m-n)} \left( \dfrac{e^{2i\pi (m-n) x}}{m} + \dfrac{ e^{-2i\pi (m-n) x}}{m-2n} \right)\right), \ \ f_-(x) \de \ove{f_+(x)}.
\end{equation}
Since $n = 1 \mod 4$, $f_+ \in L^2_{\pi,\ev}$, $f_- \in L^2_{\pi,\od}$. Moreover, a computation shows that
\begin{equation}
\big( D_x^2+ V-n^2\pi^2 \big) f_\pm = O_{L^2_\pi}(\epsi^3), \ \ |f_\pm|_{L^2_\pi}  = 1 + O\left(\epsi^3\right). 
\end{equation}
Therefore $D_x^2 + V$ admits $L^2_{\pi,\ev}$ and $L^2_{\pi,\od}$-eigenvectors $O\left(\epsi^3\right)$-close to $f_\pm$, with energy $O(\epsi^3)$ close to $n^2\pi^2$, see Lemma \ref{lem:2y} and the first step in the proof of Lemma \ref{lem:1f}. Because of \eqref{eq:2z}, we deduce that a Dirac eigenbasis is
\begin{equation}
(\phi_+^\star,\phi_-^\star) = (f_+,f_-) + O_{L^2_\pi}\left(\epsi^3\right).
\end{equation}

We can now estimate $\var(t)$: observe that
\begin{equations}
\ove{\phi^\star_-(x)} W_t(x) \phi^\star_+(x) = W\big(x+t/(2\pi)\big) f_+(x)^2 + O\left(\epsi^7\right)
\\
= \epsi^4 e^{2i\pi n x} \left( 1 - \dfrac{\epsi^2}{8\pi^2(m-n)} \left( \dfrac{e^{2i\pi (m-n) x}}{m} + \dfrac{ e^{-2i\pi (m-n) x}}{m-2n} \right)\right)^2 \cos(2\pi m x+t)  + O\left(\epsi^7\right)
\\
 = \epsi^4 \left( e^{2i\pi nx} - \dfrac{\epsi^2}{4\pi^2(m-n)} \left( \dfrac{e^{2i\pi m x}}{m} + \dfrac{ e^{-2i\pi (m-2n) x}}{m-2n} \right)\right) \cos(2\pi m x+t) + O\left(\epsi^7\right).
\end{equations}
Averaging this over $[0,1]$, we get:
\begin{equation}\label{eq:9d}
\var(t) = -\dfrac{\epsi^6}{8\pi^2(m-n)m} e^{-imt}+ O\left(\epsi^7\right).
\end{equation}
For $\epsi$  sufficiently small, $\var$ does not vanish and its degree is $m = (-1)^{\frac{n+1}{2}} \cdot m$, because $n = 1 \mod 4$. If now $n=3 \mod 4$, then we need to change $f_+$ and $f_-$ (as in the proof of Lemma \ref{lem:1f}). We end up with
\begin{equation}\label{eq:9e}
\var(t) = -\dfrac{\epsi^6}{8\pi^2(m-n)m} e^{imt}+ O\left(\epsi^7\right).
\end{equation}
Therefore, the winding number \eqref{eq:1z} equals $m =(-1)^{\frac{n+1}{2}} \cdot m$. In particular, $\oB$ holds for sufficiently small $\epsi$.

2. The exact same argument as in Step 2 in the proof of Lemma \ref{lem:1f} shows that
\begin{equation}
\lambda_{s,n}(\pi,t) = E_\star - s|\var(t)|+O\left(s^2\epsi^8\right), \ \ \lambda_{s,n+1}(\pi,t) = E_\star + s|\var(t)|+O\left(s^2\epsi^8\right).
\end{equation}
From \eqref{eq:9d} and \eqref{eq:9e}, we deduce that
\begin{equation}
|\lambda_{s,n}(\pi,t) - \lambda_{s,n+1}(\pi,t)| \geq \dfrac{\epsi^6}{2\pi^2} + O\left(s^2\epsi^8\right).
\end{equation}
In particular, $\oA$ holds for sufficiently small $\epsi$.
\end{proof}

\subsection{Eigenvalue estimates}\label{sec:7.2} Recall that the operator $P_{\delta}(\xi,t)$ is $D_x^2 + V + \delta W_t$ acting on $L^2_\xi$ and that $\lambda_{\delta,\ell}(\xi,t)$ is the $\ell$-th eigenvalue of $P_{\delta}(\xi,t)$.

\begin{lem}\label{lem:1h} There exist $\delta_0 > 0$ and $C > 0$ such that
\begin{equations}
\delta \in (0,\delta_0), \ \ |\xi-\pi| \leq \delta_0, \ \ z \in \BBb\big(\mu_\delta^-(\zeta),r_\delta(\zeta)\big), \ \ t \in \R \\
 \Rightarrow \ P_{0}(\xi,t) : \VVV(\xi)^\perp \rightarrow \VVV(\xi)^\perp \text{ is invertible and } \left((P_{0}(\zeta)-z)|_{\VVV(\xi)^\perp}\right)^{-1} = O_{\VVV(\xi)^\perp}(1). 
\end{equations}
\end{lem}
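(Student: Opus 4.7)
\textbf{Plan for the proof of Lemma \ref{lem:1h}.} The plan is to reduce the claim to an elementary gap estimate for the spectrum of $P_0(\xi,t) = D_x^2 + V$, which is independent of $t$ and self-adjoint on $L^2_\xi$. Since $\VVV(\xi) = \C\phi_+(\xi) \oplus \C\phi_-(\xi)$ is, by construction, a direct sum of eigenspaces of $P_0(\xi)$, the subspace $\VVV(\xi)^\perp$ is invariant under $P_0(\xi)$ and the $L^2_\xi$-spectrum of the restriction is $\{\lambda_{0,\ell}(\xi) : \ell \notin \{n,n+1\}\}$. Thus it suffices to show that $z$ is uniformly separated from this spectrum in the given range of parameters.

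First I would use that $(\pi,E_\star)$ is the $n$-th Dirac point: at $\xi = \pi$ we have $\lambda_{0,n}(\pi) = \lambda_{0,n+1}(\pi) = E_\star$, and by the monotonicity/simplicity properties of Floquet--Bloch theory recalled in \S\ref{sec:3.0} (cf.\ Lemma \ref{lem:2m}), these are the \emph{only} eigenvalues of $P_0(\pi)$ equal to $E_\star$. Hence there exists $c > 0$ with
\begin{equation}
\lambda_{0,n-1}(\pi) \leq E_\star - 4c, \qquad \lambda_{0,n+2}(\pi) \geq E_\star + 4c.
\end{equation}
Continuity of the dispersion curves (e.g.\ Lemma \ref{lem:1m}) then yields $\delta_1 > 0$ such that for $|\xi-\pi| \leq \delta_1$ and every $\ell \notin \{n,n+1\}$, $|\lambda_{0,\ell}(\xi) - E_\star| \geq 2c$.

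Next I would bound the distance from $z$ to $E_\star$. Because $\mu_\delta^-(\zeta) = E_\star - r_\delta(\zeta)$ and $z \in \Bb(\mu_\delta^-(\zeta), r_\delta(\zeta))$,
\begin{equation}
|z - E_\star| \leq |z - \mu_\delta^-(\zeta)| + r_\delta(\zeta) \leq 2 r_\delta(\zeta) = 2\sqrt{\nu_\star^2(\xi-\pi)^2 + \delta^2|\var(t)|^2}.
\end{equation}
Since $\var$ is continuous and $2\pi$-periodic, $|\var(t)|$ is uniformly bounded in $t$, so by choosing $\delta_0 \in (0,\delta_1]$ small enough we ensure $2 r_\delta(\zeta) \leq c$ throughout the parameter range. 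Combining with the previous paragraph,
\begin{equation}
\dist\bigl(z,\Sigma_{\VVV(\xi)^\perp}(P_0(\xi,t))\bigr) \geq c.
\end{equation}
The self-adjoint functional calculus on $\VVV(\xi)^\perp$ then gives
$\|(P_0(\xi,t)-z)|_{\VVV(\xi)^\perp}^{-1}\|_{\VVV(\xi)^\perp} \leq 1/c$,
which is exactly the claim with $C = 1/c$.

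The only substantive step is the continuity input controlling the $(n-1)$-th and $(n+2)$-th eigenvalues on a neighborhood of $\xi=\pi$; this is standard (Lemma \ref{lem:1m}) so I expect no real obstacle. Everything else is a direct consequence of self-adjointness and the definitions of $\mu_\delta^-(\zeta)$ and $r_\delta(\zeta)$.
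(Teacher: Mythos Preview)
Your proposal is correct and follows essentially the same argument as the paper: identify the spectrum of $P_0(\zeta)|_{\VVV(\xi)^\perp}$ as $\{\lambda_{0,\ell}(\xi):\ell\neq n,n+1\}$, exhibit a uniform gap between this set and $E_\star$, confine $z$ to a small ball about $E_\star$ via $|z-E_\star|\le 2r_\delta(\zeta)$, and apply the spectral theorem. The only cosmetic difference is that the paper obtains the gap on the fixed interval $[\pi/2,3\pi/2]$ directly from the monotonicity properties~\eqref{eq:3h} rather than by continuity at $\xi=\pi$; note also that Lemma~\ref{lem:1m} concerns perturbations of the \emph{potential}, not continuity in $\xi$, so your citation there is slightly off (though $\xi$-continuity of dispersion curves is of course standard).
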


\begin{proof} Fix $\zeta = (\xi,t)$. The spectrum of $P_{0}(\zeta) : \VVV(\xi)^\perp \rightarrow \VVV(\xi)^\perp$ is precisely 
\begin{equation}
\Sigma_{L^2_\xi}\big(P_{0}(\zeta)\big) \setminus \big\{\lambda_{0,n}(\zeta), \lambda_{0,n+1}(\zeta)\big\}
\end{equation}
Because of the monotonicity properties of $\lambda_{0,n}(\zeta)$ and $\lambda_{0,n+1}(\zeta)$  -- see \eqref{eq:3h} -- there exist $\epsi_0, \ \epsi_1 > 0$ such that for $\xi \in [\pi/2,3\pi/2]$,
\begin{equation}\label{eq:8t}
\lambda_{0,n-1} (\zeta) < E_\star-2\epsi_0 < \lambda_{0,n}(\zeta) \leq E_\star \leq \lambda_{0,n+1}(\zeta) < E_\star+2\epsi_1 < \lambda_{0,n+2}(\zeta).
\end{equation}
Let $\epsi = \min(\epsi_0,\epsi_1)$. The equation \eqref{eq:8t} implies that
 the distance between $\BBb(E_\star,\epsi)$ and the $\VVV(\xi)^\perp$-spectrum of $P_{0}(\zeta)$ is at least $\epsi$. It follows that
\begin{equation}
z \in \BBb(E_\star,\epsi) \ \Rightarrow \ \left\| \left((P_{0}(\zeta)-z)|_{\VVV(\xi)^\perp}\right)^{-1} \right\|_{\VVV(\xi)^\perp} \leq \epsi^{-1}.
\end{equation}
To conclude, we simply observe that $\BBb(\mu_\delta^-(\zeta),r_\delta(\zeta)) \subset \BBb(E_\star,\epsi)$ when $\delta$ and $|\xi-\pi|$ are small enough because
\begin{equation}
|E_\star - \mu_\delta^-(\zeta)| \leq \nu_F |\xi-\pi| + \delta |\var(t)|.
\end{equation}
This completes the proof. \end{proof}

\begin{lem}\label{lem:1g} There exists $\delta_0 > 0$ such that 
\begin{equations}
\delta \in (0,\delta_0), \ |\xi-\pi| \leq \delta_0, \ t \in \R 
\\
 \Rightarrow \ \lambda_{\delta,n}(\zeta) \in \BBb\big(\mu_\delta^-(\zeta), r_\delta(\zeta)\big), \ \lambda_{\delta,n+1}(\zeta) \in \BBb\big(\mu_\delta^+(\zeta), r_\delta(\zeta)\big).
\end{equations}
\end{lem}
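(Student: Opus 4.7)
My plan is to apply a Riesz projector argument, using Lemma \ref{lem:1b} as the main analytic input. Write $\zeta = (\xi,t)$ and consider the contours $\gamma^\pm_\delta(\zeta) = \p\Bb(\mu_\delta^\pm(\zeta), r_\delta(\zeta))$ together with the spectral projectors
\begin{equation*}
\Pi_\delta^\pm(\zeta) \de \frac{1}{2\pi i} \oint_{\gamma^\pm_\delta(\zeta)} (z - P_\delta(\zeta))^{-1}\, dz, \qquad \pi_\delta^\pm(\zeta) \de \frac{1}{2\pi i} \oint_{\gamma^\pm_\delta(\zeta)} (z - \Mm_\delta(\zeta))^{-1}\, dz.
\end{equation*}
Lemma \ref{lem:1b} ensures that for $\delta$ and $|\xi-\pi|$ sufficiently small the resolvents of $P_\delta(\zeta)$ exist and are bounded on these contours, so $\Pi_\delta^\pm(\zeta)$ is well defined and is the orthogonal projector onto the sum of $L^2_\xi$-eigenspaces of $P_\delta(\zeta)$ for eigenvalues enclosed by $\gamma_\delta^\pm$. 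The goal is to show each $\Pi_\delta^\pm(\zeta)$ has rank one and then to match the enclosed eigenvalue with $\lambda_{\delta,n}(\zeta)$ or $\lambda_{\delta,n+1}(\zeta)$ respectively.

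For the rank count, I would compare $\Pi_\delta^\pm(\zeta)$ to $J_0(\xi)^* \pi_\delta^\pm(\zeta) J_0(\xi)$. Since $\Mm_\delta(\zeta)$ has the simple eigenvalue $\mu_\delta^\pm$ enclosed by $\gamma_\delta^\pm$, $\pi_\delta^\pm$ is a rank-one orthogonal projector on $\C^2$; and since $\phi_\pm(\xi)$ is $L^2_\xi$-orthonormal one has $J_0(\xi) J_0(\xi)^* = \Id_{\C^2}$, so $J_0(\xi)^* \pi_\delta^\pm(\zeta) J_0(\xi)$ is a rank-one orthogonal projector on $L^2_\xi$. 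Integrating the $\OO_{L^2_\xi}(1)$ remainder of Lemma \ref{lem:1b} along a contour of length $2\pi r_\delta(\zeta)$ gives
\begin{equation*}
\Pi_\delta^\pm(\zeta) = J_0(\xi)^* \pi_\delta^\pm(\zeta) J_0(\xi) + \OO_{L^2_\xi}(r_\delta(\zeta)).
\end{equation*}
Since $r_\delta(\zeta) \leq |\nu_\star| \cdot |\xi-\pi| + \delta \cdot \sup_t |\var(t)|$ tends to $0$ as $\delta, |\xi-\pi| \to 0$, the operator-norm distance between these two orthogonal projectors is smaller than $1$ for $\delta_0$ small. The standard fact that orthogonal projectors at operator-norm distance less than $1$ have the same rank then forces $\rk \Pi_\delta^\pm(\zeta) = 1$, so $P_\delta(\zeta)$ has exactly one eigenvalue in each of $\Bb(\mu_\delta^\pm(\zeta), r_\delta(\zeta))$.

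To identify the enclosed eigenvalues, I invoke Lemma \ref{lem:1h}: there exists $\epsi > 0$ with $|\lambda_{0,j}(\zeta) - E_\star| \geq 2\epsi$ for every $j \notin \{n, n+1\}$ and every $|\xi-\pi| \leq \delta_0$. Weyl's inequality applied to $\|\delta W_t\|_{L^\infty} = O(\delta)$ preserves this separation for $\lambda_{\delta,j}(\zeta)$ after shrinking $\delta_0$. For $r_\delta(\zeta) < \epsi$ the balls $\Bb(\mu_\delta^\pm, r_\delta) \subset \Bb(E_\star, 2r_\delta)$ thus contain no $\lambda_{\delta,j}$ with $j \notin \{n, n+1\}$. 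Finally, the two closed balls $\overline{\Bb(\mu_\delta^\pm(\zeta), r_\delta(\zeta))}$ are tangent at the real point $E_\star$, with open interiors intersecting $\R$ in $(E_\star-2r_\delta,E_\star)$ and $(E_\star,E_\star+2r_\delta)$ respectively; combined with the ordering $\lambda_{\delta,n}(\zeta) \leq \lambda_{\delta,n+1}(\zeta)$, this forces the eigenvalue in $\Bb(\mu_\delta^-, r_\delta)$ to be $\lambda_{\delta,n}(\zeta)$ and the one in $\Bb(\mu_\delta^+, r_\delta)$ to be $\lambda_{\delta,n+1}(\zeta)$.

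The main technical point is the rank comparison: the $\OO_{L^2_\xi}(1)$ resolvent bound of Lemma \ref{lem:1b}, combined with the contour length $2\pi r_\delta$, gives exactly enough smallness in the projector difference to run the rank count. Once this is in place, everything else is routine perturbation theory and an ordering argument for real eigenvalues lying in two tangent disks.
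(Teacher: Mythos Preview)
Your proof is correct and follows essentially the same route as the paper: both use Lemma \ref{lem:1b} to control the Riesz projector along $\partial\Bb(\mu_\delta^\pm,r_\delta)$ and deduce that exactly one eigenvalue sits in each ball. The only cosmetic differences are that the paper counts rank via a trace argument (bounding the remainder's rank and then using integrality of the trace), whereas you compare orthogonal projectors in norm; and the paper cites Lemma \ref{lem:1m} for the identification step, whereas you spell out the separation from Lemma \ref{lem:1h}'s proof plus the tangency/ordering argument---your version is in fact slightly more explicit on that point.
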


\begin{proof} 1. We show that $P_{\delta}(\zeta)$ has an eigenvalue in each of the balls $\BBb(\mu_\delta^-(\zeta),r_\delta(\zeta))$ and $\BBb(\mu_\delta^+(\zeta),r_\delta(\zeta))$. Because of Lemma \ref{lem:1m}, these must be the $n$-th and $n+1$-th eigenvalues of $P_{\delta}(\zeta)$. This would prove the lemma.

2. According to Lemma \ref{lem:1b}, the operator $P_{\delta}(\zeta)-z$ is invertible for $z \in \BBb(\mu_\delta^-(\zeta),r_\delta(\zeta))$. It follows that
\begin{equation}
\# \Sigma_{L^2_\xi}\big(P_{\delta}(\zeta)\big) \cap \BBb\big(\mu_\delta^-(\zeta),r_\delta(\zeta)\big) = \trace \left[\dfrac{1}{2\pi i} \oint_{\gamma_\delta^-(\zeta)} \big( z-P_{\delta}(\zeta) \big)^{-1} dz \right],
\end{equation}
where $\gamma_\delta^-(\zeta) = \p \BBb(\mu_\delta^-(\zeta),r_\delta(\zeta))$. Again from Lemma \ref{lem:1b},
\begin{equation}
\dfrac{1}{2\pi i} \oint_{\gamma_\delta^-(\zeta)} \big( z - P_{\delta}(\zeta) \big)^{-1} dz = \Pi_0(\xi)^* \cdot \dfrac{1}{2\pi i}  \oint_{\gamma_\delta^-(\zeta)} \big( z - \Mm_\delta(\zeta) \big)^{-1} dz \cdot \Pi_0(\xi) + \OO_{L^2_\xi}\big(r_\delta(\zeta)\big).
\end{equation}
Because of Lemma \ref{lem:1m}, the LHS is of rank at most $2$. The leading term in the RHS is of rank $1$. Therefore the remainder term is at most of rank $3$ and we can take the trace. It yields
\begin{equation}
\Tr\left[\dfrac{1}{2\pi i} \oint_{\gamma_\delta^-(\zeta)} \big( z - P_{\delta}(\zeta) \big)^{-1} dz \right] = 1 + O\big(r_\delta(\zeta)\big).
\end{equation}
We deduce that $P_{\delta}(\zeta)$ has exactly one eigenvalue in $\BBb(\mu_\delta^-(\zeta),r_\delta(\zeta))$. The same holds for $\BBb(\mu_\delta^+(\zeta),r_\delta(\zeta))$, and the conclusion follows.
\end{proof}

\subsection{A contour integration lemma}

Let $U$ be an open subset of $\R^N$, $\UU$ an open subset of $\C \times U$ and $F : \UU \rightarrow \C$. We say that $F$  is a smooth family of meromorphic functions  for $\zeta \in U$ if there exist $S, \ T \in C^\infty(\C \times U,\C)$ such that $\p_\oz S = \p_\oz T = 0$ and
\begin{equation}
(z,\zeta) \in \UU \ \Rightarrow \ F(z,\zeta) = \dfrac{S(z,\zeta)}{T(z,\zeta)}.
\end{equation}
In \S\ref{sec:5}, we use thoroughly the following lemma. The proof is an immediate consequence of the residue theorem and is omitted.

\begin{lem}\label{lem:1a} Let $F$ be as above. Let $(\zeta,\te) \in U \times \Ss^1 \mapsto \gamma(\zeta,\te) \in \C$ be a piecewise $C^1$ contour such that $(\gamma(\zeta,\te),\zeta)$ always belong to $\UU$. 
Then
\begin{equation}
I : \zeta \in U \mapsto \dfrac{1}{2\pi i} \oint_{\gamma(\zeta)} F(z,\zeta) dz
\end{equation}
is smooth and for $j=1, \dots, N$,
\begin{equation}
\dd{I(\zeta)}{\zeta_j} = \dfrac{1}{2\pi i} \oint_{\gamma(\zeta)} \dd{F(z,\zeta)}{\zeta_j} dz.
\end{equation}
\end{lem}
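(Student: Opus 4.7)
The plan is to parametrize the contour integral and reduce the lemma to differentiation under the integral sign. Writing
\begin{equation*}
I(\zeta) = \dfrac{1}{2\pi i} \int_0^{2\pi} F\big(\gamma(\zeta,\theta),\zeta\big) \cdot \p_\theta \gamma(\zeta,\theta) \, d\theta,
\end{equation*}
the integrand is smooth in $\zeta$ on each $\theta$-interval where $\gamma$ is $C^1$: since $F = S/T$ with $T$ smooth and non-vanishing on $\UU$, and since the hypothesis $(\gamma(\zeta,\theta),\zeta) \in \UU$ places the contour away from the poles of $F(\cdot,\zeta)$, the composition $F(\gamma(\zeta,\theta),\zeta)$ is smooth jointly in $(\zeta,\theta)$ on each smooth piece. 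Local uniform bounds in $\theta$ (using compactness of $\Ss^1$) then yield smoothness of $I$ and justify differentiating under the integral.

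\textbf{Derivative formula.} Applying $\p_{\zeta_j}$ and the chain rule to the integrand decomposes $\p_{\zeta_j} I(\zeta)$ as
\begin{equation*}
\dfrac{1}{2\pi i} \int_0^{2\pi} (\p_{\zeta_j} F)(\gamma,\zeta) \, \p_\theta\gamma \, d\theta \; + \; \dfrac{1}{2\pi i} \int_0^{2\pi} \Big[ (\p_z F)(\gamma,\zeta)\, \p_{\zeta_j}\gamma \cdot \p_\theta\gamma + F(\gamma,\zeta)\, \p_\theta \p_{\zeta_j}\gamma \Big] d\theta.
\end{equation*}
The first integral is exactly $\frac{1}{2\pi i} \oint_{\gamma(\zeta)} \p_{\zeta_j} F(z,\zeta) \, dz$, which is the target. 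The bracketed expression in the second integral is, again by the chain rule, the total derivative
\begin{equation*}
\p_\theta \Big[ F\big(\gamma(\zeta,\theta),\zeta\big) \cdot \p_{\zeta_j}\gamma(\zeta,\theta) \Big],
\end{equation*}
whose integral over the closed loop $\Ss^1$ vanishes. This yields the claimed identity.

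\textbf{Main obstacle.} The only technical point is the handling of the finitely many corners of the piecewise $C^1$ contour, where $\p_\theta\gamma$ and $\p_\theta\p_{\zeta_j}\gamma$ may jump. I would split $[0,2\pi]$ into smooth pieces $[\theta_k,\theta_{k+1}]$, apply the fundamental theorem of calculus on each, and observe that the resulting boundary contributions telescope to zero. They do cancel because $\theta \mapsto \gamma(\zeta,\theta)$ is continuous on $\Ss^1$ by hypothesis, so differentiating the identity $\gamma(\zeta,\theta_k^-) = \gamma(\zeta,\theta_k^+)$ with respect to $\zeta_j$ forces $\p_{\zeta_j}\gamma$ to have matching one-sided limits at each breakpoint; combined with continuity of $F\circ\gamma$, the endpoint values at $\theta_k^-$ and $\theta_k^+$ coincide and cancel in the telescoping sum. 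In every application of the lemma in \S\ref{sec:5}, $\gamma$ is a genuine smooth circle, so this subtlety does not actually arise in practice.
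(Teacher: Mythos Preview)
Your argument is correct. The paper does not actually give a proof: it simply says the lemma ``is an immediate consequence of the residue theorem and is omitted.'' So any sound argument suffices, and yours is one.

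That said, the route the paper has in mind is slightly different and a bit cleaner. Because $F(\cdot,\zeta)$ is meromorphic with poles contained in the zero set of $T(\cdot,\zeta)$, and the contour $\gamma(\zeta)$ stays inside $\UU$ where $T \neq 0$, one may use Cauchy's theorem (equivalently, the residue theorem) to \emph{freeze} the contour: for $\zeta$ in a small neighborhood of any $\zeta_0$, the integral over $\gamma(\zeta)$ equals the integral over the fixed contour $\gamma(\zeta_0)$. Once the contour no longer depends on $\zeta$, differentiation under the integral sign is immediate and gives both smoothness of $I$ and the derivative formula in one stroke, with no chain-rule bookkeeping and no total-derivative cancellation to check.

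The practical advantage of the contour-freezing approach is that it needs no regularity of $\gamma$ in $\zeta$ beyond continuity, whereas your parametrized computation implicitly uses that $\p_{\zeta_j}\gamma$ and $\p_\theta\p_{\zeta_j}\gamma$ exist (and, for full smoothness of $I$, that $\gamma$ is $C^\infty$ in $\zeta$). The lemma as stated only calls $\gamma$ a ``piecewise $C^1$ contour'' and says nothing about its $\zeta$-regularity, so strictly speaking you are assuming a bit more than is written. In the applications of \S\ref{sec:5} the contour is a smoothly varying circle, so this is harmless in context, and your own closing remark already acknowledges this.
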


\subsection{Berry curvature of a family of matrices.}\label{sec:7.4}  Here we prove Lemma \ref{lem:2c} about the Berry curvature and Chern class of the low energy eigenbundle of the matrix 
\begin{equation}
\MM(\zeta) = \dfrac{1}{\delta} \left(\Mm_\delta\left(\pi+\dfrac{\delta \xi}{\nu_\star},t\right)-E_\star\right) = \matrice{\xi & \ove{\var(t)} \ \\ \var(t) & -\xi}.
\end{equation}

\begin{proof}[Proof of Lemma \ref{lem:2c}] 1. We use $\zeta =(\xi,t)$. We recall that $\MM(\zeta)$ has eigenvalues $\pm \sqrt{\xi^2 + |\var(t)|^2}$. The projector $\pi^-(\zeta)$ to the negative eigenspace of $\MM(\zeta)$ is 
\begin{equation}
\pi^-(\zeta) = \dfrac{1}{2\pi i} \int_{\gamma(\zeta)} \big( z -\MM(\zeta) \big)^{-1} dz.
\end{equation}
where $\gamma(\zeta)$ is the disk centered at $-\sqrt{\xi^2 + |\var(t)|^2}$, of radius $\var_0 = \min_{t\in [0,2\pi]} |\var(t)|$. Lemma \ref{lem:1a} shows that
\begin{equations}
\dd{\pi^-(\zeta)}{t} = \dfrac{1}{2\pi i} \int_{\gamma(\zeta)} \big( z -\MM(\zeta) \big)^{-1}   \matrice{0 & \ove{\var'(t)} \ \\ \var'(t) & 0}\big( z -\MM(\zeta) \big)^{-1}dz,
\\
\dd{\pi^-(\zeta)}{\xi} = \dfrac{1}{2\pi i} \int_{\gamma(\zeta)} \big( z -\MM(\zeta) \big)^{-1}   \matrice{1 & 0 \ \\ 0 & -1}\big( z -\MM(\zeta) \big)^{-1}dz.
\end{equations}
Since the eigenvalues of $\MM(\zeta)$ are at least $\var_0$-distant from $\gamma(\zeta)$, we can control $\p_t \pi^-(\zeta)$ and $\p_\xi \pi^-(\zeta)$ using the spectral theorem. It shows that uniformly in $\zeta \in \R \times [0,2\pi]$, $\p_t \pi^-(\zeta) = \OO_{\C^2}(1)$ and $\p_\xi \pi^-(\zeta) = \OO_{\C^2}(1)$.

2. We use the formula \cite[(23)]{FC} to compute the Berry curvature of the low-energy eigenbundle associated to $\MM$: 
\begin{equations}
\BB(\zeta) = 
\dfrac{1}{2i\big(\xi^2 + |\var(t)|^2\big)^{3/2}} \matrice{\Re \var(t) \\ \Im \var(t) \\ \xi} \cdot \left(  \p_\xi \matrice{\Re \var(t) \\ \Im \var(t) \\ \xi} \wedge \p_t \matrice{\Re \var(t) \\ \Im \var(t) \\ \xi} \right) 
\\
=
\dfrac{1}{2i\big(\xi^2 + |\var(t)|^2\big)^{3/2}} \matrice{\Re \var(t) \\ \Im \var(t) \\ \xi} \cdot \left( \matrice{0 \\ 0 \\ 1} \wedge \matrice{\Re \var'(t) \\ \Im \var'(t) \\ 0}   \right) 
\\
=  \dfrac{1}{2i\big(\xi^2 + |\var(t)|^2\big)^{3/2}} \matrice{\Re \var(t) \\ \Im \var(t) \\ \xi} \cdot \matrice{-\Im \var'(t) \\ \Re \var'(t) \\ 0} = \dfrac{1}{2i\big(\xi^2 + |\var(t)|^2\big)^{3/2}} \cdot \Im\big(\var(t)\overline{\var'(t)} \big).
\end{equations}
If we write $\var(t) = r(t)e^{i\vp(t)}$, this reduces to
\begin{equation}
\BB(\zeta) = \dfrac{i}{2\big(\xi^2 + r(t)^2\big)^{3/2}} \cdot r(t)^2 \vp'(t).
\end{equation}
Integrating over $\xi$ and $t$, we get 
\begin{equation}\label{eq:1g}
\dfrac{1}{2\pi i} \int_{\R \times \Ss^1} \BB(\zeta) d\zeta = \dfrac{1}{4\pi} \int_{\R}\dfrac{d\xi}{\big(\xi^2+r(t)^2\big)^{3/2}}  \cdot r(t)^2 \vp'(t) dt = \dfrac{1}{2\pi} \int_0^{2\pi} \vp'(t) dt.
\end{equation}
This completes the proof because the RHS of \eqref{eq:1g} is the winding number of $\var$.
\end{proof}

\end{document}